\documentclass[12pt]{amsart}
\usepackage{epsfig}
\usepackage{psfrag}
\usepackage{amsmath}
\usepackage{amssymb}
\usepackage{latexsym}
\usepackage{amscd}
\usepackage{amsfonts}
\usepackage{graphicx}

\usepackage{mathrsfs}
\usepackage{amsfonts}
\usepackage{amsopn}
\usepackage{amstext}
\usepackage{amscd}
\usepackage{amsthm}

\setlength{\oddsidemargin}{0pt} 
\setlength{\evensidemargin}{0pt}
\parindent = 1 true pc
\parskip = 10pt minus 1pt

\setlength{\textwidth}{6.3in}

\newtheorem{theorem}{Theorem}[section]

\newtheorem{definition}{Definition}[section]

\newtheorem{lemma}{Lemma}[section]

\newtheorem{proposition}{Proposition}[section]

\DeclareMathOperator{\conv}{conv}
\DeclareMathOperator{\expect}{E}
\DeclareMathOperator{\prob}{P}
\DeclareMathOperator{\variance}{VAR}

\DeclareMathOperator{\Ber}{Ber}
\DeclareMathOperator{\B}{B}
\DeclareMathOperator{\Prob}{P}

\DeclareMathOperator{\e}{e}

\DeclareMathOperator{\T}{T}
\DeclareMathOperator{\Tan}{T}
\DeclareMathOperator{\Nor}{N}

\DeclareMathOperator{\diff}{d}

\DeclareMathOperator{\Sphere}{S}
\DeclareMathOperator{\Span}{span}
\DeclareMathOperator{\Aff}{aff}
\DeclareMathOperator{\Normal}{N}

\DeclareMathOperator{\closure}{cl}

\newcommand{\R}{\mathbb R}
\newcommand{\N}{\mathbb N}

\newcommand{\comment}[1]{}

\newcommand{\Bigo}{\mathscr O}
\newcommand{\Cube}{Q}

\numberwithin{equation}{section}
\begin{document}
\title[Distribution of Optimally Aligned Letter Pairs]{Distribution of Aligned Letter Pairs in Optimal Alignments of Random Sequences}

\author[R.A.~Hauser]{Raphael Hauser} 
\address{Raphael Hauser, 
Mathematical Institute, University of Oxford, 24-29 St Giles', Oxford OX1 3LB, United Kingdom}
\email{hauser@maths.ox.ac.uk}
\thanks{Raphael Hauser was supported by the Engineering and Physical Sciences Research Council [grant number EP/H02686X/1]}
\author[H.F.~Matzinger]{Heinrich Matzinger}
\address{Heinrich Matzinger,
School of Mathematics, Georgia Institute of Technology, 686 Cherry Street, Atlanta, GA 30332-0160 USA}
\email{matzi@math.gatech.edu}
\thanks{Heinrich Matzinger was supported by the Engineering and Physical Sciences Research Council [grant number EP/I01893X/1], IMA Grant SGS29/11, and by Pembroke College Oxford}

\subjclass{Primary 60K35; Secondary 52A40, 60C05, 60D05, 65K10, 60F10, 62E20, 90C27}


\keywords{Sequence alignment, percolation theory, convex geometry, large deviations.}

\begin{abstract}Considering the optimal alignment of two i.i.d.\ random sequences of length $n$, we show that when the scoring function is chosen randomly, almost surely the empirical distribution of aligned letter pairs in all optimal alignments converges to a unique limiting distribution as $n$ tends to infinity. This result is interesting because it helps understanding the microscopic path structure of a special type of last passage percolation problem with correlated weights, an area of long-standing open problems. Characterizing the microscopic path structure yields furthermore a robust alternative to optimal alignment scores for testing the relatedness of genetic sequences.
\end{abstract}

\maketitle

\section{Introduction}

\subsection{Basic Definitions and Overview}\label{overview} Let $\mathcal{A}$ denote a finite alphabet, and let us consider alignments with gaps of two strings $x$ and $y$ of equal length $n$ consisting of letters from $\mathcal{A}$. For each such alignment $\pi$ we may count the number of letter pairs of different types aligned with each other and divide this number by $n$. Collecting these ratios for each possible pair of letters results in what we call the {\em empirical distribution vector} and denote by $\vec{p}_\pi(x,y)$. We remark that this results in the empirical probability distribution in the classical sense scaled by a factor $\tau\geq 1$ that is due to the presence of gaps. The classical distribution can of course be recovered by normalizing our notion of distribution. The set of all such empirical distribution vectors for given strings $x$ and $y$ will be denoted by 
\begin{equation*}
SET(x,y):=\left\{\vec{p}_\pi(x,y):\,\pi\text{ is an alignment with gaps of $x$ and $y$}\right\}.
\end{equation*}

{\footnotesize Let us give an example to illustrate the concepts we just introduced: Take $n=4$, consider the strings $x=\mathfrak{aabb}$ and $y=\mathfrak{abab}$, where the alphabet consists
of two letters $\mathcal{A}=\{\mathfrak{a,b}\}$, and let us look at a few alignments with gaps of $x$ and $y$. 
First, let $\pi$ be given by 
\begin{equation*}
\begin{array}{c||c|c|c|c|c}
x&\mathfrak{a}&\mathfrak{a}&\mathfrak{b}& &\mathfrak{b}\\\hline
y&\mathfrak{a}& &\mathfrak{b}&\mathfrak{a}&\mathfrak{b}
\end{array}
\end{equation*}

There is one $\mathfrak{a}$ aligned with $\mathfrak{a}$ and hence the coefficient
$p_{\mathfrak{aa}}=1/4$. Two pairs of letters $\mathfrak{b}$ are aligned with each other,
so $p_{\mathfrak{bb}}=2/4=0.5$. One letter $\mathfrak{a}$ from $x$ is aligned with a
gap, so that $p_{\mathfrak{aG}}=1/4$. Here and elsewhere we use the symbol $\mathfrak{G}$ for 
a gap. And finally, one letter $\mathfrak{a}$ from $y$ is aligned with a gap, so that
$p_{\mathfrak{Ga}}=1/4$. The empirical distribution vector of the alignment $\pi$ is now given by 
\begin{equation*}
\vec{p}_{\pi}(x,y)=\left(p_{\mathfrak{aa}},p_{\mathfrak{ab}},p_{\mathfrak{aG}},p_{\mathfrak{ba}},
p_{\mathfrak{bb}},p_{\mathfrak{bG}},p_{\mathfrak{Ga}},p_{\mathfrak{Gb}}\right)=
(0.25,0,0.25,0,0.5,0,0.25,0).
\end{equation*}

A second alignment $\nu$ is given by 
\begin{equation*}
\begin{array}{c||c|c|c|c}
x&\mathfrak{a}&\mathfrak{a}&\mathfrak{b}&\mathfrak{b}\\\hline
y&\mathfrak{a}&\mathfrak{b}&\mathfrak{a}&\mathfrak{b}
\end{array}
\end{equation*}
This time we find the following empirical distribution, 
\begin{equation*}
\vec{p}_\nu(x,y)=\left(p_{\mathfrak{aa}},p_{\mathfrak{ab}},p_{\mathfrak{aG}},
p_{\mathfrak{ba}},p_{\mathfrak{bb}},p_{\mathfrak{bG}},p_{\mathfrak{Ga}},p_{\mathfrak{Gb}}\right)=
(0.25,0.25,0,0.25,0.25,0,0,0).
\end{equation*}

Finally, consider the alignment $\mu$ given by 
\begin{equation*}
\begin{array}{c||c|c|c|c|c}
x&\mathfrak{a}&\mathfrak{a}&\mathfrak{b}& &\mathfrak{b}\\\hline
y&\mathfrak{a}&\mathfrak{b}& &\mathfrak{a}&\mathfrak{b}
\end{array},
\end{equation*}
The empirical distribution is given by 
\begin{equation*}
\vec{p}_\mu(x,y)=\left(p_{\mathfrak{aa}},p_{\mathfrak{ab}},p_{\mathfrak{aG}},
p_{\mathfrak{ba}},p_{\mathfrak{bb}},p_{\mathfrak{bG}},p_{\mathfrak{Ga}},p_{\mathfrak{Gb}}\right)=
(0.25,0.25,0,0,0.25,0.25,0.25,0).
\end{equation*}

Note that the coefficients of the empirical distribution vector do not usually add up to one. 
For example, in the case of alignment $\mu$, we get
\begin{equation*}
p_{\mathfrak{aa}}+p_{\mathfrak{ab}}+p_{\mathfrak{aG}}+p_{\mathfrak{ba}}+p_{\mathfrak{bb}}
+p_{\mathfrak{bG}}+p_{\mathfrak{Ga}}+p_{\mathfrak{Gb}}=1.25>1,
\end{equation*}
the reason being that we divided by 
the length $n$ of the strings instead of the number of columns of the alignment. If the alignment were without gaps, the coefficients would add up to one and represent frequencies, but in general this is not the case. However, the coefficients of the empirical distribution vector are proportional to the actual frequencies and hence may be thought of intuitively as representing these.}\\

Let us next consider two random strings $X=X_1\dots X_n$ and $Y=Y_1\dots Y_n$, where $X_i$ abd $Y_i$ are i.i.d.\ random variables taking values in the alphabet ${\mathcal A}$. Consider the set of all empirical distribution vectors that can be obtained from aligning $X$ with $Y$ and inserting the gaps in different places. We denote the convex hull of this set by 
$$SET^n:=\conv\left(SET(X, Y)\right).$$

One of our main results, Theorem \ref{convergence2SET}, will establish that 
\begin{equation*}
\lim_{n\rightarrow\infty}d(SET^n,SET)\stackrel{a.s.}{=}0,
\end{equation*}
where $SET$ is a unique limiting set that only depends on the distribution of the sequences $X$ and $Y$, 
but not their realization, and where $d(\cdot,\cdot)$ denotes the Hausdorff distance between two subsets in $\R^n$ and is defined in \eqref{set distance} below. 

Let $\mathcal{A}^*$ denote the alphabet $\mathcal{A}$ augmented by the symbol $\mathfrak{G}$, which stands for a gap, and consider functions $S$ from $\mathcal{A}^*\times\mathcal{A}^*$ into the set of real numbers. Such functions will be called {\em scoring functions}. For a gapped alignment $\pi$ of $x=x_1\dots x_n$ and  $y=y_1\dots y_n$, let us define the score $S_\pi(x,y)$ under a scoring function $S$ as the sum of scores of the aligned symbols pairs from $\mathcal{A}^*$. An alignment of $x$ and $y$ is called 
{\em optimal under $S$} if it maximizes $S_\pi(x,y)$ amongst all gapped alignments of $x$ and $y$.

Another main result of this paper, Theorem \ref{haupttheorem}, shows that when the scoring function $S$ is chosen at random, the empirical distribution of any optimal alignment of the random strings $X=X_1\dots X_n$ and $Y=Y_1\dots Y_n$ with respect to $S$ almost surely approaches a unique limiting vector $\vec{p}_S$ as $n$ tends to infinity. Apart from the realization of $S$, this limit vector only depends on the {\em distribution} of $X$ and $Y$, but not on their {\em realizations}. 

{\footnotesize Let us further illustrate the concept of {\em optimal alignment} of two strings by means of an example. Consider a scoring function $S$ that takes the value $1$ for identically aligned letters, and $0$ otherwise. In the context of the three examples of alignments introduced earlier, we find the following scores, 
\begin{align*}
S_\pi(x,y)&=S(\mathfrak{a,a})+S(\mathfrak{a,G})+S(\mathfrak{b,b})+S(\mathfrak{G,a})+S(\mathfrak{b,b})
=1+0+1+0+1=3,\\
S_\mu(x,y)&=S(\mathfrak{a,a})+S(\mathfrak{a,b})+S(\mathfrak{b,G})+S(\mathfrak{G,a})+S(\mathfrak{b,b})
=1+0+0+0+1=2,\\
S_\nu(x,y)&=S(\mathfrak{a,a})+S(\mathfrak{a,b})+S(\mathfrak{b,a})+S(\mathfrak{b,b})
=1+0+0+1=2.
\end{align*}
One could check that the score of $3$ is not exceeded for any alignment of $x$ and $y$ and hence find that $\pi$ maximizes the alignment score of $x$ and $y$ under $S$. We thus say that $\pi$ is an optimal alignment under $S$, although note that it may not be the unique alignment with this property. }

The alignment score can also be viewed as the value that an appropriately defined linear functional takes on the empirical distribution vector of an alignment: Let $f_S:\mathbb{R}^8\rightarrow \mathbb{R}$ be defined by
\begin{align}
f_S(\vec{p})&=f_S\left(p_{{\mathfrak aa}},p_{\mathfrak{ab}},p_{\mathfrak{aG}},p_{\mathfrak{ba}},p_{\mathfrak{bb}},p_{\mathfrak{bG}},
p_{\mathfrak{Ga}},p_{\mathfrak{Gb}}\right)
\label{defined in}\\
&=S(\mathfrak{a,a})p_{\mathfrak{aa}}+S(\mathfrak{a,b})p_{\mathfrak{ab}}
+S(\mathfrak{a,G})p_{\mathfrak{aG}}+S(\mathfrak{b,a})p_{\mathfrak{ba}}
+S(\mathfrak{b,b})p_{\mathfrak{bb}}\nonumber\\
&\hspace{2cm}+S(\mathfrak{b,G})p_{\mathfrak{bG}}+S(\mathfrak{G,a})p_{\mathfrak{Ga}}
+S(\mathfrak{G,b})p_{\mathfrak{G,b}}.\nonumber
\end{align}
It is then the case that 
\begin{equation}\label{f and S}
S_\pi(x,y)=n f_S\left(\vec{p}_\pi(x,y)\right)
\end{equation}
holds for any alignment $\pi$ of $x$ and $y$. 

Recall that $SET(x,y)$ is  defined as the set of all empirical distribution vectors of alignments of $x$ and $y$, 
\begin{equation*}
SET(x,y):=\left\{\vec{p}_\pi(x,y):\,\pi\text{ is an alignment of $x$ and $y$}\right\}.
\end{equation*}
In particular, in our current example $SET(x,y)=SET(\mathfrak{abab},\mathfrak{aabb})$ contains the vectors 
\begin{align*}
\vec{p}_\mu(x,y)=&
(0.25,0.25,0,0,0.25,0.25,0.25,0)\\
\vec{p}_\pi(x,y)=&
(0.25,0,0.25,0,0.5,0,0.25,0)\\
\vec{p}_\nu(x,y)=&
(0.25,0.25,0,0.25,0.25,0,0,0).
\end{align*}
Further, we write 
\begin{equation}\label{LS}
L_S(x,y):=\max_\pi S_{\pi}(x,y)
\end{equation}
for the optimal alignment score of $x$ and $y$, where the maximum is taken over all gapped alignments $\pi$ of $x$ with $y$. Thus, in our current example we have $L_S(\mathfrak{abab},\mathfrak{aabb})=3$. The rescaled maximum alignment score can also be seen as the maximum value taken by the functional $f_S$ over $SET(x,y)$, 
\begin{equation*}
\frac{L_S(x,y)}{n}= \max_{\vec{p}\in SET(x,y)}f_S(\vec{p}).
\end{equation*}

\subsection{Motivation}

\subsubsection{First passage Percolation} 
The problem of understanding the structure of an optimal path in first and last passage percolationhas was  recognized as being important several decades ago but still remains largely unresolved, see Kesten \cite{howard}. Consider the set of edges 
\begin{equation*}
E:=\left\{ \{(z,w),(z,w+1)\},\,\{(z,w),(z+1,w)\}:\,z,w\in\mathbb{Z}\right\}
\end{equation*}
of the integer lattice. $E$ thus consists of vertical and horizontal edges of unit length incident to points in $\R^2$ with integer coordinates. Consider a setup in which a random weight $w(e)$ is associated with each edge $e\in E$. In the classical setting of First Passage Percolation, these random weights are i.i.d.\ distributed, and a path of smallest total weight between two points $a$ and $b$ is sought. Any admissible path must consist of consecutive adjacent edges $e_1,e_2,\dots,e_n\in E$, and $e_1$ and $e_n$ must be incident to $a$ and $b$ respectively. The weights can also be interpreted as the time it takes to cross an edge, with the total weight $w(e_1)+w(e_2)+\dots+w(e_n)$ of the path corresponding to the passage time from $a$ to $b$ via the chosen path, and a minimum weight path corresponding to a fastest link between the two points. 

An example of an open problem relating to the microstructure of an optimal path in first passage percolation is the following (see Kesten \cite{howard}):  Consider the two points $a=(0,0)$ and $b=(0,n)$. What is the proportion of vertical and horizontal edges in an shortest path from the point $(0,0)$ to $(0,n)$, and does this proportion converge as $n$ goes to infinity? We shall now argue that these questions are closely related to the central problem of this paper, which is to find the limiting empirical distribution of the aligned letter pairs. Consider the set of oriented edges 
\begin{equation*}
E':=\left\{\left((z,w),(z,w+1)\right), \left((z,w),(z+1,w)\right), \left((z,w),(z+1,w+1)\right):\,z,w\in\mathbb{Z}\right\},
\end{equation*}
let $x=x_1\dots x_n$ and $y=y_1\dots y_n$ be strings of letters from the alphabet ${\mathcal A}$, let a scoring function $S:{\mathcal A}^*\times{\mathcal A}^*\rightarrow\R$ be given, and let the weight $w(e)$ of an edge of type $e=((z,w),(z+1,z+1))$ (a diagonal edge) be equal to the score obtained by aligning the letter $x_{z+1}$ with $y_{w+1}$, 
\begin{equation*}
w(e)=S\left(x_{z+1},y_{w+1}\right).
\end{equation*}
For edges of type $e=((z,w),(z+1,w))$ (horizontal edges), let the weight $w(e)$ be given by the score of aligning a gap with the letter $x_{z+1}$, 
\begin{equation*}
w(e)=S\left(x_{z+1},\mathfrak{G}\right).
\end{equation*}
Likewise, for vertical edges $e=((z,w),(z,w+1))$, let 
\begin{equation*}
w(e)=S\left(\mathfrak{G},y_{w+1}\right).
\end{equation*}
In this manner, the problem of aligning $x$ with $y$ in an optimally according to the scoring function $S$ becomes a Last  Passage Percolation problem. The optimal alignment score $S(x,y)$ equals the weight of the maximum weight path going from $(0,0)$ to $(n,n)$. An optimal path  $e_1e_2\dots e_m$, that is, a path of maximum total weight among those that follow oriented edges from $E'$ and link $(0,0)$ to $(n,n)$, defines an optimal alignment of $x$ with $y$ in the following fashion: For any diagonal edge $((z,w),(z+1,w+1))$ that lies along the path, align the letter $x_{z+1}$ with $y_{w+1}$. align all other letters with gaps. Now note that when we know the limit of the  empirical distribution vector of the aligned letter pairs, we also know the proportion of gaps on the long run. In other words, the limiting distribution of the aligned letter pairs yields the asymptotic  proportion of horizontal and vertical edges in the optimal path in $E'$. This information is the equivalent to  knowing the asymptotic proportion of vertical and horizontal edges in last passage percolation in a model where the distribution of edge weights is different, and where diagonal edges are present. The corresponding results for first passage percolation can be obtained by multiplying the edge weights by $-1$. 

\subsubsection{Computational Genomics} 
In computational genomics the alignment score is a maximum likelihood ratio to decide which alignment is the most likely association of sequences that diverged by evolution. Any gapped alignment of
two DNA or RNA strings $x=x_1\dots x_n$ and $y=y_1\dots y_n$ represents a hypothesis about the evolutionary history of the two species relative to one another. Assume that the strings $x$ and $y$ are sections of genetic sequences from two extant species with a common ancestor. Aligning $x_i$ with $y_j$ corresponds to the hypothesis that both letters descended from a particular letter in the corresponding section in the ancestor's genome. A letter aligned with a gap may correspond to a letter in the ancestral genome that disappeared in one of the two descendants or a new letter that appeared through mutation. The value $S(\mathfrak{a,b})$ of the scoring function at $\mathfrak{a,b}\in\mathcal{A}^*$ is equal to the logarithm of the probability that a letter from the ancestral genome evolved into a letter $\mathfrak{a}$ in one of the extant species and into a letter $\mathfrak{b}$ in the other, assuming that letters mutate independently of their neighbors\footnote{This is of course an inexact approximation of true mutation dynamics.}. Naturally, the scoring function depends on how long ago the two species got separated in the evolutionary tree. Given more time since separation, the probability of mutation increases, and as a result the scoring function also will look different.

In practice it can sometimes be difficult to determine whether two given sequences are related or not, since a good choice of the scoring function $S$ may not be known a priory in absence of a good estimate of the time since evolutionary divergence between the two extant species. Sections of DNA-sequences might also look similar because they have a similar distribution of amino acids rather than being related by direct evolution from a particular section of the ancestral genome. An approach to overcoming this problem is to relate sequences by the microscopic path structure of optimal alignments. Hirmo, Lember and Matzinger \cite{caseStudy} found that optimal alignments of related and non-related sequences have entirely different microscopic structures. Preliminary experiments showed that basing a relatedness test on path structure works at least as well as the widely used test based on the BLASTZ algorithm, which works with a sophisticated scoring function. The results of our paper will help to take this work further.

\subsection{Monte Carlo Simulation} 
An appealing approach to the investigation of asymptotic qualities of optimal alignments of random sequences is to use Monte Carlo simulation. However, to derive rigorous bounds one usually needs to know the variance $\variance(L_n(S))$ of the alignment score as a function of the length $n$ of the aligned random sequences $X_1\dots X_n$ and $Y_1\dots Y_n$, and this order of fluctuation is not yet well understood.  In the special case of the longest common subsequence problem with sequences consisting of i.i.d.\ $\Ber(p)$ variables, Chvatal and Sankoff \cite{Sankoff1} conjectured that $\variance(L_n(S))=o(n^{2/3})$ when $p=0.5$. Steele \cite{Steele86} later proved the bound $\variance(L_n(S))\leq 2p(1-p)n$. Waterman \cite{Waterman-estimation} asked the question of whether this bound can be improved and found that for $p<0.5$, simulations suggest that the dependence of $\variance(L_n(S))$ on $n$ is linear. Boutet de Monvel \cite{boutet} found that this also applies to the case $p=0.5$, although the linear growth only sets in for very large $n$. Lember and Matzinger \cite{VARTheta} gave a rigorous proof of the linear order $\variance(L_n(S))=\Theta(n)$ in the case where $p$ is very small. Their analysis was based on showing that the manipulation of randomly selecting a letter of specified type from one of the two sequences and changing it into another specified type has a positive biased effect on the optimal alignment score. In Section \ref{fluctuation}, we significantly extend the applicability of this result to general scoring functions and random sequences whose distributions are not highly asymmetric: Theorem \ref{matz2} yields a sufficient criterion under which the asymptotic order of fluctuation $\variance(L_n(S))=\Theta(n)$ holds.

\subsection{Summary of Main Results}
We now amend the notation introduced in \eqref{LS} and write 
\begin{equation}\label{new old}
L_n(S)=\max_{\pi} S_{\pi}(X,Y)
\end{equation}
for the optimal alignment score of the two i.i.d.\ strings $X=X_1,X_2,\dots,X_n$ and $Y=Y_1,Y_2,\dots Y_n$ with letters from the alphabet $\mathcal{A}$. Since $X$ and $Y$ are random, the maximum score $L_n(S)$ is a random variable, and we are interested in its dependence on the scoring function $S:{\mathcal A}^*\times{\mathcal A}^*\rightarrow\R$ in the asymptotic regime where the length $n$ of the strings tends to infinity. Let $\lambda_n(S)$ denote the rescaled expected alignment score, 
\begin{equation}\label{lambda S}
\lambda_n(S):=\frac{E[L_n(S)]}{n}.
\end{equation}
A simple subadditivity argument, see Chv\`atal \& Sankoff \cite{Sankoff1}, shows that 
\begin{align}
\lambda_n(S)&\leq\lambda_m(S),\quad\forall\,n\leq m\in\N,\label{increasing}\\
\lambda(S)&:=\lim_{n\rightarrow\infty}\frac{\expect\left[L_n(S)\right]}{n}\text{ exists},\label{chvatal}\\
\prob&\left[\lim_{n\rightarrow\infty}\frac{L_n(S)}{n}=\lambda(S)\right]=1.\label{convergence of lambda}
\end{align}
Furthermore, Alexander \cite{Alexander} showed that in the case of the longest common subsequence problem, the convergence is slower than the order $\sqrt{\ln n}/\sqrt{n}$, 
\begin{equation*}
\lambda(S)-\lambda_n(S)\geq C\frac{\sqrt{\ln n}}{\sqrt{n}}. 
\end{equation*}
In Lemma \ref{rateoflambdan} we show that a lower bound of the same order also exists. 
The exact value of the Chv\`atal-Sankoff constant $\lambda(S)$ is unknown even in the simplest cases, and Montecarlo simulations to obtain estimates of even moderate accuracy are quite involved \cite{martinezlcs, lcscurve}. Note that $\lambda(S)$ depends of course on the distribution of the random strings.

Let $H(S)$ designate the half space
\begin{equation}\label{H}
H(S):=\left\{\vec{x}\in\R^{|{\mathcal A}^*|^2}:\,f_S(\vec{x})\leq\lambda(S)\right\},
\end{equation}
where $f_S$ is defined by 
\begin{align*}
f_S:\R^{|{\mathcal A}^*|^2}&\rightarrow\R,\\
\vec{x}&\mapsto\sum_{\mathfrak{a}\in{\mathcal A}^*}\sum_{\mathfrak{b}\in{\mathcal A}^*}
S(\mathfrak{a,b})x_{\mathfrak{ab}}. 
\end{align*}
We also consider 
\begin{equation}\label{SET}
SET:=\cap_S H(S),
\end{equation}
where the intersection is taken over all scoring functions $S$. It is immediate from its definition that $SET$ is a closed convex set, and in Lemma \ref{lala} we will furthermore show that it is compact with nonempty interior. Recall also the notation 
\begin{equation*}
SET(X,Y):=\left\{\vec{p}_\pi(X,Y):\,\pi\text{ is an alignment with gaps of $X$ and $Y$}\right\}
\subset\R^{|{\mathcal A}^*|^2}
\end{equation*}
introduced earlier for the set of empirical distribution vectors for gapped alignments of $X$ and $Y$. Since $X$ and $Y$ are random strings, $SET(X,Y)$ is a random set. We denote the convex hull of this set by 
\begin{equation}\label{SET^n}
SET^n:=\conv\left(SET(X,Y)\right),
\end{equation}
where we account for the length $n$ of the random strings $X$ and $Y$ notationally because we are interested in the asymptotic behavior when $n$ tends to infinity. 

The Hausdorff distance \cite{hausdorff} between two sets $A,B\subset\R^n$ is defined as follows, 
where $\|\cdot\|$ denotes the Euclidean norm, 
\begin{align}
d(A,B)&=\max\left(\sup_{x\in A}d(x,B), \sup_{y\in B}d(A,y)\right)\label{set distance}\\
d(x,B)&=\inf\left\{\|x-y\|:\,y\in B\right\},\nonumber\\
d(A,y)&=\inf\left\{\|x-y\|:\,x\in A\right\}.\nonumber
\end{align}

Let us now discuss the main results of this paper: 
\begin{enumerate}
\item Theorem \ref{convergence2SET} will establish that the random set $SET^n$ almost surely converges to the deterministic set $SET$ in terms of Hausdorff distance. As a consequence, if one were to simulate the sequences $X_1\dots X_n$ and $Y_1\dots Y_n$ and compute the convex hull of the empirical distribution vectors of all their alignments with gaps, one would find a set that closely resembles $SET$, provided $n$ is large enough.
\item Theorem \ref{convergencepoint} will show that the empirical distributions of all optimal alignments of $X$ and $Y$ almost surely converge to a deterministic distribution as $n$ tends to infinity, on condition that the scoring function $S$ be chosen such that $f_S$ has a unique maximizer in $SET$. When this condition is met, we denote the unique maximiser by $\vec{p}_S$. The statement of the theorem then says that the probability that there exists an optimal alignment of $X$ and $Y$ with respect to $S$ with empirical distribution further away than $\epsilon>0$ from $\vec{p}_S$ is negatively exponentially small in $n$, where $\epsilon$ is an arbitrary small constant independent of $n$.
\item The condition of Theorem \ref{convergencepoint} is difficult to verify in practice, but Theorem \ref{convex3} shows that when the scoring function $S$ is chosen randomly, then the condition is almost surely met, that is, $f_S$ has a unique maximizer in $SET$ with probability $1$. As a corollary, we obtain Theorem \ref{haupttheorem}, which says that for almost all scoring function $S$ the empirical distributions of all optimal alignments of $X$ and $Y$ almost surely converge to a deterministic distribution. 
\item Theorem  \ref{haupttheorem} allows for the derivation of a sufficient criterion to guarantee that the order of fluctuation of the optimal alignment score is linear in the length $n$ of the aligned random strings $X_1\dots X_n$ and $Y_1\dots Y_n$. The sufficient criterion depends on the scoring function $S$ and on the unique maximizer $\vec{p}_S$ of $f_S$ over $SET$ and constitutes a practical tool 
in the design of a statistical test on the order of fluctuation of the optimal score, see \cite{lcsMontecarlo}. 
\end{enumerate}

\subsection{A Few Key Ideas} 
Let $S:{\mathcal A}^*\times{\mathcal A}^*\rightarrow\R$ be a scoring function, $x$ and $y$ two strings of length $n$ with letters from ${\mathcal A}$, and $\pi$ a gapped alignment of $x$ and $y$. By \eqref{f and S}, the optimal alignment score of $x$ and $y$ with respect to $S$ satisfies 
\begin{equation*}
\max_{\pi}\,\frac{S_{\pi}(x,y)}{n}=\max_{\vec{p}\in SET(x,y)}f_S(\vec{p}). 
\end{equation*}
Applied to the random strings $X=X_1\dots X_n$ and $Y=Y_1\dots Y_n$, we find 
\begin{equation}\label{preAAA}
\frac{L_n(S)}{n}=\max_{\vec{p}\in SET(X,Y)}f_S\left(\vec{p}\right).
\end{equation}
A crucial observation is now that the maximum of a linear function over a closed set in $\R^n$ equals the maximum of the given function over the convex hull of the given set. Since $SET^n=\conv\left(SET(X,Y)\right)$, \eqref{preAAA} implies
\begin{equation}\label{AAA}
\frac{L_n(S)}{n}=\max_{\vec{p}\in SET^n}f_S\left(\vec{p}\right).
\end{equation}
By \eqref{convergence of lambda}, $L_n(S)/n$ almost surely converges to a deterministic constant $\lambda(S)$ which was also used to define $SET$, see \eqref{H} and \eqref{SET}. The definition of $SET$ shows further that 
\begin{equation}\label{inequality}
\max_{\vec{p}\in SET}f_S\left(\vec{p}\right)\leq\lambda(S),
\end{equation}
and Lemma \ref{lala} d), proven below, shows that the inequality in \eqref{inequality} holds in fact at equality. Combined with \eqref{AAA}, this implies 
\begin{equation}\label{dual convergence}
\max_{\vec{p}\in SET^n} f_S\left(\vec{p}\right)\stackrel{n\rightarrow\infty}{\longrightarrow}
\max_{\vec{p}\in SET} f_S\left(\vec{p}\right)\quad\text{almost surely.}
\end{equation}

At a first pass it is illustrative to consider an approximate proof of Theorem \ref{haupttheorem} that is free of technical details relating to large deviations that will be necessary to render the proof rigorous. Proposition \ref{convex2}  and Theorems \ref{convex1} and \ref{convex3}, which will be proven in Section \ref{convex geometry}, provide the crucial insight: by \eqref{dual convergence}, the conditions of Theorem \ref{convex1} are approximately met for $C=SET$ and $C^n=SET^n$, $n\in\N$, and hence, it is plausible to argue that $SET^n\rightarrow SET$. We remark that in rendering this argument rigorous later, we will use the fact that $L_n(S)/n$ converges to $\lambda(S)$ at a rate on the order of $\ln(n)/\sqrt{n}$, which follows directly from the Azuma-Hoeffding Inequality, as we shall see in Section \ref{appendix}. The convergence of $SET^n$ to $SET$ occurs thus at the same rate. Choosing the scoring function $S$ at random is tantamount to choosing $f_S$ randomly, whence Theorem \ref{convex3} shows that the conditions of Proposition \ref{convex2} are satisfied. Theorem \ref{haupttheorem} thus follows.

\subsection{Some Key Difficulties}
Consider the optimal alignment score $L_n(S)$ for some scoring function $S$. $L_n(S)$ is a random variable, because it depends on the realization of the i.i.d.\ random strings $X=X_1\dots X_n$ and $Y=Y_1\dots Y_n$. 
However, it is easy to see that changing the realization of only one of these variables results in a change of $L_n(S)$ by at most the deterministic constant 
\begin{equation*}
\max_{c,d,e\in\mathcal{A}^*}|S(c,d)-S(c,e)|. 
\end{equation*}
See Lemma \ref{change} for details. 
One can therefore apply the Azuma-Hoeffding Inequality to find that, on a scale of $\sqrt{n}$, the tail of $L_n(S)$ decays at least quadratically exponentially fast, see Lemma \ref{Azuma}.  This powerful tool lends itself to an elegant analysis of the asymptotic convergence of the alignment score and its fluctuation. 

In contrast, analyzing the convergence of the empirical distribution of letter pairs in optimal alignments is much harder: upon changing the realization of one of the random letters, it has to be assumed a priori that the entire optimal alignment and hence the relative frequencies at which alignments occur have changed. As a consequence, the Azuma-Hoeffding Inequality cannot be applied directly. Luckily, it can be applied indirectly through the optimal alignment scores of different scoring functions at the cost of having to deal with additional technicalities.

A further key difficulty is that for the scoring functions $S$ under consideration, it is required that $f_S$ be maximized in only one point on $SET$. This condition would be met if $SET$ was known to be strictly convex everywhere, but this seems very difficult to verify in practice, since the exact shape of $SET$ is unknown: $SET$ corresponds to the asymptotic shape of the wet zone in the first/last passage percolation formualtion of our problem, and determining the shape of the corresponding zone in standard first passage percolation is a long-standing open problem in the general case. We get around this problem by showing that if the scoring function $S$ is chosen at random, then with probability one there exists a unique maximizer of $f_S$ on 
on $SET$, see Theorem \ref{convex3}. 

{\footnotesize We remark that convergence of the empirical distribution of aligned letter pairs may not hold when $S$ is not chosen randomly. For a counter-example, construct a scoring function that takes the value $1$ for pairs of identical letters, $0$ otherwise, and take ${\mathcal A}=\{0,1\}$ and $X_i$, $Y_j$ i.i.d.\ Bernoulli $\Ber(1/2)$ variables. The optimal alignment score then corresponds to the length of the longest common subsequence (LCS), and the asymptotic empirical distribution of optimally aligned letters is not unique: Write out the optimally aligned sequences with the introduced gaps and subdivide them into sections of length $3$, e.g., 
\begin{equation*}
\begin{smallmatrix} 1&0&0\\ 1&\mathfrak{G}&\mathfrak{G}\end{smallmatrix}
\Big|
\begin{smallmatrix} 0&1&\mathfrak{G}\\ 0&1&1 \end{smallmatrix}
\Big|
\begin{smallmatrix} \mathfrak{G}&0&0\\ 1&\mathfrak{G}&0 \end{smallmatrix}
\Big|
\begin{smallmatrix} 1&1&0\\ \mathfrak{G}&\mathfrak{G}&\mathfrak{G} \end{smallmatrix}
\Big|
\begin{smallmatrix} 0&0&0\\ 0&0&0 \end{smallmatrix}
\Big|
\dots
\end{equation*}
One observes empirically that a positive proportion of triplets is of the form 
\begin{equation*}
\begin{smallmatrix} 0&1&\mathfrak{G}\\ \mathfrak{G}&1&0 \end{smallmatrix}, 
\begin{smallmatrix} \mathfrak{G}&0&1\\ 1&0&\mathfrak{G} \end{smallmatrix},
\begin{smallmatrix} \mathfrak{G}&1&0\\ 0&1&\mathfrak{G} \end{smallmatrix} 
\text{ or }
\begin{smallmatrix} 1&0&\mathfrak{G}\\ \mathfrak{G}&0&1 \end{smallmatrix}. 
\end{equation*}
The first two correspond to the pattern $\begin{smallmatrix}0&1\end{smallmatrix}$ in $X$ being aligned with the pattern $\begin{smallmatrix}1&0\end{smallmatrix}$ in $Y$, and the last two to the inverted situation. Thus, the first triplet can be exchanged for the second, and the third for the fourth without affecting $L_n(S)$ nor the optimality of the alignment. The empirical distribution of optimally aligned letter pairs changes however, as weight is shifted from the pairing $(1,1)$ to the pairing $(0,0)$.} 

\section{Set Convergence of Empirical Distributions}\label{set convergence}

\begin{theorem}\label{convergence2SET}
Let $SET$ and $SET^n$ be the sets defined in \eqref{SET} and \eqref{SET^n}, and let $d$ denote the Hausdorff distance defined in \eqref{set distance}.  Then 
\begin{equation}
\label{raphael}
\prob\left[d(SET^n,SET)\stackrel{n\rightarrow\infty}{\longrightarrow}0\right]=1. 
\end{equation}
\end{theorem}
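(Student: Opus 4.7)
The plan hinges on the classical equivalence between Hausdorff convergence of compact convex sets and uniform convergence of their support functions, applied to the pointwise convergence supplied by \eqref{dual convergence}. This is the content that Theorem \ref{convex1} will formalize, as signalled in the ``Key Ideas'' discussion.

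Identifying each scoring function $S$ with the vector $v_S\in\R^{|\mathcal{A}^*|^2}$ whose components are $S(\mathfrak{a},\mathfrak{b})$, so that $f_S(\vec{p})=\langle v_S,\vec{p}\rangle$, the support functions of $SET^n$ and $SET$ at direction $v_S$ are
\begin{equation*}
h_{SET^n}(v_S)=\max_{\vec{p}\in SET^n}f_S(\vec{p})=\frac{L_n(S)}{n},\qquad h_{SET}(v_S)=\max_{\vec{p}\in SET}f_S(\vec{p})=\lambda(S),
\end{equation*}
by \eqref{AAA} and Lemma \ref{lala} d), respectively. Hence \eqref{convergence of lambda} yields, for each fixed $S$, almost-sure convergence $h_{SET^n}(v_S)\to h_{SET}(v_S)$.

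To upgrade this to uniform convergence on a single event of full probability, I would fix a countable dense subset $\mathcal{D}\subset\R^{|\mathcal{A}^*|^2}$ (say scoring functions with rational coefficients) and intersect the full-measure events provided by \eqref{convergence of lambda} as $S$ ranges over $\mathcal{D}$. Since the empirical distribution vectors $\vec{p}_\pi(X,Y)$ have nonnegative coordinates summing to at most the deterministic constant $\tau$, both $SET^n$ and $SET$ sit inside a common compact box, so the support functions in the family $\{h_{SET^n}\}\cup\{h_{SET}\}$ are sublinear and Lipschitz with a common deterministic constant. Equicontinuity then turns pointwise convergence on $\mathcal{D}$ into uniform convergence on the unit sphere, and applying Theorem \ref{convex1} yields $d(SET^n,SET)\to 0$ almost surely, establishing \eqref{raphael}.

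The main obstacle I anticipate is the orchestration between the various almost-sure statements involved: \eqref{convergence of lambda} is a single pointwise almost-sure convergence per direction, and promoting it to a single event on which set convergence holds requires both the countable-union trick above and a deterministic a priori bound on the diameter of $SET^n$ so that equicontinuity is available. The identification $\max_{SET}f_S=\lambda(S)$ of Lemma \ref{lala} d) is equally essential, since without it \eqref{convergence of lambda} would only deliver the weak information $h_{SET^n}(v_S)\to\lambda(S)$ together with $h_{SET}(v_S)\leq\lambda(S)$, which does not pin down $h_{SET}$ and so cannot force set convergence.
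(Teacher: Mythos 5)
Your proof is correct, and it takes a genuinely different route from the paper. The paper proves \eqref{raphael} by splitting the Hausdorff distance into the two one-sided quantities $\max_{\vec{x}\in SET^n}d(\vec{x},SET)$ and $\max_{\vec{x}\in SET}d(\vec{x},SET^n)$ and treating them separately: for the outer inclusion it uses Lemma \ref{finitehalfspace} to pick finitely many supporting half-spaces within $\epsilon$ of $SET$ and then Azuma--Hoeffding (via Theorem \ref{Azuma etc}) plus Borel--Cantelli to show $SET^n$ eventually stays in a $\ln n/\sqrt n$-thickening of their intersection; for the inner inclusion it uses Theorem \ref{222} to pick finitely many points of strict curvature $\vec{x}_i$, pins each of them down locally with a finite system of linear inequalities via Proposition \ref{lala d)}, and again uses Azuma plus Borel--Cantelli to produce points of $SET^n$ inside each such local trap. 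Your argument instead exploits the classical identity between Hausdorff distance of compact convex bodies and the sup norm of the difference of their support functions: the identification $h_{SET^n}(v_S)=L_n(S)/n$ from \eqref{AAA} and $h_{SET}(v_S)=\lambda(S)$ from Lemma \ref{lala} d), the countable-dense-direction trick to get a single event of full measure, and the uniform Lipschitz bound on support functions (from the common a priori bound $\sum p_{\mathfrak{ab}}\leq\tau$ on all empirical vectors, together with compactness of $SET$ from Lemma \ref{lala} b) to upgrade pointwise a.s.\ convergence to uniform convergence, after which Theorem \ref{convex1} (or, more directly, the support-function characterization of Hausdorff distance) finishes. The tradeoff is that the paper's approach is quantitative --- by propagating the Azuma rate it reveals that $SET^n\to SET$ at rate $\Bigo(\ln n/\sqrt{n})$, a fact the authors remark on and reuse --- whereas your argument only uses the qualitative almost-sure convergence \eqref{convergence of lambda} and therefore gives no rate, but is shorter and makes the role of Lemma \ref{lala} d) transparent. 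A small stylistic note: once equicontinuity has delivered uniform convergence of support functions over $\Sphere^{|\mathcal{A}^*|^2-1}$, the conclusion is immediate from the Hausdorff-metric formula for convex bodies, so the final appeal to Theorem \ref{convex1} is not strictly necessary.
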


\begin{proof}
By the definition of $d$ in \eqref{set distance}, we need to prove the two identities 
\begin{align}
\prob\left[\max_{\vec{x}\in SET^n}d(\vec{x},SET)\stackrel{n\rightarrow\infty}{\longrightarrow}0\right]&=1,
\label{Iconvergence}\\
\prob\left[\max_{\vec{x}\in SET}d(\vec{x},SET^n)\stackrel{n\rightarrow\infty}{\longrightarrow}0\right]&=1.
\label{IIconvergence}
\end{align}

To prove Equation \eqref{Iconvergence}, we use Lemma \ref{finitehalfspace} which establishes that, given $\epsilon>0$, there exist finitely many scoring functions $S_1,\dots,S_k$ such that 
\begin{equation*}
\max_{\vec{x}\in\bigcap_{i=1}^k H(S_i)}\,d\left(\vec{x}, SET\right)\leq\epsilon,
\end{equation*}
where the half-spaces 
\begin{equation*}
H\left(S_i\right):=\left\{\vec{x}\in\R^{|{\mathcal A}^*|^2}:\,f_{S_i}(\vec{x})\leq\lambda(S_i)\right\},
\end{equation*}
are defined as in Equation \eqref{H}. Let $H_n^+(S_i)$ denote the shifted half-space 
\begin{equation*}
H_n^+\left(S_i\right)=\left\{\vec{x}:\, f_{S_i}\left(\vec{x}\right)\leq\lambda_n\left(S_i\right)+\frac{\ln(n)}{\sqrt{n}}\right\},
\end{equation*} 
and let us define the event 
\begin{equation*}
{\mathscr A}_n\left(S_i\right)=\left\{\omega\in\Omega:\,SET^n(\omega)\subset H_n^+\left(S_i\right)\right\},
\end{equation*}
where $\Omega$ is the probability space over which the random sequences $X$ and $Y$ are defined. Corollary \ref{Azuma etc} and its proof show that the Azuma-Hoeffding Inequality implies  
\begin{equation*}
\prob\left[{\mathscr A}_n\left(S_i\right)^c\right]\leq n^{-c_{S_i}\ln n}, 
\end{equation*}
where $c_{S_i}>0$ is a constant that does not depend on $n$, see Theorem \ref{Azuma etc}. It follows that 
\begin{equation*}
\prob\left[SET^n\subset\bigcap_{i=1}^k H_n^+(S_i)\right]\geq 
1-\sum_{i=1}^k n^{-c_{S_i}\ln n}\geq 1-n^{-c\ln n}, 
\end{equation*}
where $c>0$ is a constant independent of $n$. The series $\sum_{n}n^{-c\ln n}$ being convergent, the Borel-Cantelli Lemma implies that almost surely there exists $n_0\in\N$ such that 
\begin{equation*}
SET^n\subset \cap_{i=1}^k H_n^+(S_i),\quad\forall\,n\geq n_0.
\end{equation*}
By the definition of the $S_i$, this implies that for all $n\geq n_0$, we have 
\begin{equation}\label{yoyo}
\max_{\vec{x}\in SET^n}d(\vec{x},SET)\leq C\times\frac{\ln n}{\sqrt{n}}+\epsilon
\end{equation}
where $C>0$ is a constant independent of $n$. This implies that 
\begin{equation*}
\prob\left[\limsup_{n\rightarrow\infty}\max_{\vec{x}\in SET^n}d\left(\vec{x},SET\right)\leq\epsilon\right]=1,\quad\forall\,\epsilon>0.
\end{equation*}
Finally, since this is true for all $\epsilon$ rational, Equation \eqref{Iconvergence} follows. 

To prove Equation \eqref{IIconvergence}, we employ Theorem \ref{222} that establishes that for any given $\epsilon>0$, there exist points $\vec{x}_1,\dots,\vec{x}_k\in SET_{SE}$, a certain subset\footnote{See Section \ref{convex geometry} for the relevant theory.} of the set of extreme points of $SET$,  and chosen such that 
\begin{equation*}
d\left(\vec{x},\conv\left(\vec{x}_1,\dots,\vec{x}_k\right)\right)\leq\epsilon,\quad\forall\,\vec{x}\in SET. 
\end{equation*}
We now claim that for each $\vec{x}_i$ there almost surely exists a sequence of points $\vec{x}_{1,i},\vec{x}_{2,i},\vec{x}_{3,i},\dots$ such that $\vec{x}_{j,i}\in SET^j$ for all $j\in\N$ and 
\begin{equation*}
\limsup_{j\rightarrow \infty}\left\|\vec{x}_{j,i}-\vec{x}_i\right\|\leq\epsilon.
\end{equation*}
By the triangular inequality, our claim implies that almost surely it is the case that 
\begin{equation*}
\limsup_{n\rightarrow\infty} \max_{\vec{x}\in SET}d(\vec{x},SET^n)\leq 2\epsilon,\quad
\forall\,\epsilon>0, 
\end{equation*}
and since this is true for all $\epsilon$ rational, Equation \eqref{IIconvergence} follows. 

It remains to prove our claim. Proposition \ref{lala d)} shows that there exist scoring functions $S_{\vec{x}_i}$ and $S_1,\dots,S_{\ell}$ such that 
\begin{equation*}
\vec{x}_i\in C_i:=\left\{\vec{x}:\,f_{S_{\vec{x}_i}}\left(\vec{x}\right)\geq\lambda\left(S_{\vec{x}_i}\right),\, f_{S_j}\left(\vec{x}\right)\leq\lambda\left(S_j\right), (j=1,\dots,\ell)\right\}\subset\B_{\epsilon}\left(\vec{x}_i\right)
\end{equation*}
and $C_{n,i}$ compact for all $n\in\N$, where 
\begin{equation*}
C_{n,i}:=\left\{\vec{x}:\,f_{S_{\vec{x}_i}}\left(\vec{x}\right)\geq\lambda_n\left(S_{\vec{x}_i}\right)-\frac{\ln n}{\sqrt{n}},\, f_{S_j}\left(\vec{x}\right)\leq\lambda\left(S_j\right)+\frac{\ln n}{\sqrt{n}}, (j=1,\dots,\ell)\right\}.
\end{equation*}
Further, by \eqref{increasing}, the sets $C_{n,i}$ are nested, and by compactness and \eqref{chvatal}, we have 
\begin{equation}\label{dxi2}
\limsup_{n\rightarrow\infty}\,d\left(\vec{x}_i,C_{n,i}\right)\leq\epsilon. 
\end{equation}

We will now show that with high probability $C_{n,i}$ has a nonempty intersection with $SET^n$. 
Consider the events 
\begin{align*}
{{\mathscr B}}_{n,j}&:=\left\{\omega\in\Omega:\,\frac{L_n\left(S_j\right)}{n}\leq\lambda\left(S_j\right)+\frac{\ln n}{\sqrt{n}}\right\},\\
{\mathscr C}_{n,i}&:=\left\{\omega\in\Omega:\,\exists\,\vec{x}\in SET^n\text{ s.t. }f_{S_{\vec{x}_i}}\left(\vec{x}\right)\geq\lambda_n\left(S_{\vec{x}_i}\right)-\frac{\ln n}{\sqrt{n}}\right\}.
\end{align*}

By Theorem \ref{Azuma etc}, we have 
\begin{equation*}
\prob\left[{\mathscr B}_{n,j}^c\right]\leq n^{-K_{j}\ln n}
\end{equation*}
where $K_{j}>0$ is a constant that does not depend on $n$. Note also that Equation \eqref{AAA} implies 
\begin{equation*}
{\mathscr B}_{n,j}=\left\{\omega\in\Omega:\,f_{S_j}\left(\vec{x}\right)\leq\lambda\left(S_j\right)+\frac{\ln n}{\sqrt{n}},\,\forall\,\vec{x}\in SET^n\right\}. 
\end{equation*}
In conjunction with \eqref{increasing}, Theorem \ref{Azuma etc} further implies 
\begin{equation*}
\prob\left[{\mathscr C}_{n,i}^c\right]\leq n^{-\ln(n)}.
\end{equation*}
But note that when the events ${\mathscr C}_{n,i}$ and ${\mathscr B}_{n,1},\dots{\mathscr B}_{n,\ell}$ occur jointly, then $SET^n\cap C_{n,i}\neq\emptyset$ holds. The probability that the intersection is empty is thus bounded from above by 
\begin{equation*}
P\left[{\mathscr C}_{n,i}^c\right]+\sum_{i=1}^{\ell}\prob\left[{\mathscr B}_{n,i}^c\right]\leq (\ell+1)n^{-K\ln n},
\end{equation*}
where $K>0$ is a constant that does not depend on $n$. 

In view of the fact that the series 
\begin{equation*}
\sum_{n=1}^\infty(\ell+1)n^{-K\ln n}
\end{equation*}
converges, the Borel-Cantelli Lemma now implies that, almost surely, for all but a finite number of $n\in\N$ there  exists $x_{n,i}\in SET^n\cap C_{n,i}$. In the finitely many cases where $SET^n\cap C_{n,i}=\emptyset$ we can pick an arbitrary point $x_{n,i}\in SET^n$ to complete the sequence. In view of \eqref{dxi2}, we thus find that almost surely it is possible to construct a sequence $(x_{n,i})_{n\in\N}$ with the claimed properties. Hence, this settles the theorem.
\end{proof}

\section{Point Convergence}\label{point convergence}

So far we established that the empirical distributions of optimal alignments of random sequences under any  scoring function asymptotically lie in $SET$. We will now show that for a fixed, randomly chosen scoring function $S$, the empirical distributions of all optimal alignments of $X$ and $Y$ under $S$ converge to a unique point in $SET$. Recall the notation $\vec{p}_{\pi}(x,y)$ introduced in Section \ref{overview}, and let us write 
\begin{equation*}
SET^*(X,Y)=\left\{\vec{p}_{\pi}(X,Y):\,\pi\text{ is an optimal alignment of }X\text{ and }Y\right\}
\end{equation*}
for the set of empirical distributions corresponding to optimal alignments of $X=X_1\dots X_n$ and $Y=Y_1\dots Y_n$. Consider the event 
\begin{equation*}
{\mathscr D}_n(\vec{p},\epsilon):=\left\{\omega\in\Omega:\,SET^*\left(X(\omega),Y(\omega)\right)
\setminus\overline{\B}_{\epsilon}\left(\vec{p}\right)\neq\emptyset\right\}
\end{equation*}
that there exists an optimal alignment $\pi$ of $x=X(\omega)$ and 
$y=Y(\omega)$ under the scoring function $S$ such that $\|\vec{p}_{\pi}(x,y)-\vec{p}\|>\epsilon$. 

\begin{theorem}\label{convergencepoint}
Let $S$ be a scoring function such that the hyperplane
\begin{equation}\label{hyperplane}
\left\{\vec{x}:\, f_S\left(\vec{x}\right)=\lambda(S)\right\}
\end{equation}
intersects $SET$ in a unique point $\vec{p}_S$, and let $\epsilon>0$ be given. Then there exists
a constant $K_\epsilon$ such that for all $n\in\mathbb{N}$ it is true that  
\begin{equation*}
\prob\left[{\mathscr D}_n\left(\vec{p}_S, \epsilon\right)\right]\leq e^{-K_\epsilon n}.
\end{equation*}
Furthermore, $SET^*(X,Y)\rightarrow\{\vec{p}_S\}$ almost surely as $n$ tends to infinity. 
\end{theorem}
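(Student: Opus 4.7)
The plan is to combine three ingredients: the uniqueness hypothesis on $\vec{p}_S$, a finite halfspace approximation of $SET$, and the Azuma--Hoeffding Inequality (Lemma \ref{Azuma}) applied at \emph{constant} deviation scale. The last point is crucial: the same inequality at the $\ln n/\sqrt{n}$ scale used in the proof of Theorem \ref{convergence2SET} only yields $n^{-c\ln n}$ bounds, whereas constant-scale deviations yield the genuinely exponential-in-$n$ bound demanded here.

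First, since $SET$ is compact (Lemma \ref{lala}) and $\vec{p}_S$ is the unique maximizer of the continuous function $f_S$ on $SET$, there exists $\delta_0>0$ such that every $\vec{p}\in SET$ with $\|\vec{p}-\vec{p}_S\|\geq\epsilon/2$ satisfies $f_S(\vec{p})\leq\lambda(S)-4\delta_0$. Next, all empirical distribution vectors lie in a fixed compact ball $K\subset\R^{|{\mathcal A}^*|^2}$, and $SET=\bigcap_{S'}H(S')$. A compactness argument in the spirit of Lemma \ref{finitehalfspace} then produces scoring functions $S_1,\dots,S_k$ and a common $\delta\in(0,\delta_0]$ such that
\begin{equation*}
\Omega_\delta:=\left\{\vec{p}\in K:\,f_S(\vec{p})\geq\lambda(S)-2\delta,\ f_{S_i}(\vec{p})\leq\lambda(S_i)+\delta,\ i=1,\dots,k\right\}\subset\overline{\B}_\epsilon(\vec{p}_S).
\end{equation*}
Indeed, each $\vec{q}\in K\setminus\overline{\B}_\epsilon(\vec{p}_S)$ is excluded from $\Omega_\delta$ either by the $f_S$-constraint (when $\vec{q}\in SET$, thanks to the separation just obtained) or, when $\vec{q}\notin SET$, by some halfspace: for such $\vec{q}$ there is $S_{\vec{q}}$ with $f_{S_{\vec{q}}}(\vec{q})>\lambda(S_{\vec{q}})$, and continuity provides a whole neighbourhood of $\vec{q}$ on which the strict inequality exceeds any prescribed small $\delta$; compactness of $K\setminus\overline{\B}_\epsilon(\vec{p}_S)$ reduces the cover to a finite list and fixes a common $\delta$.

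Now apply Azuma--Hoeffding at scale $\delta$: for every $S'\in\{S,S_1,\dots,S_k\}$ one obtains $\prob[|L_n(S')/n-\lambda_n(S')|\geq\delta]\leq 2e^{-c_{S'}\delta^2 n}$. Since $\lambda_n(S')\uparrow\lambda(S')$ by \eqref{chvatal}, there is $N_0$ such that $\lambda(S')-\lambda_n(S')\leq\delta$ simultaneously for every $S'$ in the list and every $n\geq N_0$. A union bound then shows that outside an event of probability at most $2(k+1)e^{-cn}$ we have both $L_n(S)/n\geq\lambda(S)-2\delta$ and $L_n(S_i)/n\leq\lambda(S_i)+\delta$ for every $i$. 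On this good event, any $S$-optimal alignment $\pi$ of $X$ with $Y$ produces $\vec{p}_\pi\in SET(X,Y)\subset SET^n$ with $f_S(\vec{p}_\pi)=L_n(S)/n\geq\lambda(S)-2\delta$, while \eqref{AAA} gives $f_{S_i}(\vec{p}_\pi)\leq L_n(S_i)/n\leq\lambda(S_i)+\delta$ for every $i$; hence $\vec{p}_\pi\in\Omega_\delta\subset\overline{\B}_\epsilon(\vec{p}_S)$, so $\mathscr{D}_n(\vec{p}_S,\epsilon)$ does not occur. The exponential bound $\prob[\mathscr{D}_n(\vec{p}_S,\epsilon)]\leq e^{-K_\epsilon n}$ then holds for all $n\geq N_0$ after choosing $K_\epsilon<c\delta^2$, and extends to all $n\in\N$ by decreasing $K_\epsilon$ further to absorb the finitely many small values. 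Almost-sure convergence $SET^*(X,Y)\to\{\vec{p}_S\}$ follows from Borel--Cantelli applied along $\epsilon=1/m$, $m\in\N$.

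I expect the main obstacle to be the finite covering in the second step: a single $\delta$ must be small enough to exploit the $4\delta_0$-gap from uniqueness \emph{and} to render effective the continuity estimates at each of the finitely many cover points $\vec{q}\notin SET$, with the whole list $S_1,\dots,S_k$ fixed before any random realisation is drawn. Once this independence from $n$ is secured, the Azuma step and the Borel--Cantelli conclusion are entirely routine.
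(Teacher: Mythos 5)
Your proof is correct and follows essentially the same route as the paper's: construct a finite polyhedral outer approximation of $SET$ that traps $\vec{p}_S$ in $\B_\epsilon(\vec{p}_S)$, apply the Azuma--Hoeffding concentration bound at \emph{constant} deviation scale (you correctly identified this as the crucial difference from the $\ln n/\sqrt{n}$ scale used in Theorem \ref{convergence2SET}), take a union bound, and finish with Borel--Cantelli. The only substantive difference is local: where the paper invokes Proposition \ref{lala d)}, its general lemma on points of strict curvature, to produce the finite list $S_1,\dots,S_k$ and thresholds, you re-derive the needed instance by a direct compactness cover of $K\setminus\B_\epsilon(\vec{p}_S)$, splitting into points inside and outside $SET$ and using the uniqueness gap for the former and a separating half-space for each of the latter -- a perfectly valid shortcut that amounts to inlining the proof of that proposition in the one case you need.
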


\begin{proof}
By Lemma \ref{lala}, $SET$ is a compact convex set with nonempty intersection with the hyperplane \eqref{hyperplane}, and by \eqref{inequality} all such intersection points are maximizers of the optimization problem $\max_{\vec{y}\in SET}\langle\vec{s},\vec{y}\rangle$, where $\vec{s}$ is the normalization of the vector representation of the linear functional $f_S$ defined by the scoring function. It follows that $\vec{p}_S$ satisfies Definition \ref{strict curvature} of a point of strict curvature of $SET$. Proposition \ref{lala d)} therefore implies that there exist finitely many scoring functions $S_1,S_2,\dots,S_k$ and  thresholds $\epsilon_0,\dots,\epsilon_k>0$ such that 
\begin{equation}
\left\{\vec{x}:\,f_S\left(\vec{x}\right)\geq\lambda(S)-\epsilon_0\right\}\cap\bigcap_{i=1}^k\left\{\vec{x}:\,f_{S_i}\left(\vec{x}\right)\leq\lambda\left(S_i\right)+\epsilon_i\right\}\subset\B_{\epsilon}\left(\vec{p}_S\right), 
\label{epsilonset}
\end{equation}

Consider now the events 
\begin{equation*}
{\mathscr E}_{n,i}:=\left\{\omega\in\Omega:\,SET^n\subset\left\{\vec{x}:\,f_{S_i}\left(\vec{x}\right)\leq \lambda\left(S_i\right)+\epsilon_i\right\}\right\}.
\end{equation*}
By \eqref{AAA} this is equivalent to requiring that the rescaled optimal alignment score $L_n(S_i)/n$ satisfy   $L_n(S_i)/n\leq\lambda(S_i)+\epsilon_i$. By Theorem \ref{Azuma etc} there exists $K_i>0$ such that 
\begin{equation}\label{london1}
\prob\left[{\mathscr E}_{n,i}\right]\geq 1-\e^{-K_i n}\quad\forall\,n. 
\end{equation}
Let us further define the event  
\begin{equation*}
{\mathscr E}_{n,0}:=\left\{\omega\in\Omega:\,SET^n\cap\left\{\vec{x}:\,f_{S_i}\left(\vec{x}\right)\geq \lambda(S)-\epsilon_0\right\}\neq\emptyset\right\},
\end{equation*}
which is the same as requiring that $L_n(S)/n$ exceed the value $\lambda(S)-\epsilon_0$. Corollary \ref{Azuma etc} once again shows that there exists $K_0>0$ such that 
\begin{equation}\label{london2}
\prob\left[{\mathscr E}_{n,0}\right]\geq 1-\e^{-K_0 n}\quad\forall\,n. 
\end{equation}

Combining all of the above, we now find ${\mathscr D}_n^c\subseteq\bigcup_{i=0}^k{\mathscr E}_{n,i}$, so that 
\begin{equation*}
\prob\left[{\mathscr D}_n\right]\leq\sum_{i=0}^k\prob\left[{\mathscr E}^{c}_{n,i}\right]\leq\sum_{i=0}^k\e^{-K_i n}
\leq\e^{-K_{\epsilon}n}
\end{equation*}
for some constant $K_{\epsilon}>0$, as claimed. 

The last statement follows from the Borel-Cantelli Lemma in a similar construction as in the proof of Theorem \ref{convergence2SET}. 
\end{proof}

The above theorem shows that if $\vec{p}_S$ is the only solution to $f_S(\vec{p})=\lambda_S$, then, denoting any optimal alignment of $X_1\dots X_n$ and $Y_1\dots Y_n$ with respect to $S$ by $\pi_n$, it is true a.s.\ that $\vec{p}_{\pi_n}(X_1\dots X_n,Y_1\dots Y_n)\rightarrow\vec{p}_S$. Note however that the convergence rate was not specified. Our convergence argument, which is based on the Azuma-Hoeffding Inequality -- see Theorem \ref{Azuma etc} -- could be made quantitative if a bound on the curvature of $SET$ at $\vec{p}_S$ were known. 

Our second and main result of this section shows that the above theorem generically applies. For this purpose we consider a scoring function $S$ that is chosen randomly in such a way that if $\vec{S}$ denotes the normalization of the vector representation of the linear functional $f_S$, then $\vec{S}$ has an absolutely continuous distribution with respect to the Hausdorff measure (or uniform measure) on the sphere. In this case we say that $S$ has absolutely continous distribution. 

\begin{theorem}\label{haupttheorem}
Let the scoring function $S$ be chosen randomly from an absolutely continuous distribution, and let $\pi_n$ denote any optimal alignment of $X_1\dots X_n$ with $Y_1\dots Y_n$. Then almost surely $\vec{p}_{\pi}(X,Y)$ converges to a unique empirical distribution. 
\end{theorem}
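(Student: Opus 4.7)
The plan is to deduce Theorem \ref{haupttheorem} by conditioning on the scoring function $S$ and invoking Theorem \ref{convergencepoint} for each realization, with Theorem \ref{convex3} supplying the almost sure validity of that theorem's uniqueness hypothesis. Concretely, $S$ is defined on a probability space $(\Omega_S, \prob_S)$ independent of the probability space $(\Omega_{XY}, \prob_{XY})$ that carries the sequences $X$ and $Y$, and I introduce the exceptional set
\begin{equation*}
\mathcal{N} := \left\{\omega_S \in \Omega_S :\, f_{S(\omega_S)} \text{ does not attain its maximum on } SET \text{ at a unique point}\right\}.
\end{equation*}

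First I would apply Theorem \ref{convex3}: since the normalization of the vector representation of $f_S$ is absolutely continuous with respect to the uniform measure on the sphere, $\prob_S[\mathcal{N}] = 0$. Next, for each $\omega_S \notin \mathcal{N}$, the scoring function $S(\omega_S)$ satisfies the hypothesis of Theorem \ref{convergencepoint}: the hyperplane $\{\vec{x}: f_{S(\omega_S)}(\vec{x}) = \lambda(S(\omega_S))\}$ meets $SET$ in a unique point $\vec{p}_{S(\omega_S)}$, and Theorem \ref{convergencepoint} then produces a $\prob_{XY}$-null set $\mathcal{M}_{\omega_S}$ outside of which $\vec{p}_{\pi_n}(X,Y) \to \vec{p}_{S(\omega_S)}$ for every choice of optimal alignment $\pi_n$. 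The final step is a Fubini argument on the product space $(\Omega_S \times \Omega_{XY}, \prob_S \otimes \prob_{XY})$ applied to the event
\begin{equation*}
\mathcal{E} := \left\{(\omega_S, \omega_{XY}) :\, \vec{p}_{\pi_n}(X(\omega_{XY}), Y(\omega_{XY})) \not\to \vec{p}_{S(\omega_S)}\right\}.
\end{equation*}
Since every $\omega_S$-section of $\mathcal{E}$ with $\omega_S \notin \mathcal{N}$ has $\prob_{XY}$-measure zero and $\prob_S[\mathcal{N}] = 0$, Fubini yields $\prob_S \otimes \prob_{XY}[\mathcal{E}] = 0$, which is the joint almost sure convergence asserted in the theorem.

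The step I expect to require the most care is the measurability bookkeeping that legitimizes the Fubini application, namely confirming that the map $\omega_S \mapsto \vec{p}_{S(\omega_S)}$ is measurable on $\Omega_S \setminus \mathcal{N}$ and that $\mathcal{E}$ lies in the product $\sigma$-algebra. For the former I would apply Berge's maximum theorem to the argmax correspondence $S \mapsto \arg\max_{\vec{p}\in SET} f_S(\vec{p})$, which is upper hemicontinuous by compactness of $SET$ (Lemma \ref{lala}) and joint continuity of $(S,\vec{p}) \mapsto f_S(\vec{p})$; on the open locus where this argmax is a singleton, upper hemicontinuity collapses to ordinary continuity of the selector $S \mapsto \vec{p}_S$. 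The latter measurability is inherited from the measurable dependence of $SET(X,Y)$ on the sequences, together with the existence of a measurable selector of optimal alignments obtained from a deterministic tie-breaking rule.
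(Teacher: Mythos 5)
Your proposal is correct and follows the same route as the paper: Theorem \ref{convex3} verifies that the uniqueness hypothesis of Theorem \ref{convergencepoint} holds $\prob_S$-almost surely, and Theorem \ref{convergencepoint} then yields the $\prob_{XY}$-almost sure conclusion for each such $S$. The paper's proof is a single sentence, leaving the product-space Fubini step and the measurability bookkeeping you carefully spell out implicit.
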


\begin{proof}
By Theorem \ref{convex3}, the conditions of Theorem \ref{convergencepoint} apply with probability 1. 
\end{proof}

\section{Fluctuation of the Optimal Alignment Score}\label{fluctuation}

Let $X=X_1\dots X_n$  and $Y=Y_1\dots Y_n$ be the random strings introduced earlier, let ${\mathfrak a}$ and ${\mathfrak b}$ be two distinct letters from the alphabet ${\mathcal A}$, and let us define a new random string $\tilde{X}=\tilde{X}_1\dots\tilde{X}_n$ via the following compound procedure:
\begin{enumerate}
\item sample a realization $x=x_1\dots x_n$ of $X$,
\item if ${\mathcal J}:=\{i:\,x_i={\mathfrak a}\}\neq\emptyset$, 
\begin{enumerate}
\item let $J$ be a random index defined on some probability space $(\tilde{\Omega},\tilde{\prob})$ and taking values with uniform distribution on ${\mathcal J}$, 
\item select a sample $j=J(\tilde{\omega})$, 
\item set $\tilde{x}_j={\mathfrak b}$ and $\tilde{x}_i=x_i$ for $i\neq j$, 
\end{enumerate}
\item else set $\tilde{x}=x$. 
\end{enumerate}
Note that the distribution of $\tilde{X}$ generally differs from the distribution of $X$, and that, while $X=X_1\dots X_n$ consists of the first $n$ letters of a random sequence $(X_i)_{i\in\N}$, the same cannot be said about $\tilde{X}$: we only ever sample (at most) one entry of $X$ realized in the form of an ${\mathfrak a}$, independently of $n$, so that the probability of any given index to be chosen diminishes as $n$ grows: 

The following result was proven by Lember and Matzinger \cite{VARTheta}, where we use the notation
\begin{equation*}
\tilde{L}_n(S):=\max_{\pi}\,S_{\pi}(\tilde{X},Y),
\end{equation*}
in analogy to the earlier introduced random variable $L_n(S)=\max_{\pi}\,S_{\pi}(X,Y)$, and where we write $f(n)=\Theta(n)$ if there exist constants $0<c_1<c_2$ such that $c_1 n\leq f(n)\leq c_2 n$ for all $n\in\N$. 

\begin{theorem}\label{matz}
Let the scoring function $S$ and the distribution of $X$ and $Y$ be chosen so that there 
exist parameters $\beta, \varepsilon>0$ for which 
\begin{equation*}
\prob\left[\expect_{\tilde{\prob}}\left[\tilde{L}_n(S)-L_n(S)\,\|\,X,Y\right]\geq\varepsilon\right]\geq 1-\e^{-\beta n},\quad\forall\,n\in\N. 
\end{equation*}
Then the order of fluctuation of the optimal alignment score is given by 
\begin{equation*}
\variance\left[L_n(S)\right]=\Theta(n).
\end{equation*}
\end{theorem}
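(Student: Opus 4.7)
The plan is to establish matching upper and lower bounds of linear order.

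\textbf{Upper bound.} The optimal score $L_n(S)$ is a deterministic function of the $2n$ independent random letters $X_1,\dots,X_n,Y_1,\dots,Y_n$, and Lemma~\ref{change} shows that replacing any one of them by an i.i.d.\ copy changes $L_n(S)$ by at most the deterministic constant $K:=\max_{\mathfrak{c,d,e}\in\mathcal{A}^*}|S(\mathfrak{c,d})-S(\mathfrak{c,e})|$. The Efron--Stein variance inequality then yields $\Var[L_n(S)]\leq nK^2$, which is the required upper bound.

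\textbf{Lower bound via a covariance inequality.} I would exhibit an explicit linear statistic $W$ of $X$ with $\Var[W]=O(n)$ and $\Cov[L_n(S),W]=\Omega(n)$; Cauchy--Schwarz then delivers $\Var[L_n(S)]\geq\Cov[L_n(S),W]^2/\Var[W]=\Omega(n)$. Setting $p_\mathfrak{c}:=\prob[X_1=\mathfrak{c}]$, take
\begin{equation*}
W:=\sum_{i=1}^n\bigl(p_\mathfrak{a}\mathbf{1}\{X_i=\mathfrak{b}\}-p_\mathfrak{b}\mathbf{1}\{X_i=\mathfrak{a}\}\bigr),
\end{equation*}
a centered sum of i.i.d.\ bounded summands, so $\Var[W]\leq cn$. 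Introduce $\delta_i:=L_n(X^{i\to\mathfrak{b}},Y)-L_n(X^{i\to\mathfrak{a}},Y)$, where $X^{i\to\mathfrak{c}}$ denotes $X$ with its $i$-th coordinate overwritten by $\mathfrak{c}$. Since $\delta_i$ depends only on $(X_{-i},Y)$, it is independent of $X_i$, and a direct expansion of the covariance shows
\begin{equation*}
\Cov[L_n(S),W]=p_\mathfrak{a}p_\mathfrak{b}\sum_{i=1}^n\E[\delta_i],
\end{equation*}
so it suffices to prove $\sum_i\E[\delta_i]\geq\varepsilon n-O(1)$. Here the bias hypothesis enters. On the event $G_n:=\{\omega:\E_{\tilde{\prob}}[\tilde{L}_n-L_n\|X,Y]\geq\varepsilon\}$, which has probability at least $1-e^{-\beta n}$ by assumption, writing $\E_{\tilde{\prob}}[\tilde{L}_n|X,Y]$ as the uniform average of $L_n(X^{j\to\mathfrak{b}},Y)$ over $\{j:X_j=\mathfrak{a}\}$ and noting that $L_n(X^{j\to\mathfrak{b}},Y)-L_n(X,Y)$ coincides with $\delta_j$ exactly on this set, the hypothesis becomes $\sum_{i:X_i=\mathfrak{a}}\delta_i\geq\varepsilon N_\mathfrak{a}(X)$, where $N_\mathfrak{a}(X)$ counts the $\mathfrak{a}$'s in $X$. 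Passing to expectations, using $|\delta_i|\leq K$ to absorb the $G_n^c$ contribution through $Kne^{-\beta n}=O(1)$, and using $\E[N_\mathfrak{a}(X)]=np_\mathfrak{a}$ together with the independence of $X_i$ and $\delta_i$, one derives $p_\mathfrak{a}\sum_i\E[\delta_i]=\E\bigl[\sum_i\mathbf{1}\{X_i=\mathfrak{a}\}\delta_i\bigr]\geq\varepsilon p_\mathfrak{a}n-O(1)$, and division by $p_\mathfrak{a}$ gives the required bound.

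\textbf{Principal obstacle.} The delicate step is the clean passage from the conditional hypothesis on the high-probability event $G_n$ to the unconditional bound on $\sum_i\E[\delta_i]$. This is handled by combining $\prob[G_n^c]\leq e^{-\beta n}$ with the deterministic bound $|\delta_i|\leq K$, so that the error term $Kne^{-\beta n}$ remains $O(1)$. A minor case to verify is the degenerate event $\{N_\mathfrak{a}(X)=0\}$, on which $\tilde{L}_n=L_n$ and the inequality $\sum_{i:X_i=\mathfrak{a}}\delta_i\geq\varepsilon N_\mathfrak{a}(X)$ is vacuous, so it causes no issue.
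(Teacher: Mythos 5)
The paper does not prove Theorem~\ref{matz}; it states it as a known result and cites Lember and Matzinger \cite{VARTheta} (``The following result was proven by Lember and Matzinger...''). So your blind proof cannot be compared against a proof in this paper, but it can be assessed on its own merits.

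Your argument is correct and self-contained. The upper bound via Efron--Stein together with Lemma~\ref{change} is standard (it is essentially the Steele bound cited in the introduction). The lower bound is the real content, and your covariance witness works: writing $L_n=\sum_{\mathfrak{c}}\mathbf{1}\{X_i=\mathfrak{c}\}L_n(X^{i\to\mathfrak{c}},Y)$ and using independence of $X_i$ from $(X_{-i},Y)$, one does indeed get $\Cov[L_n,W]=p_{\mathfrak{a}}p_{\mathfrak{b}}\sum_i\E[\delta_i]$; the identification of $\E_{\tilde{\prob}}[\tilde{L}_n-L_n\|X,Y]$ with the uniform average of $\delta_j$ over $\{j:X_j=\mathfrak{a}\}$ is exactly right, and the passage from the high-probability conditional bias to the unconditional bound $p_{\mathfrak{a}}\sum_i\E[\delta_i]\geq\varepsilon p_{\mathfrak{a}}n-O(ne^{-\beta n})$ is handled correctly using $|\delta_i|\leq\|S\|_{\delta}$ and $\prob[G_n^c]\leq e^{-\beta n}$. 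Cauchy--Schwarz then yields $\Var[L_n]=\Omega(n)$.

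Two small points you should make explicit. First, your lower bound requires $p_{\mathfrak{a}}>0$ \emph{and} $p_{\mathfrak{b}}>0$: if $p_{\mathfrak{b}}=0$, then $W\equiv 0$ and the covariance witness collapses; $p_{\mathfrak{a}}>0$ is forced anyway by the hypothesis (otherwise $\mathcal{J}=\emptyset$ a.s. and the conditional expectation is $0<\varepsilon$), but $p_{\mathfrak{b}}>0$ deserves a sentence. This is harmless in context, since the paper works with i.i.d.\ letters from a finite alphabet with full support (see the $q_{\mathfrak{a}}$ in the denominator in the proof of Theorem~\ref{matz2}). Second, the $\Omega(n)$ lower bound kicks in only for $n$ beyond the point where $\varepsilon n$ dominates the $O(1)$ error; to get $\Theta(n)$ as defined in the paper (linear bounds for all $n$), one quietly adjusts the constant $c_1$ using $\Var[L_n]>0$ for the finitely many remaining $n$, a non-degeneracy that holds here.

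As for the route: the original Lember--Matzinger argument is, to my knowledge, technically different (built around a more combinatorial block/martingale decomposition exploiting the same bias), whereas your packaging via an explicit linear witness and a single Cauchy--Schwarz step is arguably cleaner. Since the present paper merely cites the result, your proof stands as an independent derivation rather than a reproduction.
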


Up until now, the criterion of Theorem \ref{matz} could only be verified in a few special cases.  
We will next see that when the scoring function satisfies the conditions of Theorem \ref{convergencepoint}, then the criterion of Theorem \ref{matz} can be reduced to a condition that solely depends on $\vec{p}_S$ and that can be verified by Montecarlo simulation to high confidence:

\begin{theorem}\label{matz2}
Let $S$ is such that $\{\vec{x}:\,f_S(\vec{x})=\lambda(S)\}$ intersects $S$ in a unique point $\vec{p}_S=(p_{{\mathfrak c}{\mathfrak d}})$ and such that there exist ${\mathfrak a},{\mathfrak b}\in {\mathcal A}$ for which it is the case that 
\begin{equation*}
\sum_{{\mathfrak c}\in \mathcal{A}^*}p_{{\mathfrak a}{\mathfrak c}}\left(S_{{\mathfrak b}{\mathfrak c}}-S_{{\mathfrak a}{\mathfrak c}}\right)>0. 
\end{equation*}
Then the order of fluctuation of the optimal alignment score is given by 
\begin{equation*}
\variance\left[L_n(S)\right]=\Theta(n).
\end{equation*}
\end{theorem}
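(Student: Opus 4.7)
The plan is to verify the hypothesis of Theorem \ref{matz}, since that theorem directly yields the desired conclusion $\variance[L_n(S)]=\Theta(n)$. The hypothesis requires us to exhibit constants $\beta,\varepsilon>0$ with
\begin{equation*}
\prob\!\left[\expect_{\tilde{\prob}}\!\left[\tilde{L}_n(S)-L_n(S)\,\|\,X,Y\right]\geq\varepsilon\right]\geq 1-\e^{-\beta n}
\end{equation*}
for all $n$. The key conceptual point is that the \emph{empirical distribution} $\vec{p}_S$ dictates, to leading order, what letter the random index $J$ lands next to in any optimal alignment, so the benefit of an $\mathfrak{a}\to\mathfrak{b}$ substitution can be read off from $\vec{p}_S$ directly.

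The first step is a deterministic pathwise estimate. Fix a realisation $(x,y)$ of $(X,Y)$ with $\mathcal{J}:=\{i:\,x_i=\mathfrak{a}\}\neq\emptyset$ and fix any optimal alignment $\pi^*$ of $(x,y)$. For $j\in\mathcal{J}$, let $\mathfrak{c}_j\in\mathcal{A}^*$ denote the symbol paired with $x_j$ by $\pi^*$. Modifying $x$ to $\tilde{x}$ by flipping position $J$ from $\mathfrak{a}$ to $\mathfrak{b}$, the alignment $\pi^*$ remains admissible for $(\tilde{x},y)$ and its score changes by exactly $S(\mathfrak{b},\mathfrak{c}_J)-S(\mathfrak{a},\mathfrak{c}_J)$, giving the lower bound $\tilde{L}_n(S)\geq L_n(S)+S(\mathfrak{b},\mathfrak{c}_J)-S(\mathfrak{a},\mathfrak{c}_J)$. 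Since $J$ is uniform on $\mathcal{J}$ under $\tilde{\prob}$, averaging over $j\in\mathcal{J}$ and rewriting the counts $|\{j\in\mathcal{J}:\mathfrak{c}_j=\mathfrak{c}\}|=n\,p^{\pi^*}_{\mathfrak{a}\mathfrak{c}}(x,y)$ yields
\begin{equation*}
\expect_{\tilde{\prob}}\!\left[\tilde{L}_n(S)-L_n(S)\,\|\,X=x,Y=y\right]\geq\frac{\displaystyle\sum_{\mathfrak{c}\in\mathcal{A}^*}p^{\pi^*}_{\mathfrak{a}\mathfrak{c}}(x,y)\bigl(S(\mathfrak{b},\mathfrak{c})-S(\mathfrak{a},\mathfrak{c})\bigr)}{\displaystyle\sum_{\mathfrak{c}\in\mathcal{A}^*}p^{\pi^*}_{\mathfrak{a}\mathfrak{c}}(x,y)}.
\end{equation*}

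Next I apply Theorem \ref{convergencepoint}. Let
\begin{equation*}
\Delta:=\sum_{\mathfrak{c}\in\mathcal{A}^*}p_{\mathfrak{a}\mathfrak{c}}\bigl(S(\mathfrak{b},\mathfrak{c})-S(\mathfrak{a},\mathfrak{c})\bigr)>0,\qquad M:=\sum_{\mathfrak{c}\in\mathcal{A}^*}p_{\mathfrak{a}\mathfrak{c}},
\end{equation*}
both strictly positive (positivity of $M$ is forced by positivity of $\Delta$). By continuity, there exists $\delta>0$ such that whenever $\|\vec{p}_{\pi^*}(x,y)-\vec{p}_S\|\leq\delta$ the numerator above is at least $\Delta/2$ and the denominator is at most $2M$. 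Theorem \ref{convergencepoint} supplies $K_\delta>0$ such that outside an event of probability at most $\e^{-K_\delta n}$, \emph{every} optimal alignment $\pi^*$ has $\|\vec{p}_{\pi^*}(X,Y)-\vec{p}_S\|\leq\delta$. Intersecting this with the event $\{\mathcal{J}\neq\emptyset\}$, whose complement has probability at most $(1-\prob[X_1=\mathfrak{a}])^n$ (also exponentially small, since $M>0$ forces $\prob[X_1=\mathfrak{a}]>0$), one obtains
\begin{equation*}
\prob\!\left[\expect_{\tilde{\prob}}\!\left[\tilde{L}_n(S)-L_n(S)\,\|\,X,Y\right]\geq\frac{\Delta}{4M}\right]\geq 1-\e^{-\beta n}
\end{equation*}
for all $n$ and some $\beta>0$. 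This verifies the hypothesis of Theorem \ref{matz} with $\varepsilon=\Delta/(4M)$, and the conclusion $\variance[L_n(S)]=\Theta(n)$ follows.

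The main obstacle is ensuring that Theorem \ref{convergencepoint} is strong enough: we need a tail bound applying simultaneously to \emph{all} optimal alignments, not merely to one, because the construction of $\pi^*$ above is adversarial with respect to the substitution. Fortunately, the event $\mathscr{D}_n(\vec{p}_S,\delta)$ in Theorem \ref{convergencepoint} is defined in precisely this uniform way, so no additional argument is required beyond carefully tracking constants and verifying that the denominator $\sum_{\mathfrak{c}}p_{\mathfrak{a}\mathfrak{c}}$ is bounded away from zero on the high-probability event.
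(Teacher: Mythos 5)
Your proof is correct and follows essentially the same strategy as the paper's: verify the hypothesis of Theorem \ref{matz} by combining the deterministic pathwise lower bound $\tilde{L}_n(S)\geq L_n(S)+S(\mathfrak{b},\mathfrak{c}_J)-S(\mathfrak{a},\mathfrak{c}_J)$ (keeping the optimal alignment fixed after the substitution) with Theorem \ref{convergencepoint} to control the empirical distribution uniformly over optimal alignments. The one genuine, if minor, divergence is technical: to control the normalisation factor $n/|\mathcal{J}|$, the paper invokes McDiarmid's Inequality separately to bound $|\mathcal{J}|\leq 2nq_{\mathfrak a}$ on an auxiliary high-probability event $\mathscr{F}_n$, whereas you observe that $|\mathcal{J}|/n=\sum_{\mathfrak c}p^{\pi^*}_{\mathfrak{a}\mathfrak{c}}$ is itself a coordinate sum of the empirical distribution vector, and hence is already controlled by the same ball $\B_\delta(\vec{p}_S)$ supplied by Theorem \ref{convergencepoint}. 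Your route eliminates the auxiliary concentration step and is slightly cleaner; the paper's route makes the role of $q_{\mathfrak a}$ explicit without relying on the (true, but not spelled out in either version) identity that the $\mathfrak{a}$-row sum of $\vec{p}_S$ equals $q_{\mathfrak a}$. Both reach the same exponential tail bound and then close with Theorem \ref{matz}.
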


\begin{proof}
Let ${\mathcal J}=\{i\in\{1,\dots,n\}:\,X_i={\mathfrak a}\}$,  $q_{{\mathfrak a}}=\prob[X_1={\mathfrak a}]$, and let us define the event 
\begin{equation*}
{\mathscr F}_n:=\left\{\omega\in\Omega:\,\frac{n}{|{\mathcal J}|}\geq\frac{1}{2q_{\mathfrak a}}\right\}. 
\end{equation*}
Since $X$ has i.i.d.\ entries, McDiarmid's Inequality -- see Lemma \ref{Azuma} below -- implies that for all $n\in\N$, 
\begin{equation}\label{macD}
\prob\left[{\mathscr F}_n\right]\geq1-\e^{-n\frac{q_{\mathfrak a}^2}{2}}.
\end{equation}

Next, let 
\begin{equation*}
\varepsilon:=\frac{1}{4 q_{{\mathfrak a}}}\left\langle\vec{p}_S, \left(S_{{\mathfrak b}{\mathfrak c}}-S_{{\mathfrak a}{\mathfrak c}}\right)_{\mathfrak c}\right\rangle:=\frac{1}{4 q_{{\mathfrak a}}}\sum_{{\mathfrak c}\in \mathcal{A}^*}p_{{\mathfrak a}{\mathfrak c}}\left(S_{{\mathfrak b}{\mathfrak c}}-S_{{\mathfrak a}{\mathfrak c}}\right),
\end{equation*}
where $q_{\mathfrak a}=\prob[X_1={\mathfrak a}]$. By continuity of inner products, there exists a $\delta>0$ so that for any $\vec{p}\in\B_{\delta}(\vec{p}_S)$, we have 
\begin{equation}\label{haho}
\frac{1}{2q_{{\mathfrak a}}}\left\langle\vec{p}, \left(S_{{\mathfrak b}{\mathfrak c}}-S_{{\mathfrak a}{\mathfrak c}}\right)_{\mathfrak c}\right\rangle
\geq\varepsilon.
\end{equation}
Recall now the notations $SET^*(X,Y)$ and ${\mathscr D}_n(\vec{p},\epsilon)$ introduced in Section \ref{point convergence}. Theorem \ref{convergencepoint} shows that there exists $K_{\delta}>0$ such that the probability that all optimal alignments of $X$ and $Y$ have empirical distributions that lie within a distance $\delta$ of $\vec{p}_S$ equals
\begin{equation}\label{eQ}
\prob\left[SET^*(X,Y)\subseteq\B_{\delta}\left(\vec{p}_S\right)\right]=
\prob\left[{\mathscr D}^c_n\left(\vec{p}_S,\delta\right)\right]\geq 1-\e^{-K_{\delta}n},\quad
\forall\, n\in\N.
\end{equation}

But when ${\mathscr D}^c_n(\vec{p}_S,\delta)$ occurs, then for any optimal alignment $\pi_n^*$ of $X$ and $Y$, \eqref{haho} holds with $\vec{p}=\vec{p}_{\pi_n^*}(X,Y)$. Denoting the components of $\vec{p}_{\pi^*_n}(X,Y)$ by $p^*_{{\mathfrak c}{\mathfrak d}}$, where $({\mathfrak c},{\mathfrak d})$ are pairs of letters from ${\mathcal A}^*$, we have 
\begin{align}
\prob&\left[\expect_{\tilde{\prob}}\left[\tilde{L}_n(S)-L_n(S)\,\|\,X,Y\right]\geq\varepsilon\,\Bigr\|\,{\mathscr D}^c_n\left(\vec{p}_S,\delta\right), {\mathscr F}_n\right]\nonumber\\
&\geq\prob\left[\expect_{\tilde{\prob}}\left[S_{\pi^*_n}(\tilde{X},Y)-S_{\pi^*_n}(X,Y)\,\|\,X,Y\right]\geq\varepsilon\,\Bigr\|\,{\mathscr D}^c_n\left(\vec{p}_S,\delta\right), {\mathscr F}_n\right]\nonumber\\
&=\prob\left[\frac{n}{|{\mathcal J}|}\sum_{{\mathfrak c}\in\mathcal{A}^*}p^*_{{\mathfrak a}{\mathfrak c}}\left(S_{{\mathfrak b}{\mathfrak c}}-S_{{\mathfrak a}{\mathfrak c}}\right)\geq\varepsilon\,\Bigr\|\,{\mathscr D}^c_n\left(\vec{p}_S,\delta\right), {\mathscr F}_n\right]\nonumber\\
&\stackrel{\eqref{haho}}{\geq}\prob\left[\frac{2nq_{{\mathfrak a}}}{|{\mathcal J}|}\varepsilon\geq\varepsilon\,\Bigr\|\,{\mathscr D}^c_n\left(\vec{p}_S,\delta\right), {\mathscr F}_n\right]=1\label{macMac}
\end{align}
Therefore, 
\begin{align*}
\prob&\left[\expect_{\tilde{\prob}}\left[\tilde{L}_n(S)-L_n(S)\,\|\,X,Y\right]\geq\varepsilon\right]\\
&\geq\prob\left[\expect_{\tilde{\prob}}\left[\tilde{L}_n(S)-L_n(S)\,\|\,X,Y\right]\geq\varepsilon\,\Bigr\|\,{\mathscr D}^c_n\left(\vec{p}_S,\delta\right), {\mathscr F}_n\right]\times
\prob\left[{\mathscr D}^c_n\left(\vec{p}_S,\delta\right), {\mathscr F}_n\right]\\
&\stackrel{\eqref{macD},\eqref{eQ},\eqref{macMac}}{\geq} 1-\e^{-n\frac{q_{\mathfrak a}^2}{2}}-\e^{-K_{\delta}n},\quad\forall\,n\in\N.
\end{align*}
Thus, the conditions of Theorem \ref{matz} are met for $\beta>0$ small enough, and the claimed order of fluctuation holds. 
\end{proof}

\section{Appendix: Convex Geometry}\label{convex geometry}

We will now present geometric results required in the analysis of earlier parts of this paper. $\Sphere^{n-1}$ will denote the unit sphere in $\R^n$, $\B_{\rho}(\vec{x})$ the Euclidean ball of radius $\rho$ around $\vec{x}\in\R^n$, $d$ the Hausdorff distance, $\conv(\cdot)$ the convex hull and $\closure(\cdot)$ the closure of a set in the canonical subspace topology inherited from $\R^n$. We say that a convex set $C\subset\R^n$ has dimension $k$ if its affine hull $\Aff(C)\subset\R^n$ has dimension $k$.

\begin{theorem}\label{convex3}
Let $C\subset\R^n$ be nonempty compact convex, and let $\vec{S}:\Omega\rightarrow\Sphere^{n-1}$ be a random vector that takes values in the unit sphere with uniform distribution, defined on some probability space $(\Omega,{\mathscr A}, \Prob)$. Then for almost all $\omega\in\Omega$, the optimization 
problem $\arg\max_{\vec{y}\in C}\langle\vec{S}(\omega), \vec{y}\rangle$ has a unique solution. 
\end{theorem}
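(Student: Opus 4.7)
The plan is to argue via the support function and its differentiability. Define the support function of $C$ by
\begin{equation*}
h_C(\vec{s}):=\max_{\vec{y}\in C}\langle \vec{s},\vec{y}\rangle,
\end{equation*}
which, being the supremum of a family of linear functions, is finite (by compactness of $C$), convex, and positively homogeneous on all of $\R^n$. A standard fact from convex analysis identifies the subdifferential with the argmax set,
\begin{equation*}
\partial h_C(\vec{s})=\arg\max_{\vec{y}\in C}\langle\vec{s},\vec{y}\rangle,
\end{equation*}
and the convex function $h_C$ is differentiable at $\vec{s}$ if and only if $\partial h_C(\vec{s})$ is a singleton. So to prove the theorem it is enough to show that the set of directions
\begin{equation*}
B:=\bigl\{\vec{s}\in\R^n:\,h_C\text{ is not differentiable at }\vec{s}\bigr\}
\end{equation*}
intersects $\Sphere^{n-1}$ in a set of zero uniform (surface) measure.

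Since $h_C$ is convex on the open set $\R^n$, it is locally Lipschitz, and Rademacher's theorem (or, equivalently, the classical theorem of Aleksandrov/Busemann-Feller for convex functions) gives that $B$ has $n$-dimensional Lebesgue measure zero. The next step is to transfer this from Lebesgue measure on $\R^n$ to the uniform measure on $\Sphere^{n-1}$. The key observation is that $B$ is a cone: positive homogeneity of $h_C$ implies $\partial h_C(t\vec{s})=\partial h_C(\vec{s})$ for every $t>0$, so $\vec{s}\in B\Leftrightarrow t\vec{s}\in B$. For any cone $K\subset\R^n$, the polar-coordinates identity
\begin{equation*}
\lambda_n\bigl(K\cap\B_1(0)\bigr)=\int_0^1 r^{n-1}\sigma\bigl(K\cap\Sphere^{n-1}\bigr)\,\diff r=\tfrac{1}{n}\sigma\bigl(K\cap\Sphere^{n-1}\bigr),
\end{equation*}
where $\sigma$ denotes $(n-1)$-dimensional Hausdorff (surface) measure, shows that $\lambda_n(K)=0$ implies $\sigma(K\cap\Sphere^{n-1})=0$. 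Applying this with $K=B$ yields $\sigma(B\cap\Sphere^{n-1})=0$, and since the distribution of $\vec{S}$ is the normalized surface measure on $\Sphere^{n-1}$, we conclude $\Prob[\vec{S}\in B]=0$.

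Thus, for almost every $\omega\in\Omega$, $h_C$ is differentiable at $\vec{S}(\omega)$, whence $\partial h_C(\vec{S}(\omega))=\arg\max_{\vec{y}\in C}\langle\vec{S}(\omega),\vec{y}\rangle$ is a singleton, establishing uniqueness of the optimizer almost surely. The main obstacle to watch for is simply ensuring that the standard facts (subdifferential equals argmax, a.e.-differentiability of convex functions, cone/polar-coordinate measure argument) are invoked cleanly; the argument itself is short and conceptual, and no structural assumptions on $C$ beyond nonempty, compact, and convex are required.
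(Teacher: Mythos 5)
Your proof is correct, and it takes a genuinely different (and cleaner) route than the paper's. The paper proves the statement by passing to the polar body $C^\circ$, introducing a radial gauge function $\tau_{\vec{s}}$ on tangent spaces of the sphere, establishing a quantitative Lipschitz estimate for $\tau_{\vec{s}}$ from scratch via an explicit two-dimensional geometric argument (angles, tangent lines to a disk, Taylor expansion of $\tan$), invoking Rademacher on each chart, showing that non-uniqueness of the maximizer produces a kink in $\tau_{\vec{s}}$, covering $\Sphere^{n-1}$ by finitely many charts via compactness, and finally handling the case $\interior C=\emptyset$ by projecting onto $\Span(C)$. Your argument replaces all of this machinery with the support function $h_C$: you use the standard identities $\partial h_C(\vec{s})=\arg\max_{\vec{y}\in C}\langle\vec{s},\vec{y}\rangle$ and \emph{differentiable $\Leftrightarrow$ singleton subdifferential}, note that a finite convex function on $\R^n$ is automatically locally Lipschitz (so no Lipschitz estimate needs to be constructed), apply Rademacher once on $\R^n$, observe that positive homogeneity makes the non-differentiability set a cone, and transfer $\lambda_n$-nullity to $\sigma$-nullity by the polar-coordinate factorization. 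This buys you two things beyond brevity: there is no chart-patching on the sphere, and there is no case distinction on $\dim C$, since $h_C$ and its subdifferential behave uniformly regardless of whether $C$ is full-dimensional. The one thing worth stating explicitly (though it is routine) is that the non-differentiability set $B$ is Borel measurable, so that the polar-coordinate identity can be applied; and, as a cosmetic point, for first-order differentiability Rademacher suffices, so invoking Aleksandrov/Busemann--Feller (which concerns second-order differentiability) is unnecessary.
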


\begin{proof}
Let us first consider the case where $C$ has nonempty interior. Upon a shift of $C$ we may assume without loss of generality that $\vec{0}$ lies in the interior of $C$. Then the polar of $C$, 
\begin{equation*}
C^{\circ}=\left\{\vec{w}\in\R^n:\,\left\langle\vec{w}, \vec{y}\right\rangle\leq 1,\,\forall\vec{y}\in C\right\},
\end{equation*} 
is also compact convex with nonempty interior. Seen as the claim of the theorem is invariant under positive scaling, we may further assume without loss of generality that 
\begin{equation*}
\B_{3}(\vec{0})\subset C^{\circ}\subset\B_{\varrho}(\vec{0}). 
\end{equation*}

Next, let $\vec{s}\in\Sphere^{n-1}$ be a given point on the unit sphere and consider the function 
\begin{align*}
\tau_{\vec{s}}:\Tan_{\vec{s}}\Sphere^{n-1}&\rightarrow\R,\\
\vec{w}&\mapsto\max\left\{\tau>0:\,\tau\vec{w}\in C^{\circ}\right\}
\end{align*}
defined on the tangent space at $\vec{s}$. We claim that $\tau_{\vec{s}}$ is Lipschitz continuous on a sufficiently small neighbourhood $\mathscr{V}_{\vec{s}}$ of $\vec{s}$ in $\Tan_{\vec{s}}\Sphere^{n-1}\cap\B_{2}(\vec{0})$. Let $\vec{w}_1,\vec{w}_2\in\Tan_{\vec{s}}\Sphere^{n-1}\cap\B_{2}(\vec{0})$ and $W=\Span\{\vec{w}_1, \vec{w}_2\}$. For $(i=1,2)$ we then have 
\begin{align}
&1\leq\left\|\vec{w}_i\right\|<2,\label{piervi}\\
&\tau_{\vec{s}}\left(\vec{w}_i\right)=\max\left\{\tau>0:\,\tau\vec{w}_i\in C^{\circ}\cap W\right\},\label{vtoroi}\\
&1<\tau_{\vec{s}}\left(\vec{w}_i\right)\left\|\vec{w}_i\right\|\leq\varrho.\label{treti}
\end{align}
By \eqref{vtoroi}, we may assume without loss of generality that $\R^n=W$ for the purposes of proving  $|\tau_{\vec{s}}(\vec{w}_1)-\tau_{\vec{s}}(\vec{w}_2)|\leq L\|\vec{w}_1-\vec{w}_2\|$ . We refer the reader to Figure \ref{tau} for an 
illustration of the geometric setup. 
\begin{figure}
\begin{center}
\psfrag{w1}{$\vec{w}_1$}
\psfrag{w2}{$\vec{w}_2$}
\psfrag{a}{$a$}
\psfrag{b}{$b$}
\psfrag{tw}{$\tau_{\vec{s}}(\vec{w}_1)\vec{w}_1$}
\scalebox{0.7}{\includegraphics{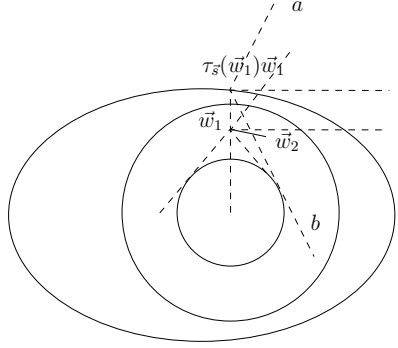}}
\end{center} 
\caption{The geometry of the Lipschitz estimate.} \label{tau}
\end{figure}
The lines $a$ and $b$ are the tangents from $\tau_{\vec{s}}(\vec{w}_1)\vec{w}_1$ to the unit sphere $\Sphere^1$ in $W$. Denote the angle between the line $\overline{\vec{w}_1\vec{w}_2}$ and the horizontal at $\vec{w}_1$ by $\theta$, the angle between the horizontal at $\tau_{\vec{s}}(\vec{w}_1)\vec{w}_1$ and the tangents $a$, $b$ by $\alpha$, and the angle between the horizontal at $\vec{w}_1$ and the two tangents from $\vec{w}_1$ to $\Sphere^1$ by $\beta$. Since the affine hull $\Aff(\vec{w}_1,\vec{w}_2)$ cannot enter $\B_1(\vec{0})$, it must lie wedged between the latter two tangents. In combination with \eqref{piervi}, this implies 
\begin{equation}\label{theta bound}
|\theta|\leq\beta=\frac{\pi}{2}-\arcsin\frac{1}{\left\|\vec{w}_1\right\|}\leq\frac{\pi}{2}-\arcsin\frac{1}{2}. 
\end{equation}
Further, \eqref{treti} implies 
\begin{equation}\label{alpha bound}
\alpha=\frac{\pi}{2}-\arcsin\frac{1}{\tau_{\vec{s}}\left(\vec{w}_1\right)\left\|\vec{w}_1\right\|}\leq
\frac{\pi}{2}-\arcsin\frac{1}{\varrho}. 
\end{equation}
Observe that, by convexity, the line segment between the point of tangency of $a$ at $\Sphere^1$ and $\tau_{\vec{s}}(\vec{w}_1)\vec{w}_1$ lies in $C^{\circ}$, and further that the definition of $\tau_{\vec{s}}(\vec{w}_1)$ implies $\tau_{\vec{s}}(\vec{w}_1)\vec{w}_1\in\partial C^{\circ}$. Therefore, the 
segment of $a$ above $\tau_{\vec{s}}(\vec{w}_1)\vec{w}_1$ lies outside $C^{\circ}$, and it follows that  
\begin{equation}\label{ratio inequality}
\frac{\|B\|}{\|C\|}\leq\tau_{\vec{s}}(\vec{w}_2)\leq\frac{\|A\|}{\|D\|},
\end{equation}
see Figure \ref{ratios}. 
\begin{figure}
\begin{center}
\psfrag{w1}{$\vec{w}_1$}
\psfrag{w2}{$\vec{w}_2$}
\psfrag{A}{$A$}
\psfrag{B}{$B$}
\psfrag{C}{$C$}
\psfrag{D}{$D$}
\psfrag{tw}{$\tau_{\vec{s}}(\vec{w}_1)\vec{w}_1$}
\scalebox{0.7}{\includegraphics{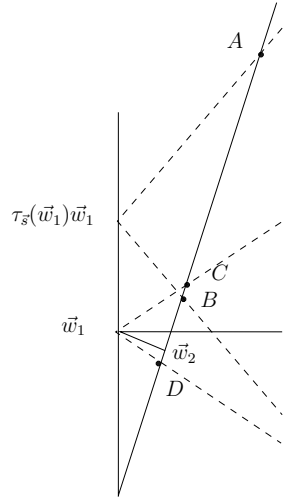}}
\end{center} 
\caption{Bounding $\tau_{\vec{s}}(\vec{w}_2)$ by ratios.} \label{ratios}
\end{figure}
Let $\varphi$ be the angle between $\vec{w}_1$ and $\vec{w}_2$, and let us assume $\varphi<(\pi -2\theta)/2$, so that the intersection points $A,B,C,D$ exist. This assumption is equivalent to limiting our analysis to a sufficiently small neighbourhood of $\vec{s}$ in $\T_{\vec{s}}\Sphere^{n-1}$, as assumed earlier. We can now express the inequalties \eqref{ratio inequality} in terms of the angles we introduced, 
\begin{equation*}
\tau_{\vec{s}}\left(\vec{w}_1\right)\frac{1-\tan\varphi\,\tan\beta}{1+\tan\varphi\,\tan\alpha}\leq\tau_{\vec{s}}\left(\vec{w}_2\right)\leq\tau_{\vec{s}}\left(\vec{w}_1\right)\frac{1+\tan\varphi\, \tan\beta}{1-\tan\varphi\,\tan\alpha}.
\end{equation*}
This can be simplified by Taylor expansion, 
\begin{align}
\left|\tau_{\vec{s}}\left(\vec{w}_2\right)-\tau_{\vec{s}}\left(\vec{w}_1\right)\right|&\leq\tau_{\vec{s}}\left(\vec{w}_1\right)\tan\varphi\left(\tan\alpha+\tan\beta\right)\nonumber\\
&\stackrel{\eqref{piervi},\eqref{treti},\eqref{theta bound},\eqref{alpha bound}}{\leq}
\varrho\tan\varphi\left(\varrho\sqrt{1-\varrho^{-2}}+\sqrt{3}\right),\label{cetviorti}
\end{align}
and since $\|\vec{w}_1-\vec{w}_2\|\geq \|\vec{w}_1\|\tan\varphi$, Equations \eqref{piervi} and \eqref{cetviorti} imply 
\begin{equation*}
\left|\tau_{\vec{s}}\left(\vec{w}_1\right)-\tau_{\vec{s}}\left(\vec{w}_2\right)\right|\leq L\left\|\vec{w}_1-\vec{w}_2\right\|,
\end{equation*}
with $L=\varrho(\varrho\sqrt{1-\varrho^{-2}}+\sqrt{3})$. 

Next, having shown that $\tau_{\vec{s}}$ is Lipschitz continuous on a sufficiently small open neighbourhood 
$\mathscr{V}_{\vec{s}}\subset\Tan_{\vec{s}}\Sphere^{n-1}$ of $\vec{s}$, Rademacher's Theorem \cite{Rademacher}
implies that $\tau_{\vec{s}}$ is Fr\'echet-differentiable everywhere on $\mathscr{V}_{\vec{s}}$ except on a 
null-set $\mathscr{B}_{\vec{s}}\subset\mathscr{V}_{\vec{s}}$. We now claim that if the optimization problem 
\begin{equation}
\vec{x}\left(\vec{s}\right)=\arg\max_{\vec{y}\in C}\left\langle\vec{s}, \vec{y}\right\rangle\label{problem unique}
\end{equation}
has multiple solutions, then $\tau_{\vec{s}}$ is G\^ateaux-nondifferentialble at $\vec{s}$. Since $\tau_{\vec{s}}$ is then also Fr\'echet nondifferentiable at $\vec{s}$, it must be the case that $\vec{s}\in\mathscr{B}_{\vec{s}}$. Let us thus suppose that \eqref{problem unique} has two different solutions, $\vec{x}_0\neq\vec{x}_1$. Then $\langle\vec{s},\vec{x}_1-\vec{x}_0\rangle=0$, so that we have $c_1:=\langle\vec{s},\vec{x}_1\rangle=\langle\vec{s},\vec{x}_0\rangle$. Furthermore, writing $c_2:=\langle \vec{x}_1-\vec{x}_0,\vec{x}_0\rangle$ and $c_3:=\langle\vec{x}_1-\vec{x}_0,\vec{x}_1\rangle$, our 
assumption that $\vec{x}_0\neq\vec{x}_1$ implies $c_2\neq c_3$. Without loss of generality we may assume that $c_2<c_3$. For all $\xi\in\R$ let us define $\vec{w}_{\xi}:=\vec{s}+\xi(\vec{x}_1-\vec{x}_0)$ and consider the restriction $\tau_{\vec{s}}|_{\vec{s}+\Span(\vec{x}_1-\vec{x}_0)}$ which we shall 
denote by $\tau(\xi):=\tau_{\vec{s}}(\vec{w}_{\xi})$. Clearly, if $\tau(\xi)$ is nondifferentiable at $\xi=0$, then $\tau_{\vec{s}}(\vec{w})$ is G\^ateaux-nondifferentiable at $\vec{w}=\vec{s}$. The definition of $\tau(\xi)$ implies $\langle\tau(\xi)\vec{w}_{\xi},\vec{x}_j\rangle\leq 1$ for $(j=0,1)$, so that 
\begin{align*}
\tau(\xi)&\leq\min\left(\frac{1}{c_1+c_2\xi},\frac{1}{c_1+c_3\xi}\right) \\
&=\frac{1}{c_1}\min\left(1-\frac{c_2}{c_1}\xi+\Bigo(\xi^2), 1-\frac{c_3}{c_1}\xi+\Bigo(\xi^2)\right). 
\end{align*}
Furthermore, we have $\tau(0)=1/c_1$. Therefore, 
\begin{equation*}
\frac{\diff}{\diff\xi_+}\tau(0)=\lim_{\xi\rightarrow 0+}\frac{\tau(\xi)-\frac{1}{c_1}}{\xi}\leq
-\frac{c_3}{c_1^2}<-\frac{c_2}{c_1^2}\leq\lim_{\xi\rightarrow 0-}\frac{\tau(\xi)-\frac{1}{c_1}}{\xi}=\frac{\diff}{\diff\xi_-}\tau(0),
\end{equation*}
showing that $\tau(\xi)$ is nondifferentiable at $\xi=0$, as claimed. 

Next, observe that $\tau_{\vec{s}}$ is Fr\'echet differentiable at $\vec{w}\in\mathscr{V}_{\vec{s}}$ if and 
only if the map 
\begin{align*}
\hat{\tau}:\,\Sphere^{n-1}&\rightarrow\R,\\
\vec{z}&\mapsto\max\left\{\tau>0:\,\tau\vec{z}\in C^{\circ}\right\}
\end{align*}
is differentiable at $\hat{w}:=\vec{w}/\|\vec{w}\|$ and if and only if $\tau_{\hat{w}}$ is differentiable 
at $\hat{w}$. Denoting the spherical projections of $\mathscr{V}_{\vec{s}}$ and $\mathscr{B}_{\vec{s}}$ by $\hat{\mathscr V}_{\vec{s}}$ and $\hat{\mathscr B}_{\vec{s}}$, the compactness of $\Sphere^{n-1}$ implies the existence of finitely many points $\vec{s}_1,\dots,\vec{s}_k\in\Sphere^{n-1}$ such that $\cup_{i=1}^k\mathscr{V}_{\vec{s}_i} = \Sphere^{n-1}$. Consequently, 
\begin{equation*}
\mathscr B=\cup_{i=1}^k \mathscr{B}_{\vec{s}_i}
\end{equation*}
is a null-set with the property that if Problem \eqref{problem unique} has multiple solutions for a given $\vec{s}\in\Sphere^{n-1}$, then $\vec{s}\in\mathscr{B}$. This proves the claim of the theorem in the case where $C$ has nonempty interior. 

Let us now consider the general case. When $C$ consists of a singleton, the claim of the theorem is trivial. We may thus assume that $\dim(C)\geq 1$. Upon a shift we may assume without loss of generality that $\vec{0}\in C$. Let $W=\Span(C)$ be the subspace spanned by $C$, and $W^{\perp}$ its orthogonal complement under the Euclidean inner product of $\R^n$. We denote the orthogonal projections onto these spaces by $\pi_W$ and $\pi_{W^{\perp}}$ respectively. Finally, let $\Sphere_W=\Sphere^{n-1}\cap W$ be the unit sphere in $W$, and 
\begin{align*}
\pi_{\Sphere}:\,\Sphere^{n-1}&\rightarrow\Sphere_W,\\
\vec{s}&\mapsto\frac{\pi_{W}\left(\vec{s}\right)}{\left\|\pi_{W}\left(\vec{s}\right)\right\|}
\end{align*}
the rescaled projection of $\Sphere^{n-1}$ into $W$. 

The condition $\dim(C)\geq 1$ implies $\dim(W^{\perp})\leq n-1$, and ${\mathscr B}_{W^\perp}=\{\omega\in\Omega:\,\vec{S}(\omega)\in W^{\perp}\}$ is a null-set. Hence,  $\pi_{\Sphere}(\vec{s})$ is defined for almost all $\vec{s}\in\Sphere^{n-1}$. Further, by isotropy of the uniform distribution on $\Sphere^{n-1}$, the random vector 
\begin{equation*}
\pi_{\Sphere}(\vec{S}):\Omega\setminus{\mathscr B}_{W^{\perp}}\rightarrow\Sphere_{W}
\end{equation*}
is uniformly distributed on $\Sphere_{W}$. Since $C$ has nonempty interior in the subspace topology of $W$, the case we already settled above applies and implies that 
\begin{equation*}
{\mathscr B}_{W}=\left\{\omega\in\Omega\setminus{\mathscr B}_{W^{\perp}}:\,\arg\max_{\vec{y}\in C}\left\langle\pi_{\Sphere}(\vec{S}(\omega)),\vec{y}\right\rangle\text{ is nonunique}\right\}
\end{equation*}
is a null-set. Observing that for $\vec{s}\in\Sphere^{n-1}\setminus W^{\perp}$ it is the case that 
\begin{equation*}
\arg\max_{\vec{y}\in C}\left\langle\vec{s},\vec{y}\right\rangle=\arg\max_{\vec{y}\in C}\left\langle\pi_{\Sphere}(\vec{S}(\omega)),\vec{y}\right\rangle, 
\end{equation*}
we find that $\arg\max_{\vec{y}\in C}\langle\vec{S}(\omega), \vec{y}\rangle$ has a unique solution if and only if $\omega$ is not in the null-set ${\mathscr B}={\mathscr B}_{W^{\perp}}\cup{\mathscr B}_{W}$. 
\end{proof}

The following notion will play a key role in the sequel. 

\begin{definition}\label{strict curvature}
Let $C\subset\R^n$ be convex compact. We say that a boundary point $\vec{x}\in\partial C$ is a point of strict curvature if there exists $\vec{s}\in\Sphere^{n-1}$ such that the optimization problem $\max_{\vec{y}\in C}\langle\vec{s},\vec{y}\rangle$ has $\vec{x}$ as unique maximizer. We denote the set of points of strict curvature by $C_{SE}$. 
\end{definition}

Note that if $C$ has a differentiable boundary, then any point where all principal curvatures are nonzero is a point of strict curvature. However, the set of points of strict curvature may be larger. For example, the epigraph of the curve $x\mapsto|x|^3$ has zero curvature at $x=0$, but this is a point of strict curvature nonetheless under our definition. Furthermore, Definition \ref{strict curvature} also applies to points where $\partial C$ is nondifferentiable and principal curvatures are not defined. For example, vertices of polytopes are points of strict curvature, while points on edges (1-faces) are not. Proposition \ref{lala d)} below also provides further intuition. 

For the purposes of the next result, let us recall that the normal cone of $C$ at $\vec{x}\in C$ is defined as follows, 
\begin{equation*}
\Nor_{\vec{x}}C=\left\{\vec{s}\in\R^n:\,\langle\vec{s}, \vec{x}-\vec{w}\rangle\geq 0,\,\forall\,\vec{w}\in C\right\}, 
\end{equation*}
or equivalently, 
\begin{align}
\Nor_{\vec{x}}C&=\bigl\{\vec{s}\in\R^n:\,\vec{x}=\arg\max_{\vec{w}\in C}\langle\vec{s}, \vec{w}\rangle\bigr\}\nonumber\\
&=\bigl\{\tau\vec{s}:\,\tau\geq 0,\,\vec{s}\in\Sphere^{n-1},\,\vec{x}=\arg\max_{\vec{w}\in C}\langle\vec{s},\vec{w}\rangle\bigr\}. 
\label{inspector hector}
\end{align}
By the dual description of $C$, it is the case that 
\begin{equation}\label{Norway}
\Nor_{\vec{x}}C\cap\Sphere^{n-1}\cap\Span(C)\neq\emptyset 
\end{equation}
if and only if $\vec{x}\in\partial C$, see also Lemma \ref{finitehalfspace} c). We denote the set of extreme points\footnote{A point $\vec{x}\in C$ is an extreme point of $C$ if it cannot be written as a convex combination of two points $\vec{y},\vec {z}\in C\setminus\{\vec{x}\}$.} of $C$ by $C_E$.

\begin{proposition}\label{lala d)}
For any $C\subset\R^n$ nonempty convex compact, the following hold true:
\begin{itemize}
\item[a) ] $\vec{x}\in C_{SE}$ if and only if there exists $\vec{s}\in\Nor_{\vec{x}}C\cap\Sphere^{n-1}$ and sequences $(\delta_k)_{k\in\N}, (\epsilon_k)_{k\in\N}\subset\R_+$ such that $\epsilon_k,\delta_k\rightarrow 0$ as $k$ tends to infinity and 
\begin{equation*}
\left\{\vec{y}\in C:\,\Nor_{\vec{y}}C\cap\Sphere^{n-1}\cap\B_{\delta_k}\left(\vec{s}\right)\neq\emptyset\right\}\subset\B_{\epsilon_k}\left(\vec{x}\right),\quad\forall\,k\in\N. 
\end{equation*}
\item[b) ] $C_{SE}\subseteq C_E\subseteq\closure(C_{SE})$. 
\item[c) ] $\{\vec{x}\in\partial C:\,\Normal_{\vec{x}} C\cap\Normal_{\vec{v}} C=\Span(C)^{\perp},\,\forall\,\vec{v}\in C\setminus\{\vec{x}\}\}\subset C_{SE}$. 
\item[d) ]  Let $\vec{x}_0\in C_{SE}$ and $\vec{s}_0\in\Nor_{\vec{x}_0} C\cap\Sphere^{n-1}$ be chosen such that $\vec{x}_0$ is the unique maximizer of $\max_{\vec{y}\in C}\langle\vec{s}_0,\vec{y}\rangle$, and let  $\epsilon>0$ be given. Then there exist finitely many points $\vec{x}_i \in C$ and normal vectors $\vec{s}_i\in\Nor_{\vec{x}_i} C\cap\Sphere^{n-1}$, $(i=1,\dots,k)$, such that
\begin{equation*}
C(\xi_0,\dots,\xi_k):=\left\{\vec{x}\in\R^n:\,\left\langle\vec{s}_0,\vec{x}-\vec{x}_0\right\rangle\geq \xi_0\right\}\cap\bigcap_{i=1}^k 
\left\{\vec{x}\in\R^n:\,\left\langle\vec{s}_i,\vec{x}-\vec{x}_i\right\rangle\leq \xi_i\right\}
\end{equation*}
is compact for all $(\xi_0,\dots,\xi_k)\in\R^{k+1}$, and $C(0,\dots,0)\subset\B_{\varepsilon}(\vec{x}_0)$.
\end{itemize}
\end{proposition}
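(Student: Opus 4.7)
Parts (a)--(c) I would dispatch as warm-ups. For (a), the forward direction is the upper semicontinuity of the argmax: if the conclusion failed, one could extract sequences $\vec{y}_k\in C$ and $\vec{s}_k\in\Nor_{\vec{y}_k}C\cap\Sphere^{n-1}$ with $\vec{s}_k\to\vec{s}$ and $\|\vec{y}_k-\vec{x}\|\ge\epsilon>0$; by compactness of $C$, a subsequence converges to some $\vec{y}\neq\vec{x}$, and passing to the limit in $\langle\vec{s}_k,\vec{y}_k\rangle\ge\langle\vec{s}_k,\vec{w}\rangle$ for all $\vec{w}\in C$ shows $\vec{y}$ is also a maximizer of $\langle\vec{s},\cdot\rangle$, contradicting uniqueness. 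The reverse direction is immediate: any other maximizer $\vec{y}$ has $\vec{s}\in\Nor_{\vec{y}}C\cap\Sphere^{n-1}$, so $\vec{y}\in\B_{\epsilon_k}(\vec{x})$ for every $k$, forcing $\vec{y}=\vec{x}$. For (b), $C_{SE}\subseteq C_E$ follows from linearity (if $\vec{x}=(\vec{y}_1+\vec{y}_2)/2$ is the unique maximizer of some $\langle\vec{s},\cdot\rangle$, then $\vec{y}_1$ and $\vec{y}_2$ are also maximizers, hence both equal $\vec{x}$), and $C_E\subseteq\closure(C_{SE})$ is Straszewicz's theorem, which asserts that the exposed points of a compact convex set in $\R^n$ are dense in its extreme points. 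For (c), \eqref{Norway} supplies $\vec{s}\in\Nor_{\vec{x}}C\cap\Sphere^{n-1}\cap\Span(C)$; since $\Span(C)\cap\Span(C)^\perp=\{\vec{0}\}$ and $\|\vec{s}\|=1$, we have $\vec{s}\notin\Span(C)^\perp$. Any rival maximizer $\vec{v}\in C\setminus\{\vec{x}\}$ of $\langle\vec{s},\cdot\rangle$ would satisfy $\vec{s}\in\Nor_{\vec{v}}C\cap\Nor_{\vec{x}}C=\Span(C)^\perp$, a contradiction, so $\vec{x}\in C_{SE}$.

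For part (d), the construction splits into two stages. Stage~1 secures compactness of $C(\xi_0,\dots,\xi_k)$ for every $\xi$. The key observation is that the recession cone $\{\vec{d}:\langle\vec{s}_0,\vec{d}\rangle\ge 0,\ \langle\vec{s}_i,\vec{d}\rangle\le 0\ (i=1,\dots,k)\}$ is independent of $\xi$ and equals the polar of the convex cone generated by $-\vec{s}_0,\vec{s}_1,\dots,\vec{s}_k$, and so vanishes if and only if these vectors positively span $\R^n$. I therefore begin by choosing unit vectors $\vec{s}_1,\dots,\vec{s}_m$ so that $\{-\vec{s}_0,\vec{s}_1,\dots,\vec{s}_m\}$ already positively spans $\R^n$ (for example, take the outer normals of a regular simplex around the origin, rotated so that $-\vec{s}_0$ is one vertex), and for each $j$ pick $\vec{x}_j\in\arg\max_{\vec{y}\in C}\langle\vec{s}_j,\vec{y}\rangle$, which exists by compactness of $C$ and satisfies $\vec{s}_j\in\Nor_{\vec{x}_j}C\cap\Sphere^{n-1}$. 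Denote by $P_0$ the resulting compact polytope $C(0,\dots,0)$ built from these $m+1$ constraints.

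Stage~2 refines $P_0$ to lie inside $\B_\epsilon(\vec{x}_0)$ by adjoining further supporting half-spaces of $C$ without changing the recession cone. If $P_0\subseteq\B_\epsilon(\vec{x}_0)$ already, stop. Otherwise, for any $\vec{x}\in P_0\setminus\B_\epsilon(\vec{x}_0)$ the inequality $\langle\vec{s}_0,\vec{x}\rangle\ge\langle\vec{s}_0,\vec{x}_0\rangle=\max_{\vec{y}\in C}\langle\vec{s}_0,\vec{y}\rangle$, combined with the uniqueness of $\vec{x}_0$ as maximizer, forces $\vec{x}\notin C$; strict separation then yields $\vec{t}(\vec{x})\in\Sphere^{n-1}$ with $\langle\vec{t}(\vec{x}),\vec{x}\rangle>\max_{\vec{y}\in C}\langle\vec{t}(\vec{x}),\vec{y}\rangle$, and by continuity this strict inequality persists on an open neighborhood of $\vec{x}$. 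Since $P_0\setminus\B_\epsilon(\vec{x}_0)$ is closed in the compact set $P_0$ and hence compact, a finite subcover yields directions $\vec{t}_1,\dots,\vec{t}_p$; I set $\vec{s}_{m+i}:=\vec{t}_i$ and $\vec{x}_{m+i}\in\arg\max_{\vec{y}\in C}\langle\vec{t}_i,\vec{y}\rangle$. The new polyhedron $C(0,\dots,0)$ contains no point of $P_0\setminus\B_\epsilon(\vec{x}_0)$ and therefore lies inside $\B_\epsilon(\vec{x}_0)$, while adjoining constraints only shrinks the recession cone, leaving it $\{\vec{0}\}$; hence $C(\xi_0,\dots,\xi_k)$ remains compact for every $\xi\in\R^{k+1}$.

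The main obstacle is part~(d), because the compactness requirement must hold for \emph{all} perturbations $\xi$ simultaneously with the tight containment $C(0,\dots,0)\subseteq\B_\epsilon(\vec{x}_0)$. The staged approach works precisely because the recession cone of $C(\xi_0,\dots,\xi_k)$ depends only on the $\vec{s}_i$, not on the $\xi_i$, and can only shrink as further constraints are added; this decouples the two requirements and lets the positive-spanning step be handled independently of the subsequent separation-and-finite-cover step. A naive attempt to pick normals aimed simultaneously at positive spanning and at whittling $P_0$ down to $\B_\epsilon(\vec{x}_0)$ becomes tangled, whereas the decoupling exploited here keeps each step transparent.
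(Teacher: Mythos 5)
Your proof is correct, and parts (a) and (c) follow essentially the same route as the paper. However, you diverge genuinely in parts (b) and (d), in each case taking a shortcut or a cleaner decomposition.

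For (b), you invoke Straszewicz's theorem, which is exactly the statement $C_E\subseteq\closure(C_{SE})$ since $C_{SE}$ is precisely the set of exposed points of $C$. The paper instead derives this from first principles: it first shows $\closure(\conv(C_{SE}))=C$ by Hahn--Banach separation combined with Theorem~\ref{convex3}, and then pulls the extreme points of $C$ into $\closure(C_{SE})$ via the Carath\'eodory--Steinitz theorem and a compactness argument on the simplex of barycentric coordinates. Citing Straszewicz is the more economical choice; the paper's from-scratch derivation is presumably a deliberate illustration of Theorem~\ref{convex3}, which was just proved.

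For (d), you decouple the two requirements---unconditional compactness of $C(\xi_0,\dots,\xi_k)$ and the tight containment $C(0,\dots,0)\subseteq\B_\epsilon(\vec{x}_0)$---by observing that the recession cone $\{\vec{d}:\langle\vec{s}_0,\vec{d}\rangle\ge0,\ \langle\vec{s}_i,\vec{d}\rangle\le0\}$ is independent of $\xi$ and vanishes exactly when $\{-\vec{s}_0,\vec{s}_1,\dots,\vec{s}_k\}$ positively spans $\R^n$. You therefore first build a compact polytope $P_0$ from a positively spanning frame, then cut away $P_0\setminus\B_\epsilon(\vec{x}_0)$ by a finite family of strictly separating supporting half-spaces of $C$. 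The paper proceeds in the opposite order: it covers the compact annulus $K_2=\closure(\B_\rho(\vec{0}))\setminus\B_\epsilon(\vec{x}_0)$ by open half-spaces derived from the dual description of $C$, extracts a finite subcover, and then appends a bounding cuboid (generated by $\pm\vec{e}_j$) to guarantee compactness. Both arguments are correct; the explicit recession-cone observation in your version makes the compactness-for-all-$\xi$ claim transparent, whereas the paper's cuboid is more ad hoc but achieves the same. One small point worth spelling out in your Stage~1: $P_0$ is nonempty because $\vec{x}_0\in P_0$ (indeed $\langle\vec{s}_0,\vec{x}_0-\vec{x}_0\rangle=0$ and $\vec{x}_0\in C\subseteq\{\vec{y}:\langle\vec{s}_j,\vec{y}-\vec{x}_j\rangle\le0\}$), so the finite subcover in Stage~2 is taken of a genuinely compact, possibly nonempty set.
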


\begin{proof}
a) Let $\vec{x}\in C_{SE}$, and let $\vec{s}\in\Nor_{\vec{x}} C\cap\Sphere^{n-1}$ such that $\vec{x}$ is the unique maximizer of 
\begin{equation}\label{cateye}
\max_{\vec{y}\in C}\langle\vec{s},\vec{y}\rangle. 
\end{equation}
Let $(\delta_k)_{k\in\N}\subset\R_+$ be a sequence such that $\delta_k\rightarrow 0$. We claim that there exists a sequence $(\epsilon_k)_{k\in\N}$ with the required properties. Supposing the claim to be wrong, there exists an $\epsilon>0$ and sequences $(\vec{x}_k)_{k\in\N}$, $(\vec{s}_k)_{k\in\N}$ such that 
\begin{align*}
\vec{x}_k&\in C\setminus\B_{\epsilon}\left(\vec{x}\right),\\
\vec{s}_k&\in\Nor_{\vec{x}_k} C\cap\Sphere^{n-1}\cap\B_{\delta_k}\left(\vec{s}\right).
\end{align*}
Since $\delta_k\rightarrow 0$, we have $\vec{s}_k\rightarrow\vec{s}$, and $C\setminus\B_{\epsilon}(\vec{x})$ being compact, we may assume without loss of generality that $\vec{x}_k\rightarrow\vec{x}_*$ for some $\vec{x}_*\in C\setminus\B_{\epsilon}(\vec{x})$. By \eqref{inspector hector}, this shows that $\vec{x}$ is not the unique maximizer of \eqref{cateye} and contradicts the membership of $\vec{x}$ in $C_{SE}$. Our claim is thus correct, and this establishes the ``only if'' part. 

To prove the converse, let us assume that $(\delta_k)_{k\in\N}$ and $(\epsilon_k)_{k\in\N}$ with the required properties exist. By \eqref{inspector hector}, all maximizers $\vec{x}_*$ of \eqref{cateye} must satisfy $\vec{s}\in\Nor_{\vec{x}_*}C$,  and since $\vec{s}\in\Sphere^{n-1}\cap\B_{\delta_k}(\vec{s})$, this implies $\vec{x}_*\in\B_{\epsilon_k}(\vec{x})$  for all $k\in\N$, which can only be true if $\vec{x}_*=\vec{x}$. This shows that $\vec{x}$ is the unique maximizer of \eqref{cateye}, and hence $\vec{x}\in C_{SE}$. 

b) For any point $\vec{x}\in C\setminus C_E$ there exist two other points $\vec{v},\vec{w}\in C$ of which $\vec{x}$ is a convex combination, $\vec{x}=\xi\vec{v}+(1-\xi)\vec{w}$. Let $\vec{s}\in\N_{\vec{x}} C\cap\Sphere^{n-1}$, so that $\vec{x}$ is a maximizer of \eqref{cateye}. The existence of such an $\vec{s}$ is guaranteed by \eqref{inspector hector}. By convexity, we have $\langle\vec{s},\vec{x}\rangle\leq\max(\langle\vec{s},\vec{v}\rangle, \langle\vec{s},\vec{w}\rangle)$. Without loss of generality, we may assume that the maximum is achieved at $\langle\vec{s},\vec{w}\rangle$, and therefore, $\vec{w}$ is also a maximizer of \eqref{cateye}.  Since this construction works for any $\vec{s}\in\N_{\vec{x}} C\cap\Sphere^{n-1}$, this shows that $\vec{x}\notin C_{SE}$, and hence, $C_{SE}\subseteq C_E$. 

Next, we claim that $K_1:=\closure(\conv(C_{SE}))=C$. Assuming our claim to be wrong, there exists $\vec{x}\in C\setminus K_1$, and since $K_1\subset C$ is compact, the Hahn-Banach Separation Theorem then implies that there exists $\vec{s}\in\Sphere^{n-1}$ such that 
\begin{equation}\label{separation}
\eta:=\max_{\vec{y}\in K_1}\left\langle\vec{s},\vec{y}\right\rangle<\xi:=\left\langle\vec{s},\vec{x}\right\rangle. 
\end{equation}
By Theorem \ref{convex3}, there exist sequences $(\vec{s}_k)_{k\in\N}\subset\Sphere^{n-1}$ and $(\vec{y}_{k})_{k\in\N}\subset C_{SE}$ such that 
\begin{align*}
\vec{s}_k&\stackrel{k\rightarrow\infty}{\longrightarrow}\vec{s},\\
\vec{y}_k&=\arg\max_{\vec{y}\in C}\left\langle\vec{s},\vec{y}\right\rangle,
\end{align*}
and since $K_1$ is compact, we may further assume without loss of generality that $\vec{y}_k\rightarrow\vec{y}_*\in K_1$. Using the continuity of the function $\vec{s}\mapsto\max_{\vec{y}\in C}\langle\vec{s},\vec{y}\rangle$, we thus find that 
\begin{equation*}
\xi=\lim_{k\rightarrow\infty}\left\langle\vec{s}_k,\vec{y}_k\right\rangle=\lim_{k\rightarrow\infty}\left\langle\vec{s}_k,\vec{y}_*\right\rangle\leq\eta. 
\end{equation*}
This contradicts \eqref{separation} and proves our claim to be true. 

Next, let $\vec{x}\in C_E$. By what we know so far, there exists a sequence of points $(\vec{x}_k)_{k\in\N}\subset\conv(C_{SE})$ that converges to $\vec{x}$. The Carath\'eodory-Steinitz  Theorem \cite{steinitz} then implies that each $\vec{x}_k$ can be written as a convex combination 
\begin{equation*}
\vec{x}_k=\sum_{i=1}^{\dim(C)+1}\theta^k_{i}\vec{z}_{i,k}
\end{equation*}
of at most $\dim(C)+1$ points $\vec{z}_{i,k}\in C_{SE}$, $(i=1,\dots,\dim(C)+1)$. By compactness of 
\begin{equation*}
\Delta_{\dim(C)+1}\otimes\left(\otimes_{i=1}^{\dim(C)+1}\closure\left(C_{SE}\right)\right),
\end{equation*}
where $\Delta_{\dim(C)+1}$ denotes the $\dim(C)$-dimensional simplex, we may assume without loss of generality that 
\begin{align*}
\theta^k&\stackrel{k\rightarrow\infty}{\longrightarrow}\theta^*\in\Delta_{\dim(C)+1},\\
\vec{z}_{i,k}&\stackrel{k\rightarrow\infty}{\longrightarrow}\vec{z}_{i,*}\in\closure(C_{SE}), 
\end{align*}
so that 
\begin{equation*}
\vec{x}=\lim_{k\rightarrow\infty}\vec{x}_k=\sum_{i=1}^{\dim(C)+1}\theta^*_i\vec{z}_{i,*}\in
\conv(\closure(C_{SE}))\subset C=\closure(\conv(C_{SE}). 
\end{equation*}
Seen as $\vec{x}$ is an extreme point of $C$, it must also be an extreme point of the subset $\conv(\closure(C_{SE}))$. Hence, all $\vec{z}_{i,*}$ must be identical. This shows that $\vec{x}=\vec{z}_{1,*}\in\closure(C_{SE})$ and proves the inclusion $C_E\subseteq\closure(C_{SE})$. 

c) This follows directly from Equations \eqref{inspector hector} and \eqref{Norway}.

d) By the dual description of $C$, we have 
\begin{equation*}
\bigcap_{\vec{x}\in C}\bigcap_{\vec{s}\in\Nor_{\vec{x}}C\cap\Sphere^{n-1}}\left\{\vec{y}\in\R^n:\,
\left\langle\vec{s},\vec{y}-\vec{x}\right\rangle\leq 0\right\}=C, 
\end{equation*}
and since it suffices to take this intersection over a dense subset of $\vec{x}\in C$, we have 
\begin{equation}\label{eis}
\bigcap_{\vec{x}\in\partial C\setminus\{\vec{x}_0\}}\bigcap_{\vec{s}\in\Nor_{\vec{x}}C\cap\Sphere^{n-1}}\left\{\vec{y}\in\R^n:\,
\left\langle\vec{s},\vec{y}-\vec{x}\right\rangle\leq 0\right\}=C. 
\end{equation}
By compactness of $C$ there exists $\rho>0$ such that $C\subset\closure(\B_{\rho}(\vec{0}))$. Consider the compact set $K_2=\closure(\B_{\rho}(\vec{0}))\setminus\B_{\varepsilon}(\vec{x}_0)$. 
By the assumed properties of $\vec{x}_0$ and $\vec{s}_0$, it is further true that  
\begin{equation}
\left\{\vec{y}\in\R^n:\,\left\langle\vec{s}_0,\vec{y}-\vec{x}_0\right\rangle\geq 0\right\}\cap C=\left\{\vec{x}_0\right\}. \label{dru}
\end{equation}
Equations \eqref{eis} and \eqref{dru} now show that 
\begin{equation*}
K_2\subset\left\{\vec{y}\in\R^n:\,\left\langle\vec{s}_0, \vec{y}-\vec{x}_0\right\rangle<0\right\}\cup\bigcup_{\vec{x}\in C\setminus\{\vec{x}_0\}}
\bigcup_{\vec{s}\in\Nor_{\vec{x}}C\cap\Sphere^{n-1}}\left\{\vec{y}\in\R^n:\,\left\langle\vec{s}, \vec{y}-\vec{x}\right\rangle>0\right\}. 
\end{equation*}
By compactness of $K_2$, there exist finitely many $\vec{x}_i\in C\setminus\{\vec{x}_0\}$ and 
$\vec{s}_i\in\Nor_{\vec{x}_i} C\cap\Sphere^{n-1}$, $(i=1,\dots,k)$, such that 
\begin{equation*}
K_2\subset\left\{\vec{y}\in\R^n:\,\left\langle\vec{s}_0, \vec{y}-\vec{x}_0\right\rangle<0\right\}\cup\bigcup_{i=1}^k 
\left\{\vec{y}\in\R^n:\,\left\langle\vec{s}_i, \vec{y}-\vec{x}_i\right\rangle>0\right\}. 
\end{equation*}
Let $\vec{e}_j$ be the $j$-th coordinate vector in $\R^n$. Let us write $\vec{s}_{j,\ell}=(-1)^{\ell}\vec{e}_j$ for $\ell\in\{0,1\}$, and let  
\begin{equation*}
\vec{x}_{j,\ell}\in\arg\max_{\vec{w}\in C}\left\langle\vec{s}_{j,\ell},\vec{w}\right\rangle. 
\end{equation*}
Then $\vec{s}_{j,\ell}\in\Nor_{\vec{x}_{j,\ell}}C$, and 
\begin{equation*}
\Cube:=\bigcap_{\ell=0}^1\bigcap_{j=1}^n\left\{\vec{y}\in\R^n:\,\left\langle\vec{s}_{j,\ell}, \vec{y}-\vec{x}_{j,\ell}\right\rangle\leq 0\right\}
\end{equation*}
is a cuboid. By choosing $\rho$ large enough for $\Cube\subset\closure(\B_{\rho}(\vec{0}))$ to hold, 
and by including the $(\vec{x}_{j,\ell},\vec{s}_{j,\ell})$ among our list of points $\{(\vec{x}_i,\vec{s}_i):\,i=1,\dots,k\}$ if necessary, we can guarantee that 
\begin{equation*}
\left\{\vec{y}\in\R^n:\,\left\langle\vec{s}_0, \vec{y}-\vec{x}_0\right\rangle\geq 0\right\}\cap\bigcap_{i=1}^k\left\{\vec{y}\in\R^n:\,\left\langle\vec{s}_i, \vec{y}-\vec{x}_i\right\rangle\leq 0\right\}\subset\Cube\setminus K_2\subset
\B_{\varepsilon}(\vec{x}_0),
\end{equation*}
and that $C(\xi_0,\dots,\xi_k)$ is compact (although possibly empty) for all $(\xi_0,\dots,\xi_k)$. 
\end{proof}

\begin{proposition}\label{convex2}
Let $C\subset\R^n$ be nonempty convex compact  and $C^1,C^2,\dots$ compact subsets of $\R^n$ such that $d(C^n, C)\rightarrow 0$, where $d$ denotes the Hausdorff distance. Let $\vec{s}\in\Sphere^{n-1}$ be such that the optimization problem 
\begin{equation}\label{maximizer of}
\vec{x}_*=\arg\max_{\vec{x}\in C}\left\langle\vec{s}, \vec{x}\right\rangle 
\end{equation}
has a unique solution. For all $n\in\N$ let $\vec{x}_n$ be a solution of 
\begin{equation*}
\vec{x}_n=\arg\max_{\vec{x}\in C^n}\left\langle\vec{s}, \vec{x}\right\rangle.
\end{equation*}
Then $\vec{x}_n\rightarrow\vec{x}_*$ as $n$ tends to infinity. 
\end{proposition}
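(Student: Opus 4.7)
The plan is to combine a standard stability argument with the uniqueness of the maximizer. The key input will be that Hausdorff convergence $d(C^n,C) \to 0$ gives two-sided approximation: every point of $C$ can be approximated from $C^n$ and vice versa.

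First I would observe that the sequence $(\vec{x}_n)$ is bounded. Indeed, Hausdorff convergence implies that there exists $N \in \N$ and $R > 0$ such that $C^n \subset C + \overline{\B}_1(\vec{0}) \subset \overline{\B}_R(\vec{0})$ for all $n \geq N$, using compactness of $C$. So $(\vec{x}_n)_{n \geq N}$ lives in a fixed compact set.

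Next I would show $\langle\vec{s}, \vec{x}_n\rangle \to \langle\vec{s}, \vec{x}_*\rangle$. For the $\liminf$, since $\vec{x}_* \in C$ and $d(C^n,C) \to 0$, for each $n$ there exists $\vec{y}_n \in C^n$ with $\|\vec{y}_n - \vec{x}_*\| \leq d(C^n,C) \to 0$, and by optimality of $\vec{x}_n$,
\begin{equation*}
\langle\vec{s},\vec{x}_n\rangle \geq \langle\vec{s},\vec{y}_n\rangle \xrightarrow{n\to\infty} \langle\vec{s},\vec{x}_*\rangle.
\end{equation*}
For the $\limsup$, since $\vec{x}_n \in C^n$, there exists $\vec{z}_n \in C$ with $\|\vec{x}_n - \vec{z}_n\| \leq d(C^n,C) \to 0$, so by the Cauchy--Schwarz inequality,
\begin{equation*}
\langle\vec{s},\vec{x}_n\rangle \leq \langle\vec{s},\vec{z}_n\rangle + \|\vec{x}_n-\vec{z}_n\| \leq \langle\vec{s},\vec{x}_*\rangle + d(C^n,C).
\end{equation*}
Combining the two estimates yields $\langle\vec{s},\vec{x}_n\rangle \to \langle\vec{s},\vec{x}_*\rangle$.

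Finally I would conclude convergence of the argmaxes by contradiction using the uniqueness assumption. If $\vec{x}_n \not\to \vec{x}_*$, then by boundedness established above, some subsequence $(\vec{x}_{n_k})$ converges to a point $\vec{x}' \neq \vec{x}_*$. Since each $\vec{x}_{n_k}$ lies within distance $d(C^{n_k},C)\to 0$ of $C$ and $C$ is closed, $\vec{x}' \in C$. Continuity of $\langle\vec{s},\cdot\rangle$ together with the previous step give $\langle\vec{s},\vec{x}'\rangle = \langle\vec{s},\vec{x}_*\rangle$, so $\vec{x}'$ is another maximizer of \eqref{maximizer of}, contradicting uniqueness. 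Hence $\vec{x}_n \to \vec{x}_*$.

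There is no real obstacle here; the main subtlety is simply making sure both directions of the Hausdorff distance are used (inner approximation of $\vec{x}_*$ by $C^n$ for the lower bound on the objective, and outer approximation of $\vec{x}_n$ by $C$ for the upper bound). Note that convexity of $C^n$ is not needed for this argument, only compactness together with the Hausdorff convergence hypothesis.
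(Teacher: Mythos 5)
Your proof is correct, and it takes a genuinely different and more elementary route than the paper's. The paper works with explicit scaling estimates: it decomposes $\R^n$ along $W=\Span(C)$ and $W^{\perp}$, normalizes so $\B_{\rho}(\vec{0})\cap W\subset C\subset\B_{1}(\vec{0})\cap W$, shows $C^n\subset(1+\epsilon/\rho)C\times(\B_\epsilon(\vec{0})\cap W^\perp)$ for $n$ large, and invokes a gap function $\delta(\epsilon)$ (defined using convexity of $C$ and uniqueness of $\vec{x}_*$) such that $f(\vec{x})<f(\vec{x}_*)-(2+f(\vec{x}_*)/\rho)\epsilon$ off $\B_{\delta(\epsilon)}(\vec{x}_*)$, yielding an explicit bound $\|\vec{x}_n-\vec{x}_*\|\leq(1+\epsilon/\rho)\delta(\epsilon)+\epsilon/\rho$. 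You instead prove objective-value convergence from both directions of the Hausdorff distance and then close via a compactness/subsequence contradiction argument, which sidesteps all the polar-cone-style bookkeeping. Two further observations are worth making: (i) your argument never uses convexity of $C$ (not just of the $C^n$) — compactness, closedness, and uniqueness of the maximizer suffice — so it is strictly more general than the statement requires; (ii) the paper's approach would, in principle, deliver a quantitative rate of convergence if $\delta(\epsilon)$ were controlled, whereas yours is purely qualitative. Since the proposition as stated asks only for qualitative convergence, your version is arguably the cleaner proof of exactly what is claimed.
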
 

\begin{proof}
Let $f(\vec{x})=\langle\vec{s},\vec{x}\rangle$ be the linear functional defined by $\vec{s}$. We note that 
\begin{equation*}
\liminf_{n\rightarrow\infty}\max_{\vec{y}\in C^n}f(\vec{y})\geq f(\vec{x}_*), 
\end{equation*}
since for any $\epsilon>0$ there exists $n_{\epsilon}$ such that for all $n\geq n_{\epsilon}$ there exists $\vec{y}_n\in A^n\cap\B_{\epsilon}(\vec{x}_*)$, and we have 
\begin{equation}\label{nepsilon}
f(\vec{x}_*)=f(\vec{y}_n)+\langle\vec{s},\vec{x}_*-\vec{y}_n\rangle<f(\vec{y}_n)+\epsilon. 
\end{equation}
Next, let $W=\Span(C)$ and $W^{\perp}$ be its orthogonal complement in $\R^n$. Upon shifting and rescaling, we may assume without loss of generality that $\B_{\rho}(\vec{0})\cap W\subset C\subset\B_1(\vec{0})\cap W$ for some $\rho>0$, so that $f(\vec{x}_*)\leq 1$. The condition $d(C^n, C)\rightarrow 0$ then implies that for any $\epsilon>0$ we may take $n_{\epsilon}$ to be large enough so that for $n\geq n_{\epsilon}$, 
\begin{equation*}
C^n\subset\left(1+\frac{\epsilon}{\rho}\right)C\times\left(\B_{\epsilon}(\vec{0})\cap W^{\perp}\right). 
\end{equation*}
Convexity of $C$ and the uniqueness of $\vec{x}_*$ as a maximizer of \eqref{maximizer of} imply that there exists $\delta(\epsilon)>0$ such that 
\begin{equation}\label{chanterelle}
f(\vec{x})<f(\vec{x}_*)-\left(2+\frac{f(\vec{x}_*)}{\rho}\right)\epsilon
\end{equation}
for all $\vec{x}\in C\setminus\B_{\delta(\epsilon)}(\vec{x}_*)$, and that $\delta(\epsilon)\rightarrow 0$ when $\epsilon\rightarrow 0$. Furthermore, we have 
\begin{equation}\label{parasol}
\sup_{y\in\B_{\epsilon}(\vec{0})\cap W^{\perp}}f(\vec{y})<\epsilon. 
\end{equation}
\eqref{chanterelle} and \eqref{parasol} imply that for all $\vec{x}\in (1+\epsilon/\rho)\left(C\setminus\B_{\delta(\epsilon)}(\vec{x}_*)\right)\times\left(\B_{\epsilon}(\vec{0})\cap W^{\perp}\right)$, 
\begin{align*}
f(\vec{x})&<\left(1+\frac{\epsilon}{\rho}\right)\left(f(\vec{x}_*)-\left(2+\frac{f(\vec{x}_*)}{\rho}\right)\epsilon
\right)+\epsilon\\
&<f\left(\vec{x}_*\right)-\epsilon. 
\end{align*}
In particular, this applies to all $\vec{x}\in C^n\setminus(1+\epsilon/\rho)\B_{\delta(\epsilon)}(\vec{x}_*)$, and \eqref{nepsilon} shows that for $n\geq n_{\epsilon}$ we have  $\vec{x}_n\in(1+\epsilon/\rho)\B_{\delta(\epsilon)}(\vec{x}_*)$. We conclude that 
\begin{align*}
\|\vec{x}_n-\vec{x}_*\|&\leq\left\|\vec{x}_n-\left(1+\frac{\epsilon}{\rho}\right)\vec{x}_*\right\|+\left\|\left(1+\frac{\epsilon}{\rho}\right)\vec{x}_*-\vec{x}_*\right\|\\
&\leq\left(1+\frac{\epsilon}{\rho}\right)\delta(\epsilon)+\frac{\epsilon}{\rho}. 
\end{align*}
Since we may choose $\epsilon,\delta\rightarrow 0$ when $n$ is allowed to go to infinity, the result follows. 
\end{proof}

Next, we shall investigate the approximability of compact convex sets by polyhedra and polytopes. Results on outer approximations by polyhedra and algorithms to achieve this in practice are widespread in the literature on the cutting plane approach in numerical optimization, see e.g.\ Bertsekas \cite{bertsekas}. Similar results for inner approximations by polytopes play a key role in Markov chain Monte Carlo methods for the estimation of the volume of high dimensional convex bodies, see e.g.\ Jerrum \cite{jerrum}. The literature in both areas is focused on algorithms and relies on separation or membership oracles. As a result, the constructions use outer approximations by cutting planes that do not necessarily touch the boundary of the convex body to be approximated, and likewise, inner approximations use generators that generally do not lie on the boundary either. 

In contrast, the approximations required by our analysis have a crucial interplay with the boundary. For outer approximations, we would like cutting hyperplanes to be supported at points of strict curvature. Likewise, we would like inner approximations to be generated as the convex hull of points of strict curvature. Since we are not aware of such resuls appearing in the literature, we derive them from first principles. 

\begin{lemma}\label{finitehalfspace}
Let $C\subset\R^n$ be a convex set with dual description 
\begin{equation}\label{dual description}
C=\bigcap_{\vec{s}\in\Sphere^{n-1}}H_{\vec{s}}, 
\end{equation}
where $H_{\vec{s}}=\left\{\vec{x}:\,\langle\vec{s},\vec{x}\rangle\leq\lambda\left(\vec{s}\right)\right\}$ 
for some continuous function $\vec{s}\mapsto\lambda(\vec{s})\in\R$. Then the following hold true: 
\begin{itemize}
\item[a) ] $C$ is compact. 
\item[b) ] For any given $\epsilon>0$, there exists a finite collection of points $\vec{s}_1,\dots,\vec{s}_k\in\Sphere^{n-1}$ for which 
\begin{equation}\label{Sepsilon}
\max_{\vec{x}\in \cap_{i=1}^k H(\vec{s}_i)}d\left(\vec{x},C\right)\leq\epsilon.
\end{equation}
\item[c) ] For every point $\vec{x}\in\partial C$, there exists $s\in\Sphere^{n-1}$ such that 
\begin{equation*}
\left\langle\vec{s},\vec{x}\right\rangle=\max_{\vec{y}\in C}\left\langle\vec{s},\vec{y}\right\rangle
=\lambda\left(\vec{s}\right). 
\end{equation*}
\end{itemize}
\end{lemma}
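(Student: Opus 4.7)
My plan is to tackle the three parts sequentially, with part (a) providing the compactness that part (b) needs.

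For part (a), I would first observe that $C$ is closed and convex as an intersection of closed half-spaces. To prove boundedness, I invoke that $\lambda$ is continuous on the compact sphere $\Sphere^{n-1}$, hence bounded by some $M < \infty$. Given any nonzero $\vec{x} \in C$, I plug the unit vector $\vec{s} = \vec{x}/\|\vec{x}\|$ into the defining inequality $\langle \vec{s}, \vec{x}\rangle \leq \lambda(\vec{s})$, which yields $\|\vec{x}\| \leq M$. Therefore $C \subset \overline{\B}_M(\vec{0})$, and closedness plus boundedness gives compactness.

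For part (b), the strategy is to use a fine finite net on the sphere together with support-function reasoning. I would augment the finite family by including the $2n$ unit coordinate vectors $\pm\vec{e}_j$, so that the intersection $\bigcap_i H(\vec{s}_i)$ automatically sits inside a cuboid of radius $R:=\sqrt{n}\cdot\max_j \max(\lambda(\vec{e}_j),\lambda(-\vec{e}_j))$. Then, exploiting uniform continuity of $\lambda$ on $\Sphere^{n-1}$, I pick $\delta > 0$ small enough so that $|\lambda(\vec{s})-\lambda(\vec{s}')| \leq \epsilon/2$ whenever $\|\vec{s}-\vec{s}'\|\leq\delta$, and also $R\delta\leq\epsilon/2$. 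Choosing a finite $\delta$-net $\vec{s}_1,\dots,\vec{s}_k$ of $\Sphere^{n-1}$ containing the coordinate vectors above, I obtain for any $\vec{x}\in\bigcap_i H(\vec{s}_i)$ and any $\vec{s}\in\Sphere^{n-1}$, picking the nearest net point $\vec{s}_i$,
\begin{equation*}
\langle\vec{s},\vec{x}\rangle=\langle\vec{s}_i,\vec{x}\rangle+\langle\vec{s}-\vec{s}_i,\vec{x}\rangle\leq\lambda(\vec{s}_i)+R\delta\leq\lambda(\vec{s})+\epsilon.
\end{equation*}
The key payoff is now a support-function identity: if $h_C(\vec{s}):=\max_{\vec{y}\in C}\langle\vec{s},\vec{y}\rangle=\lambda(\vec{s})$ (which follows from the dual description \eqref{dual description}), then $\langle\vec{s},\vec{x}\rangle\leq h_C(\vec{s})+\epsilon$ for all unit $\vec{s}$ is equivalent to $\vec{x}\in C+\overline{\B}_\epsilon(\vec{0})$, because the support function of the Minkowski sum $C+\overline{\B}_\epsilon(\vec{0})$ equals $h_C(\vec{s})+\epsilon$ on $\Sphere^{n-1}$, and compact convex sets are determined by their support functions. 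This gives $d(\vec{x},C)\leq\epsilon$ as required.

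For part (c), I would use the classical supporting hyperplane theorem (a consequence of Hahn--Banach): since $C$ is closed convex with nonempty interior in $\Aff(C)$, any boundary point $\vec{x}\in\partial C$ admits a supporting hyperplane, i.e.\ there exists $\vec{s}\in\Sphere^{n-1}$ with $\langle\vec{s},\vec{x}\rangle\geq\langle\vec{s},\vec{y}\rangle$ for all $\vec{y}\in C$. This gives $\langle\vec{s},\vec{x}\rangle=\max_{\vec{y}\in C}\langle\vec{s},\vec{y}\rangle$, and the dual description forces this maximum to coincide with $\lambda(\vec{s})$: on one hand it is $\leq\lambda(\vec{s})$ by $\vec{x}\in H_{\vec{s}}$, and on the other hand strict inequality would contradict maximality on $C$ as one could move $\vec{x}$ slightly outward in the $\vec{s}$-direction and remain in every $H_{\vec{s}'}$.

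The main obstacle I anticipate is in part (b), specifically the need to guarantee a uniform $\|\vec{x}\|$-bound on the approximating intersection before the net argument kicks in — this is what forces the inclusion of the coordinate directions. The translation from ``approximate support function inequalities'' to ``Hausdorff proximity'' via Minkowski sums is clean but must be invoked carefully, since without the a priori bound $R$ the cross-term $\|\vec{s}-\vec{s}_i\|\cdot\|\vec{x}\|$ cannot be controlled.
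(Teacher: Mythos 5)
Your argument for part a) is correct and in fact a bit cleaner than the paper's: instead of observing that $C$ sits inside the compact cuboid $\bigcap_i H_{\vec{e}_i}\cap H_{-\vec{e}_i}$, you directly bound $\|\vec{x}\|$ by testing the defining inequality against $\vec{s}=\vec{x}/\|\vec{x}\|$. Both are fine.

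However, parts b) and c) rest on a claim that is false in general: that the dual description $C=\bigcap_{\vec{s}}H_{\vec{s}}$ forces the support function $h_C(\vec{s})=\max_{\vec{y}\in C}\langle\vec{s},\vec{y}\rangle$ to equal $\lambda(\vec{s})$ for all $\vec{s}$. The dual description only gives $h_C\leq\lambda$; the reverse inequality can fail. For a concrete counterexample take $C=[-1,1]^2\subset\R^2$ with $\lambda(\vec{s})=h_C(\vec{s})+g(\vec{s})$, where $g(\vec{s})$ is the distance from $\vec{s}$ to the set $\{\pm\vec{e}_1,\pm\vec{e}_2\}$ on the circle. Then $\lambda$ is continuous, $\lambda\geq h_C$ with strict inequality except at $\pm\vec{e}_1,\pm\vec{e}_2$, and yet $\bigcap_{\vec{s}}H_{\vec{s}}\subseteq H_{\vec{e}_1}\cap H_{-\vec{e}_1}\cap H_{\vec{e}_2}\cap H_{-\vec{e}_2}=[-1,1]^2=C$, so the dual description holds. (Note that the paper itself must work quite hard in Lemma \ref{lala} d) to prove $\max_{\vec{x}\in SET}f_S(\vec{x})=\lambda(S)$, which would be vacuous if it already followed from the dual description.)

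This breaks part b) at the final step: your net argument correctly yields $\langle\vec{s},\vec{x}\rangle\leq\lambda(\vec{s})+\epsilon$ for all $\vec{s}$, but that only places $\vec{x}$ in $\bigcap_{\vec{s}}\{\vec{y}:\langle\vec{s},\vec{y}\rangle\leq\lambda(\vec{s})+\epsilon\}$, which strictly contains $C+\overline{\B}_\epsilon(\vec{0})$ whenever $\lambda>h_C$ somewhere. You would get $d(\vec{x},C)\leq\epsilon$ only if $\lambda=h_C$. The correct conclusion is that the nested family $C_\epsilon:=\bigcap_{\vec{s}}\{\vec{y}:\langle\vec{s},\vec{y}\rangle\leq\lambda(\vec{s})+\epsilon\}$ decreases to $C$ as $\epsilon\downarrow 0$, which by compactness implies $d(C_\epsilon,C)\to 0$, but there is no linear rate $d(C_\epsilon,C)\leq\epsilon$; you must therefore choose $\epsilon$ a posteriori to match a prescribed tolerance. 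This is essentially what the paper's proof does: it reduces the intersection to a countable dense set of directions, considers the nested sets $G_j=\bigcap_{i\leq j}H(\vec{s}_i)$, and shows $\max_{\vec{x}\in G_j}d(\vec{x},C)\to 0$ by a sequential compactness/contradiction argument rather than by a quantitative $\epsilon$-estimate.

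Part c) fails at the same place. After invoking the supporting hyperplane theorem to get $\vec{s}$ with $\langle\vec{s},\vec{x}\rangle=h_C(\vec{s})$, you argue that $h_C(\vec{s})<\lambda(\vec{s})$ is impossible because one could move $\vec{x}$ slightly outward along $\vec{s}$ and remain in every $H_{\vec{s}'}$. This is not true: take $C=[-1,1]^2$, $\vec{x}=(1,1)$ and $\vec{s}=(1/\sqrt2,1/\sqrt2)$ (a legitimate supporting direction at this vertex); moving $\vec{x}$ along $\vec{s}$ by any $t>0$ immediately violates $H_{\vec{e}_1}$. The difficulty is that the constraints that block outward movement may be different from the $\vec{s}$ at hand, and not all supporting directions of $C$ at $\vec{x}$ satisfy $h_C=\lambda$; one must locate a particular $\vec{s}$ that does. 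The paper handles this by approaching $\vec{x}$ from outside $C$: since $\vec{x}\in\partial C$, there are nearby points outside $C$, each of which violates some $H_{\vec{s}_k}$, and compactness of the sphere gives a convergent subsequence $\vec{s}_k\to\vec{s}_*$ for which both $\langle\vec{s}_*,\vec{x}\rangle\geq\lambda(\vec{s}_*)$ and $\langle\vec{s}_*,\vec{x}\rangle\leq\lambda(\vec{s}_*)$ hold, forcing equality. You need some argument of this type.
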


\begin{proof}
a) Since $C$ is a closed subset of the compact set 
\begin{equation*}
\bigcap_{i=1}^n H_{\vec{e}_i}\cap H_{-\vec{e}_i}, 
\end{equation*}
it is itself compact. 

b) The continuity of $\lambda$ and of the Hausdorff distance imply that the function $s\mapsto d\left(\vec{x},H(s)\right)$ for any fixed $\vec{x}\in\R^{n}$ is continuous. The dual description \eqref{dual description} can therefore be amended by taking the intersection over $\vec{s}$ in a dense subset ${\mathcal S}\subset\Sphere^{n-1}$ only, and since the unit sphere is a separable space, we may take ${\mathcal S}$ to be a countable set $\{\vec{s}_i:\,i\in\N\}$. Thus, we have
\begin{equation}\label{countability}
C=\bigcap_{i=1}^{\infty}H\left(\vec{s}_i\right). 
\end{equation}
Consider the nested sets  
\begin{equation}\label{GH}
G_j:=\cap_{i=1}^j H(\vec{s}_i).
\end{equation}
Since $C$ is compact, we may furthermore order the vectors $\vec{s}_i$ so that $G_j$ is compact for all $j\geq 2n$. Our claim is clearly true if we can establish that 
\begin{equation}\label{wieder}
\max_{\vec{x}\in G_j}d(\vec{x},C)\stackrel{j\rightarrow\infty}{\longrightarrow} 0. 
\end{equation}
Assuming the contrary, there exists $\epsilon>0$, a subsequence $(G_{j_i})_{i\in\N}$ and points  $\vec{y}_{j_i}\in G_{j_i}$ such that 
\begin{equation}\label{hu}
d\left(\vec{y}_{j_i},C\right)\geq\epsilon, \quad\forall\,i\in\N. 
\end{equation}
Since all but finitely many terms of the sequence $(\vec{y}_{j_i})_{i\in\N}$ are contained in the compact set $G_{2n}$, there exists a convergent subsequence $(\vec{z}_k)_{k\in\N}\subseteq(\vec{y}_{j_i})_{i\in\N}$ with limit $\vec{z}_*$. By continuity of the distance function and by virtue of Equation \eqref{hu}, we have  
\begin{equation*}
d\left(\vec{z}_*, C\right)=\lim_{k\rightarrow\infty}d\left(\vec{z}_k, C\right)\geq\epsilon.
\end{equation*}
Since this is in direct contradiction with $\vec{z}_*\in\bigcap_{i\in\N}G_{j_i}=C$, implied by \eqref{countability}, our claim is true. 

c) If $C=\emptyset$, there is nothing to prove. Hence, we may assume that $C$ is nonempty. Without loss of generality, we may furthermore assume that $C$ is shifted so that it contains the origin and $\lambda(\vec{s})\geq 0$ for all $\vec{s}\in\Sphere^{n-1}$. Let $\vec{x}\in\partial C$. Then, by the dual description \eqref{dual description},  
\begin{equation*}
\left\langle\vec{s},\vec{x}\right\rangle\leq\max_{\vec{y}\in C}\left\langle\vec{s},\vec{y}\right\rangle
\leq\lambda\left(\vec{s}\right)
\end{equation*}
for all $\vec{s}\in\Sphere^{n-1}$. Thus, it suffices to prove the existence of $\vec{s}$ such that $\langle\vec{s}, \vec{x}\rangle\geq\lambda(\vec{s})$. If $\vec{x}=\vec{0}$, then we may assume without loss of generality that $C$ has empty interior, as any interior point could otherwise be shifted to the origin. By convexity, $C$ then lies in a lower-dimensional subspace, and it suffices to take $\vec{s}\in\Span(C)^{\perp}$. 
If $\vec{x}\neq\vec{0}$, then $(1+1/k)\vec{x}\notin C$ for all $k\in\N$. By virtue of the Hahn-Banach Theorem there exist unit vectors $\vec{s}_k\in\Sphere^{n-1}$ such that 
\begin{equation}\label{prequel}
\left\langle\vec{s}_k, \vec{x}\right\rangle\geq\frac{\lambda\left(\vec{s}_k\right)}{1+\frac{1}{k}}. 
\end{equation}
$\Sphere^{n-1}$ being compact, we can extract a convergent subsequence and assume without loss of generality that $\vec{s}_k\rightarrow \vec{s}_*\in\Sphere^{n-1}$, as $k\rightarrow\infty$. By continuity of $\lambda$, \eqref{prequel} implies 
\begin{equation*}
\left\langle\vec{s}_*, \vec{x}\right\rangle\geq\lambda\left(\vec{s}_*\right). 
\end{equation*}
\end{proof}

\begin{theorem}\label{finitehalfspace d)}
Let $C$ be as in Lemma \ref{finitehalfspace} and nonempty. Then the points $\vec{s}_i$ in part b) of Lemma \ref{finitehalfspace} can be chosen so that $\vec{x}_i=\arg\max_{\vec{y}\in C}\langle\vec{s}_i, \vec{y}\rangle$ is unique for all $i$, that is, $\vec{x}_i$ are points of strict curvature. 
\end{theorem}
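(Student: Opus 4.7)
The plan is to start from the finite collection $\vec{s}_1^0,\dots,\vec{s}_m^0$ supplied by Lemma \ref{finitehalfspace} b) with tolerance $\epsilon/2$ and then perturb each $\vec{s}_i^0$ slightly into a nearby direction $\vec{s}_i\in\Sphere^{n-1}$ whose associated supporting hyperplane touches $C$ at a single point. The existence of such \emph{generic} directions arbitrarily close to $\vec{s}_i^0$ is furnished by Theorem \ref{convex3}, while continuity of the support function $\lambda$ will let me control the Hausdorff damage done by the perturbation.

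First I would invoke Lemma \ref{finitehalfspace} b) to obtain $\vec{s}_1^0,\dots,\vec{s}_m^0\in\Sphere^{n-1}$ with
\begin{equation*}
G^0:=\bigcap_{i=1}^m H\left(\vec{s}_i^0\right)\quad\text{satisfying}\quad\max_{\vec{x}\in G^0}d\left(\vec{x},C\right)\leq\frac{\epsilon}{2},
\end{equation*}
and I would ensure that the $2n$ coordinate directions $\pm\vec{e}_1,\dots,\pm\vec{e}_n$ are among the $\vec{s}_i^0$, so that $G^0\subset\B_R(\vec{0})$ for some $R>0$.

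By Theorem \ref{convex3} applied with $\vec{S}$ uniform on $\Sphere^{n-1}$, the set
\begin{equation*}
{\mathcal B}:=\left\{\vec{s}\in\Sphere^{n-1}:\,\arg\max_{\vec{y}\in C}\langle\vec{s},\vec{y}\rangle\text{ is not unique}\right\}
\end{equation*}
has surface measure zero, so every open subset of $\Sphere^{n-1}$ meets its complement, i.e., $\Sphere^{n-1}\setminus{\mathcal B}$ is dense. For a small parameter $\delta>0$ to be fixed below, I would pick for each $i$ a direction $\vec{s}_i\in\Sphere^{n-1}\setminus{\mathcal B}$ with $\|\vec{s}_i-\vec{s}_i^0\|\leq\delta$ and, by continuity of $\lambda$, also $|\lambda(\vec{s}_i)-\lambda(\vec{s}_i^0)|\leq\delta$. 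Then each maximizer $\vec{x}_i:=\arg\max_{\vec{y}\in C}\langle\vec{s}_i,\vec{y}\rangle$ is unique, and hence $\vec{x}_i\in C_{SE}$.

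The main obstacle will be to verify that this replacement preserves the outer approximation; concretely, to show that $G:=\bigcap_{i=1}^m H(\vec{s}_i)$ still satisfies $\max_{\vec{x}\in G}d(\vec{x},C)\leq\epsilon$. A standard polyhedral stability argument shows that, for $\delta$ small enough, the perturbed coordinate-type directions keep $G$ bounded, say $G\subset\B_{R+1}(\vec{0})$. For any $\vec{x}\in G$ and each index $i$ we then estimate
\begin{equation*}
\langle\vec{s}_i^0,\vec{x}\rangle
=\langle\vec{s}_i,\vec{x}\rangle+\langle\vec{s}_i^0-\vec{s}_i,\vec{x}\rangle
\leq\lambda(\vec{s}_i)+(R+1)\delta
\leq\lambda(\vec{s}_i^0)+(R+2)\delta,
\end{equation*}
so $G$ is contained in the $(R+2)\delta$-relaxation of the bounded polyhedron $G^0$. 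Since such relaxations converge to $G^0$ in Hausdorff distance as the slack tends to zero, choosing $\delta$ small enough gives $\max_{\vec{x}\in G}d(\vec{x},G^0)\leq\epsilon/2$, and the triangle inequality combined with $\max_{\vec{y}\in G^0}d(\vec{y},C)\leq\epsilon/2$ yields the required bound.
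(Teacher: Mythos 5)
Your proof is correct, but it follows a genuinely different route than the paper's. The paper observes that by Theorem~\ref{convex3} the set ${\mathcal S}$ of directions $\vec{s}$ with $\arg\max_{\vec{y}\in C}\langle\vec{s},\vec{y}\rangle\in C_{SE}$ has full measure, hence is dense; since ${\mathcal S}$ is a dense subset of the separable space $\Sphere^{n-1}$, it contains a countable subset that is still dense in $\Sphere^{n-1}$, and the paper then substitutes this countable set into the \emph{proof} of Lemma~\ref{finitehalfspace}~b), which works verbatim for any countable dense collection of normals. You instead treat Lemma~\ref{finitehalfspace}~b) as a black box: you take its finite output $\vec{s}_1^0,\dots,\vec{s}_m^0$ at tolerance $\epsilon/2$, perturb each into a nearby generic direction $\vec{s}_i\in\Sphere^{n-1}\setminus{\mathcal B}$, and then pay for the perturbation with a stability estimate, using Lipschitz continuity of the support function $\lambda$, boundedness of the perturbed polyhedron (via the coordinate-type half-spaces), and the fact that slack-$\eta$ relaxations of a bounded polyhedron Hausdorff-converge to it as $\eta\to 0$. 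Both proofs hinge on the same key input (Theorem~\ref{convex3} providing density of generic directions); the paper's version is shorter because it avoids the explicit perturbation bookkeeping by re-entering the lemma's construction, whereas your version is more modular and quantitative, at the cost of invoking a polyhedral stability step that you do not fully spell out but which is standard. One minor wrinkle: you should not pick $\delta$ to simultaneously control $\|\vec{s}_i-\vec{s}_i^0\|$ and $|\lambda(\vec{s}_i)-\lambda(\vec{s}_i^0)|$ merely ``by continuity''; rather, use that $\lambda$ is Lipschitz with constant $\max_{\vec{y}\in C}\|\vec{y}\|$ so that the second bound follows from the first with an explicit constant, which also streamlines the final choice of $\delta$.
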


\begin{proof}
By Theorem \ref{convex3}, the set 
\begin{equation*}
{\mathcal S}=\left\{\vec{s}\in\Sphere^{n-1}:\,\arg\max_{\vec{y}\in C}\left\langle\vec{s},\vec{y}\right\rangle \in C_{SE}\right\}
\end{equation*}
has full measure and is therefore everywhere dense in $\Sphere^{n-1}$. Since $\closure({\mathcal S})=\Sphere^{n-1}$ is separable, there exists furthermore a countable subset $\{\vec{s}_i:\,i\in\N\}\subset{\mathcal S}$, also everywhere dense in $\Sphere^{n-1}$, that can be used in the construction. 
\end{proof}

\begin{theorem}\label{222} 
Let $C\subset\R^n$ be nonempty compact convex. The for all $\epsilon>0$ there exist finitely many points of strict curvature $\vec{x}_1,\dots,\vec{x}_k\in C_{SE}$ such that  
\begin{equation*}
\max_{\vec{x}\in C}d\left(\vec{x},\conv\left(\vec{x}_1,\dots,\vec{x}_k\right)\right)\leq\epsilon.
\end{equation*}
\end{theorem}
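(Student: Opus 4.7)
The plan is to combine Minkowski's theorem on extreme points with Proposition~\ref{lala d)}~b) and a standard compactness argument to produce a finite inner polytope supported on $C_{SE}$.

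First, I would observe that since $C$ is compact convex, the Minkowski/Krein--Milman theorem gives $C=\conv(C_E)$. Combined with the inclusion $C_E\subseteq\closure(C_{SE})$ from Proposition~\ref{lala d)}~b), this yields
\begin{equation*}
C=\conv\left(C_E\right)\subseteq\conv\left(\closure\left(C_{SE}\right)\right)\subseteq\closure\left(\conv\left(C_{SE}\right)\right)\subseteq\closure(C)=C,
\end{equation*}
so in fact $\conv(C_{SE})$ is dense in $C$.

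Next, by compactness of $C$, I would extract a finite $(\epsilon/2)$--net $\vec{y}_1,\dots,\vec{y}_m\in C$. For each $j$, the density just established supplies a point $\vec{z}_j\in\conv(C_{SE})$ with $\|\vec{y}_j-\vec{z}_j\|\leq\epsilon/2$, and the Carath\'eodory--Steinitz theorem writes each $\vec{z}_j$ as a convex combination of at most $\dim(C)+1$ points drawn from $C_{SE}$. Taking $\{\vec{x}_1,\dots,\vec{x}_k\}$ to be the union over $j$ of these finitely many Carath\'eodory representers produces a finite subset of $C_{SE}$ whose convex hull contains every $\vec{z}_j$.

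Finally, the triangle inequality closes the argument: for any $\vec{x}\in C$, choosing a nearest net point $\vec{y}_j$ gives
\begin{equation*}
d\left(\vec{x},\conv\left(\vec{x}_1,\dots,\vec{x}_k\right)\right)\leq\left\|\vec{x}-\vec{y}_j\right\|+\left\|\vec{y}_j-\vec{z}_j\right\|\leq\epsilon.
\end{equation*}
The technical ingredients used here (Minkowski, Carath\'eodory, finite covers of compacts) are entirely standard, so the only substantive obstacle is already absorbed into Proposition~\ref{lala d)}~b), which is what licenses approximating extreme points by points of strict curvature. Once that inclusion is in hand, the theorem follows almost mechanically.
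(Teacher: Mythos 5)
Your proof is correct, and it rests on the same two load-bearing facts as the paper's argument — the inclusion $C_E \subseteq \closure(C_{SE})$ from Proposition~\ref{lala d)}~b), and the Minkowski/Carath\'eodory machinery — but the assembly is genuinely different. The paper builds its $\epsilon$-net directly on $C_E$: it extracts a finite $\epsilon/2$-cover of the compact set $\closure(C_{SE})$, pushes each center into $C_{SE}$, and only then applies Carath\'eodory to an \emph{arbitrary} point $\vec{x}\in C$, approximating each of its extreme-point representers by a net point. You instead first package the key step as a density statement, $\closure(\conv(C_{SE})) = C$, then take an $\epsilon/2$-net of $C$ itself, approximate each net point by a point of $\conv(C_{SE})$, and apply Carath\'eodory--Steinitz only to these \emph{finitely many} approximants. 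Your chain of inclusions $C = \conv(C_E) \subseteq \conv(\closure(C_{SE})) \subseteq \closure(\conv(C_{SE})) \subseteq C$ is valid (the middle step holds because a closed convex superset of $A$ contains $\conv(\closure(A))$), and your closing triangle inequality is correct since $\vec{z}_j \in \conv(\vec{x}_1,\dots,\vec{x}_k)$. The trade-offs are minor: the paper's route keeps the net on the potentially smaller set $C_E$ and needs Carath\'eodory for every $\vec{x}\in C$, whereas yours applies Carath\'eodory only finitely often and gives a transparent cardinality bound $k \leq m(\dim(C)+1)$ where $m$ is the size of the net on $C$, at the cost of covering all of $C$ rather than just its extreme points. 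Both are sound; yours is arguably the more mechanical once the density observation is in hand, which is the spirit you intended.
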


\begin{proof}
Let $\{\vec{x}_1,\dots,\vec{x}_k\}\subset C_{SE}$ be an $\epsilon$-net on the set $C_E$ of extreme points of $C$, that is, $\vec{x}_i$ are chosen so that 
\begin{equation*}
\min_i\left\|\vec{z}-\vec{x}_i\right\|\leq\epsilon
\end{equation*}
for all $\vec{z}\in C_E$. The existence of such an $\epsilon$-net is established as follows: $C$ being compact, $\closure(C_{SE})$ is a compact set too, and by the Heine-Borel Theorem, we can extract a finite covering by Euclidean balls of radius $\epsilon/2$ around points $\vec{y}_i\in\closure(C_{SE})$, which by Proposition \ref{lala d)} b) is also a covering of $C_E$, 
\begin{equation*}
\bigcup_{i=1}^k \B_{\epsilon/2}\left(\vec{y}_i\right)\supset C_{E}. 
\end{equation*}
Next, for all $i$ choose $\vec{x}_i\in C_{SE}$ within distance $\epsilon/2$ of $\vec{y}_i$. It then follows from the triangular inequality that $\{\vec{x}_1,\dots,\vec{x}_k\}$ is the required $\epsilon$-net. 

By the Theorems of Minkowski\footnote{This theorem says that a convex compact set in $\R^n$ is equal to the convex hull of its extreme points. The generalization of this result to arbitrary topological vector spaces is the Krein-Milman Theorem \cite{krein, milman}.} \cite{minkowski} and Carath\'eodory\footnote{This result says that if $K=\conv(X)$ for some $X\subset\R^n$, then every point in $K$ is a convex combination of at most $n+1$ elements of $X$.} \cite{caratheodory}, any point $\vec{x}\in C$ can be written as a convex combination $\vec{x}=\xi_1\vec{z}_1+\dots+\xi_m\vec{z}_m$ of $m\leq n+1$ extreme points $z_j\in C_E$, and by construction of the $\epsilon$-net, it is then possible to choose $1\leq i_j\leq k$ such that $\|\vec{z}_j-\vec{x}_{i_j}\|\leq\epsilon$ for all $j$. Using the triangular inequality once again, we find that 
\begin{align*}
d\left(\vec{x}, \conv\left(\vec{x}_1,\dots,\vec{x}_k\right)\right)&\leq
d\left(\vec{x}, \conv\left(\vec{x}_{i_1},\dots,\vec{x}_{i_m}\right)\right)\\
&\leq
d\left(\xi_1\vec{z}_1+\dots+\xi_m\vec{z}_m, \xi_1\vec{x}_{i_1}+\dots+\xi_{m}\vec{x}_{i_m}\right)\leq\epsilon,
\end{align*}
as claimed.
\end{proof}

The next result can be seen as an intuitive template for Theorem \ref{convergence2SET} free of large deviations complications.

\begin{theorem}\label{convex1}
Let $C$ be a nonempty convex compact subset of $\R^n$ with dual description \eqref{finitehalfspace}, and let $C^1,C^2,\dots$ be convex compact subsets of $\R^n$ such that for all linear functionals $f:\R^n\rightarrow\R$ it is true that 
\begin{equation}\label{assumption assumption}
\max_{\vec{p}\in C^n}f\left(\vec{p}\right)\stackrel{n\rightarrow\infty}{\longrightarrow}
\max_{\vec{p}\in C}f\left(\vec{p}\right).
\end{equation}
Then $d(C^n, C)\rightarrow 0$. 
\end{theorem}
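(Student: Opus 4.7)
The plan is to establish the two one-sided Hausdorff bounds $\sup_{\vec{x}\in C^n}d(\vec{x},C)\to 0$ and $\sup_{\vec{x}\in C}d(\vec{x},C^n)\to 0$ separately. Both exploit the hypothesis \eqref{assumption assumption} that the support functionals of $C^n$ converge pointwise to those of $C$: for the first bound I would approximate $C$ from outside by finitely many supporting half-spaces via Lemma \ref{finitehalfspace} b); for the second I would use the inner approximation by points of strict curvature from Theorem \ref{222}, combined with the polyhedral localization in Proposition \ref{lala d)} d) and the convexity of $C^n$. The overall structure mirrors that of the proof of Theorem \ref{convergence2SET}, minus the large deviations machinery.

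For the first bound, I would fix $\epsilon>0$ and apply Lemma \ref{finitehalfspace} b) to choose finitely many unit vectors $\vec{s}_1,\dots,\vec{s}_k$ whose half-spaces $H_{\vec{s}_i}=\{\vec{x}:\langle\vec{s}_i,\vec{x}\rangle\leq\lambda(\vec{s}_i)\}$ satisfy $\bigcap_i H_{\vec{s}_i}\subset\{\vec{x}:d(\vec{x},C)\leq\epsilon\}$, augmenting the list with coordinate directions $\pm\vec{e}_j$ if necessary to ensure boundedness of the intersection. Hypothesis \eqref{assumption assumption} applied to each linear functional $\vec{x}\mapsto\langle\vec{s}_i,\vec{x}\rangle$ then yields $\delta_n\to 0$ such that $C^n\subset\bigcap_i\{\vec{x}:\langle\vec{s}_i,\vec{x}\rangle\leq\lambda(\vec{s}_i)+\delta_n\}$ eventually. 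Since the intersection of a fixed finite system of half-spaces depends Lipschitz-continuously on its right-hand sides within a bounded region, this enlarged polyhedron lies within distance $\epsilon+o(1)$ of $C$, and letting first $n\to\infty$ and then $\epsilon\to 0$ closes the argument.

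For the second bound, I would fix $\epsilon>0$ and invoke Theorem \ref{222} to obtain finitely many points of strict curvature $\vec{x}_1,\dots,\vec{x}_m\in C_{SE}$ whose convex hull approximates $C$ to within $\epsilon$. For each $\vec{x}_i$, Proposition \ref{lala d)} d) furnishes a witness direction $\vec{s}_0^{(i)}\in\Nor_{\vec{x}_i}C\cap\Sphere^{n-1}$ together with finitely many auxiliary pairs $(\vec{x}_j^{(i)},\vec{s}_j^{(i)})$ defining a family of compact cages $C^{(i)}(\xi_0,\dots,\xi_{k_i})$ with $C^{(i)}(0,\dots,0)\subset\B_{\epsilon}(\vec{x}_i)$. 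Choosing $\vec{p}_n^{(i)}\in\arg\max_{\vec{p}\in C^n}\langle\vec{s}_0^{(i)},\vec{p}\rangle$ and applying \eqref{assumption assumption} to each of the directions $\vec{s}_0^{(i)},\vec{s}_1^{(i)},\dots,\vec{s}_{k_i}^{(i)}$, I find that $\vec{p}_n^{(i)}\in C^{(i)}(\xi_0^n,\dots,\xi_{k_i}^n)$ for some sequence of parameters tending to zero. Upper semicontinuity of $\xi\mapsto C^{(i)}(\xi)$ at $\xi=0$, a consequence of compactness and the continuity of inner products, then forces $\vec{p}_n^{(i)}\in\B_{2\epsilon}(\vec{x}_i)$ for $n$ large. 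Convexity of $C^n$ yields $\conv(\vec{p}_n^{(1)},\dots,\vec{p}_n^{(m)})\subset C^n$, and the triangle inequality delivers $\sup_{\vec{x}\in C}d(\vec{x},C^n)\leq 3\epsilon$ for all sufficiently large $n$.

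The main obstacle will be the second direction, specifically the localization of $\vec{p}_n^{(i)}\in C^n$ near the target $\vec{x}_i$: pointwise convergence of the support functional at $\vec{s}_0^{(i)}$ alone is insufficient, because the supporting hyperplane at $\vec{x}_i$ could bound a long flat face in some approximating set. The cage from Proposition \ref{lala d)} d), which transforms strict curvature into a finite system of linear constraints confining every near-optimal point, is precisely the tool that circumvents this difficulty; the construction is the deterministic analogue of the joint event $\mathscr{B}_{n,j}\cap\mathscr{C}_{n,i}$ used in the proof of Theorem \ref{convergence2SET}.
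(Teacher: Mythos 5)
Your proposal is correct and follows essentially the same route as the paper's proof: an outer bound using the finite half-space approximation of Lemma~\ref{finitehalfspace}~b), and an inner bound using the strict-curvature net of Theorem~\ref{222} together with the compact cages from Proposition~\ref{lala d)}~d), with the support-functional hypothesis~\eqref{assumption assumption} doing the work in both directions. The only minor divergences are cosmetic: the paper closes the outer bound by a scaling trick ($C^n\subseteq(1+\epsilon)C_\epsilon$ after normalizing $\{0\}\in C\subset\B_1(0)$) rather than by invoking Lipschitz dependence of a bounded polyhedron on its right-hand sides, and you make explicit the role of convexity of $C^n$ when passing from the points $\vec{p}_n^{(i)}$ to their convex hull inside $C^n$, a step the paper leaves implicit.
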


\begin{proof}
Upon translation and rescaling we may assume without loss of generality that $\{0\}\in C\subset\B_{1}(0)$. Let $\epsilon>0$ be a given small number. By the dual description of $C$ and Lemma \ref{finitehalfspace} there exist finitely many vectors $\vec{s}_1,\dots,\vec{s}_k\in\Sphere^{n-1}$ such that 
\begin{equation}\label{salami}
\sup_{\vec{x}\in C_{\epsilon}}\,d\left(\vec{x},C\right)<\epsilon, 
\end{equation}
where $C_{\epsilon}=\cap_{i=1}^{k}\left\{\vec{x}:\,\left\langle\vec{s}_i,\vec{x}\right\rangle\leq\lambda_{s_i}\right\}$ and $\lambda_s=\arg\max_{\vec{x}\in C}\left\langle\vec{s}, \vec{x}\right\rangle\geq 0$. Because of Assumption \eqref{assumption assumption}, there exists $n_0\in\N$ such that for all $n\geq n_0$, 
\begin{equation*}
\max_{\vec{x}\in C^n}\left\langle\vec{s}_i, \vec{x}\right\rangle\leq(1+\epsilon)\lambda_{\vec{s}_i},\quad
(i=1,\dots,k), 
\end{equation*}
and hence, $C^n\subseteq(1+\epsilon)C_{\epsilon}$. Therefore, for all $n\geq n_0$, 
\begin{align}
\sup_{\vec{x}\in C^n}\,d(\vec{x},C)&\leq\sup_{\vec{x}\in(1+\epsilon)C_{\epsilon}}\,d(\vec{x},C)\nonumber\\
&=(1+\epsilon)\sup_{\vec{x}\in C_{\epsilon}}\,d\left(\vec{x},\frac{1}{1+\epsilon}C\right)\nonumber\\
&\leq(1+\epsilon)\left(\sup_{\vec{x}\in C_{\epsilon}}\,d\left(\vec{x},C\right)+\sup_{\vec{x}\in C}\,d\left(\vec{x},\frac{1}{1+\epsilon}C\right)\right)\nonumber\\
&\leq 2(1+\epsilon)\epsilon\label{first bound}
\end{align}
where we used \eqref{salami} and $d(C,1/(1+\epsilon)C_{\epsilon})<\epsilon$ to arrive at the last inequality. 

Next, let us choose points $\vec{x}_1,\dots,\vec{x}_k$ as in Theorem \ref{222}. Proposition \ref{lala d)} allows us to choose scoring functions $S_{\vec{x}_i}$ and $S_1,\dots,S_{\ell}$ such that $\vec{x}_i\in C_i\subset\B_{\epsilon}(\vec{x}_i)$ and $C_i^j$ is compact for all $j\in\N$, where 
\begin{align*}
C_i&:=\left\{\vec{x}:\,f_{S_{\vec{x}_i}}\left(\vec{x}\right)\geq\max_{\vec{x}\in C}f_{S_{\vec{x}_i}}\left(\vec{x}\right),\, f_{S_j}\left(\vec{x}\right)\leq\max_{\vec{x}\in C}f_{S_j}\left(\vec{x}\right), (j=1,\dots,\ell)\right\},\\
C_i^j&:=\left\{\vec{x}:\,f_{S_{\vec{x}_i}}\left(\vec{x}\right)\geq\max_{\vec{x}\in C^j}f_{S_{\vec{x}_i}}\left(\vec{x}\right),\, f_{S_j}\left(\vec{x}\right)\leq\max_{\vec{x}\in C^j}f_{S_j}\left(\vec{x}\right), (j=1,\dots,\ell)\right\}.
\end{align*}
Taking $\vec{x}_{ij}$ to be a maximizer of $\arg\max_{\vec{x}\in C^j}f_{S_{\vec{x}}}$, we find  $\vec{x}_{ij}\in C^j\cap C_i^j\neq\emptyset$, and by \eqref{assumption assumption} applied to $f_{S_{\vec{x}_i}}$ and $f_{S_j}$ $(j=1,\dots,k)$ and the continuity of the Hausdorff distance, we have 
\begin{equation*}
\limsup_{n\rightarrow\infty}\,\left\|\vec{x}_i-\vec{x}_{ij}\right\|\leq\epsilon. 
\end{equation*}
By the triangular inequality, this implies 
\begin{equation*}
\limsup_{n\rightarrow\infty} \max_{\vec{x}\in C}d(\vec{x}, C^n)\leq 2\epsilon,\quad
\forall\,\epsilon>0, 
\end{equation*}
and since this is true for all $\epsilon$ rational, the Theorem follows. 
\end{proof}

The final result shows among other things that all results presented above are all applicable to $SET$, defined as in \eqref{SET}. Recall the notations $\lambda(S)$ for the Chv\`atal-Sankoff limit \eqref{chvatal} of a scoring function $S$, and $f_S$, the linear functional associated with $S$ as defined in \eqref{defined in}. 

\begin{lemma}\label{lala}\hspace{1cm}
\begin{itemize}
\item[a) ] The function $S\mapsto\lambda(S)$ is continuous. 
\item[b) ] $SET$ is nonempty convex compact. 
\item[c) ] For every $\vec{x}\in\partial SET$ there exists $S_{\vec{x}}\neq 0$ such that $\{\vec{y}:\,f_{S_{\vec{x}}}(\vec{y})=\lambda(S_{\vec{x}})\}$ is a tangent plane to $SET$ supported at $\vec{x}$. 
\item[d) ] $\max_{\vec{x}\in SET}f_S\left(\vec{x}\right)=\lambda(S)$ holds true for all scoring functions $S$. 
\end{itemize}
\end{lemma}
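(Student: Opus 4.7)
The four parts are naturally coupled. I would prove them in the order (a) $\Rightarrow$ (b) $\Rightarrow$ (c) $\Rightarrow$ (d), since (a) is needed to apply Lemma \ref{finitehalfspace} to $SET$ in (c), and both (b) and (d) combine compactness with the pointwise almost sure convergence $L_n(S)/n\to\lambda(S)$ from \eqref{convergence of lambda}. For part (a), the key observation is that any gapped alignment of two length-$n$ strings has at most $2n$ columns; hence for every alignment $\pi$ and any two scoring functions $S,S'$ one has $|S_\pi(x,y)-S'_\pi(x,y)|\leq 2n\,\|S-S'\|_\infty$, where $\|S-S'\|_\infty:=\max_{\mathfrak{c,d}\in\mathcal{A}^*}|S(\mathfrak{c,d})-S'(\mathfrak{c,d})|$. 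Maximising over $\pi$, taking expectations, dividing by $n$ and letting $n\to\infty$ via \eqref{chvatal} yields $|\lambda(S)-\lambda(S')|\leq 2\|S-S'\|_\infty$. For (b), convexity and closedness are immediate from the definition of $SET$ as an intersection of closed half-spaces. For boundedness, for each pair $(\mathfrak{a,b})$ I consider the two scoring functions $S^\pm_{\mathfrak{a,b}}$ supported only on $(\mathfrak{a,b})$ with values $\pm 1$: these satisfy $\lambda(S^+_{\mathfrak{a,b}})\leq 1$ trivially and $\lambda(S^-_{\mathfrak{a,b}})\geq 0$ since the all-gap alignment achieves score $0$ under $S^-_{\mathfrak{a,b}}$, so that $H(S^+_{\mathfrak{a,b}})\cap H(S^-_{\mathfrak{a,b}})$ traps every coordinate $x_{\mathfrak{a,b}}$ in a bounded interval. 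Nonemptiness follows by fixing a countable dense family $\{S^{(k)}\}_{k\in\N}$ of scoring functions, working on the almost sure event where $L_n(S^{(k)})/n\to\lambda(S^{(k)})$ for every $k$, picking any $\vec{p}^n\in SET^n$, extracting a convergent subsequence $\vec{p}^{n_j}\to\vec{p}$ by boundedness, and passing to the limit in $f_{S^{(k)}}(\vec{p}^{n_j})\leq L_{n_j}(S^{(k)})/n_j$; continuity from (a) then extends the inequality $f_S(\vec{p})\leq\lambda(S)$ to every $S$, proving $\vec{p}\in SET$.

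For (c), I would apply Lemma \ref{finitehalfspace}(c) to $SET$. Since $L_n(cS)=cL_n(S)$ for $c\geq 0$, positive homogeneity $\lambda(cS)=c\lambda(S)$ follows from \eqref{chvatal}, and I can reparametrise
\begin{equation*}
SET=\bigcap_{\vec{s}\in\Sphere^{|\mathcal{A}^*|^2-1}}\{\vec{x}:\langle\vec{s},\vec{x}\rangle\leq\lambda(\vec{s})\},
\end{equation*}
where $\lambda(\vec{s})$ denotes the Chv\'atal--Sankoff constant of the scoring function with unit-norm vector representation $\vec{s}$; this $\vec{s}\mapsto\lambda(\vec{s})$ is continuous on the sphere by (a). Lemma \ref{finitehalfspace}(c) then supplies, at any $\vec{x}\in\partial SET$, a unit vector $\vec{s}$ with $\langle\vec{s},\vec{x}\rangle=\lambda(\vec{s})$, and the corresponding scoring function is the required nonzero $S_{\vec{x}}$. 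For (d), $\max_{\vec{x}\in SET}f_S(\vec{x})\leq\lambda(S)$ is immediate from $SET\subseteq H(S)$. For the reverse inequality I pick $\vec{p}^n\in\arg\max_{\vec{p}\in SET^n}f_S(\vec{p})$; by \eqref{AAA}, $f_S(\vec{p}^n)=L_n(S)/n\to\lambda(S)$ almost surely. Running the argument of (b) on the a.s.\ event where this convergence and that for the countable dense family hold simultaneously, I extract a subsequential limit $\vec{p}\in SET$ with $f_S(\vec{p})=\lambda(S)$, finishing the proof.

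I expect the only nontrivial technical point to be bookkeeping for the almost sure statements: the definition of $SET$ quantifies over the uncountable family of scoring functions, whereas \eqref{convergence of lambda} is an a.s.\ statement for each fixed $S$ separately. Restricting to a countable dense family to choose a single good event, and then extending the limiting inequality to all $S$ by continuity using (a), handles this cleanly. Beyond that, the lemma is essentially a repackaging of Lemma \ref{finitehalfspace} together with the Chv\'atal--Sankoff convergence, so no deeper obstacle is anticipated.
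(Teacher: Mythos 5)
Your proposal is correct and for parts~(a) and~(c) follows essentially the same route as the paper: the Lipschitz estimate $|\lambda(S)-\lambda(S')|\leq 2\|S-S'\|_\infty$ from the at-most-$2n$-columns observation, and then an appeal to Lemma~\ref{finitehalfspace} after reparametrising the dual description over unit vectors via the positive homogeneity $\lambda(\tau S)=\tau\lambda(S)$. The notable differences are in parts~(b) and~(d). For nonemptiness in~(b), the paper produces a \emph{deterministic} witness directly: it takes the all-gap alignment $\pi_n$ (every $X_i$ and every $Y_j$ matched to a gap), computes the expected empirical distribution vector $\vec{x}$ explicitly, and then observes $f_S(\vec{x})=\E[S_{\pi_n}(X,Y)]/n\leq\lambda_n(S)\leq\lambda(S)$ for every $S$ by suboptimality and \eqref{increasing}; no subsequences, no a.s.\ events, no dense family. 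Your abstract argument (pick $\vec{p}^n\in SET^n$, extract a convergent subsequence on a good a.s.\ event, pass to the limit against a countable dense family, then extend by continuity from~(a)) also works, but it carries along exactly the bookkeeping you flag at the end, which the paper's explicit construction sidesteps entirely. For~(d), the paper argues by contradiction: if the hyperplane $\mathcal{T}=\{f_S=\lambda(S)\}$ missed $SET$, then by the polyhedral approximation $G_j$ of Lemma~\ref{finitehalfspace}~b) it would miss some $G_j$, hence a thickened version $\mathcal{T}^\delta\cap G_j^\delta=\emptyset$, contradicting that $SET^n$ eventually lies in $G_j^\delta$ while $\max_{SET^n}f_S\geq\lambda(S)-\delta$. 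Your direct argument --- take $\vec{p}^n$ a maximiser, so $f_S(\vec{p}^n)=L_n(S)/n\to\lambda(S)$ by~\eqref{AAA}, and feed it through the same subsequential-limit-into-$SET$ machinery as~(b) --- is equally valid and arguably slightly more transparent, though the two are of comparable length once the dense-family bookkeeping is spelled out. In short: correct proof, same skeleton, but the paper's treatment of nonemptiness is more elementary, and your treatment of~(d) avoids the contradiction.
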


\begin{proof}
a) It suffices to show that for any two scoring functions $S$ and $T$ the following inequality holds true,  
\begin{equation*}
\lambda_n(S)-\lambda_n(T)\leq 2\left\|T-S\right\|_\infty. 
\end{equation*}
For any pair of letters $\mathfrak{c,d}\in\mathcal{A}^*$ we have 
\begin{equation*}
|S(\mathfrak{c,d})-T(\mathfrak{c,d})|\leq\|S-T\|_\infty=\max_{\mathfrak{a,b}\in{\mathcal A}^*}
\left|S(\mathfrak{a,b})-T(\mathfrak{a,b})\right|. 
\end{equation*}
Further, since no optimal alignment of two strings of length $n$ contains any aligned pair of gaps, there are at most $2n$ aligned letter pairs, so that the triangular inequality implies 
\begin{equation*}
\left|L_n(S)-L_n(T)\right|\leq 2n\|S-T\|_\infty. 
\end{equation*}
The claim now follows by dividing by $n$ and taking expectations. Lemma \ref{finitehalfspace} now applies because $SET$ has the dual description \eqref{SET}, where $S$ can be restricted to the case where $f_S$ is a unit vector, since for all $\tau>0$ and pairs of strings $(x,y)$, we have $(\tau S)_{\pi}(x,y)=\tau S_\pi(x,y)$, whence $\lambda(\tau S)=\tau\lambda(S)$ and $H(\tau S)=H(S)$. 

b) By Part a), Lemma \ref{finitehalfspace} is applicable to $C=SET$. This shows that $SET$ is convex compact. It remains to prove that it contains at least one point. Consider the random sequences $X,Y$ introduced in Section \ref{overview}, and for all $n\in\N$ let the first $n$ letters be aligned by $\pi_n$, defined as follows,  
\begin{equation*}
\begin{array}{c||c|c|c|c|c|c|c|c}
X&X_1&\mathfrak{G}&X_2&\mathfrak{G}&X_3&\dots&X_n&\mathfrak{G}\\\hline
Y&\mathfrak{G}&Y_1&\mathfrak{G}&Y_2&\mathfrak{G}&\dots&\mathfrak{G}&Y_n
\end{array},
\end{equation*}
where $\mathfrak{G}$ denotes a gap. For all $n\in\N$, the expected empirical distribution of aligned letters is given by the vector $\vec{x}$ with entries defined as follows, 
\begin{equation*}
x_{\mathfrak{a,b}}=\begin{cases}\prob[X_1=\mathfrak{a}],\quad&\text{if }\mathfrak{a}\neq\mathfrak{G}, \mathfrak{b=G},\\
\prob[Y_1=\mathfrak{b}]\quad&\text{if }\mathfrak{a}=\mathfrak{G}, \mathfrak{b}\neq\mathfrak{G},\\
0\quad&\text{otherwise}. 
\end{cases}
\end{equation*}
For every scoring function $S$ this is a legitimate, albeit suboptimal, alignment. Therefore, we have 
\begin{equation*}
f_S\left(\vec{x}\right)=\frac{1}{n}\expect\left[S_{\pi_n}(X,Y)\right]
\stackrel{\eqref{new old}}{\leq}\frac{1}{n}\expect\left[L_n(S)\right]
\stackrel{\eqref{increasing}}{\leq}\lambda(S).
\end{equation*}
By \eqref{SET}, this shows that $\vec{x}\in SET$.  

c) This follows from Lemma \ref{finitehalfspace}, which is applicable by Part a). 

d) Note that it follows from \eqref{H} and \eqref{SET} that $f(\vec{x})\leq\lambda(S)$ for all $\vec{x}$. It suffices thus to show that the hyperplane 
\begin{equation*}
{\mathcal T}:=\left\{\vec{x}:\,f_S\left(\vec{x}\right)=\lambda(S)\right\}
\end{equation*}
has nonempty intersection with $SET$. Assuming the contrary, and using the construction of \eqref{countability} and \eqref{GH}, there exists $j\in\N$ such that ${\mathcal T}\cap G_j=\emptyset$. By continuity, there then exists $\delta>0$ such that 
\begin{equation}\label{absurdity}
{\mathcal T}^{\delta}\cap G^{\delta}_j=\emptyset, 
\end{equation}
where 
\begin{align*}
{\mathcal T}^{\delta}&:=\left\{\vec{x}:\,f_S\left(\vec{x}\right)\geq\lambda(S)-\delta\right\},\\
G^{\delta}_j&:=\bigcap_{i=1}^j H^{\delta}\left(S_i\right),\\
H^{\delta}\left(S_i\right)&:=\left\{\vec{x}:\,f_{S_i}\left(\vec{x}\right)\leq\lambda\left(S_i\right)+\delta\right\}. 
\end{align*}
By \eqref{AAA} and the almost sure convergence of $L_n(S_i)/n$ to $\lambda(S_i)$, there exists almost surely $n_0\in\N$ such that for all $n\geq n_0$,
\begin{equation*}
\max_{\vec{x}\in SET^n}\,f_{S_i}\left(\vec{x}\right)\leq\lambda\left(S_i\right)+\delta,\quad (i=1,\dots,j), 
\end{equation*}
so that $SET^n\subset G^{\delta}_j$, and further, 
\begin{equation*}
\max_{\vec{x}\in SET^n}\,f_S\left(\vec{x}\right)\geq\lambda(S)-\delta, 
\end{equation*}
so that $\emptyset\neq {\mathcal T}^{\delta}\cap SET^n\subset{\mathcal T}^{\delta}\cap G^{\delta}_j$. Since this is in direct contradiction with \eqref{absurdity}, the claim holds true. 
\end{proof}

\section{Appendix: Large Deviations}\label{appendix}

Recall the notations $L_S(x_1\dots x_i,y_1\dots y_j)$, $L_n(S)=L_S(X_1\dots X_n, Y_1\dots Y_n)$ and $\lambda_n(S)=\expect[L_n(S)]/n$ introduced in Section \ref{overview}, and the fact that $\lambda_n(S)\rightarrow\lambda(S)$ mentioned in \eqref{chvatal}. In this appendix we will show a stronger result that quantifies the convergence rate as being of order $\Bigo(\sqrt{\ln n/n})$. For this purpose, we introduce the following notation, 
\begin{align*}
\|S\|_{\delta}&=\max_{{\mathfrak c},{\mathfrak d},{\mathfrak e}\in{\mathcal A}^*}\left|S\left({\mathfrak c},{\mathfrak d}\right)-S\left({\mathfrak c},{\mathfrak e}\right)\right|,\\
\|S\|_{\infty}&=\max_{{\mathfrak c},{\mathfrak d}\in{\mathcal A}^*}\left|S\left({\mathfrak c},{\mathfrak d}\right)\right|,
\end{align*}

\begin{lemma}\label{change}
Let $x=x_1\dots x_m$ and $y=y_1\dots y_n$ be two given strings with letters from the alphabet ${\mathcal A}$, and let $S$ be a given scoring function. Let further $\hat{x}\in{\mathcal A}$, and consider two amendments of string $x$, $x^{[i]}=x_1\dots x_{i-1}\,\hat{x}\,x_{i+1}\dots x_m$, obtained by replacing an arbitrary letter $x_i$ by $\hat{x}$, and $x^{[+]}=x_1\dots x_m\,\hat{x}$, obtained by extending $x$ by a letter $\hat{x}$. Then the following hold true, 
\begin{align}
\left|L_S(x^{[i]},y)-L_S(x,y)\right|&\leq\|S\|_{\delta},\label{first claim}\\
\left|L_S(x^{[+]},y)-L_S(x,y)\right|&\leq\|S\|_{\infty}.\label{second claim}
\end{align}
\end{lemma}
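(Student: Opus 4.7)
The plan is to prove both bounds by explicit alignment surgery: from an optimal alignment of one pair of strings I will construct a legitimate (though not necessarily optimal) alignment of the other pair by altering a single column of the tableau, and then compare scores. Applying the argument in both directions delivers each absolute-value bound.

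For \eqref{first claim}, I will take any optimal alignment $\pi^*$ of $x^{[i]}$ with $y$ and identify the unique symbol $\mathfrak{d}\in{\mathcal A}^*$ that is aligned with the letter $\hat{x}$ at position $i$. Overwriting $\hat{x}$ by $x_i$ in the same column layout produces a legitimate alignment $\pi$ of $x$ with $y$; only the column containing $\hat{x}$ is affected, and it now contributes $S(x_i,\mathfrak{d})$ in place of $S(\hat{x},\mathfrak{d})$. Hence
\begin{equation*}
L_S(x,y)\geq S_\pi(x,y)=L_S(x^{[i]},y)-S(\hat{x},\mathfrak{d})+S(x_i,\mathfrak{d}).
\end{equation*}
The symmetric construction starting from an optimal alignment of $x$ with $y$ and overwriting $x_i$ by $\hat{x}$ yields the reverse inequality. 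Each of the resulting score differences fixes one slot of $S$ and varies the other, and is therefore controlled by $\|S\|_\delta$.

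For \eqref{second claim}, appending the column $(\hat{x},\mathfrak{G})$ to any optimal alignment of $x$ with $y$ gives a legitimate alignment of $x^{[+]}$ with $y$, and hence the lower bound $L_S(x^{[+]},y)\geq L_S(x,y)+S(\hat{x},\mathfrak{G})\geq L_S(x,y)-\|S\|_\infty$. For the upper bound I will take an optimal alignment $\pi^*$ of $x^{[+]}$ with $y$ and examine the unique column $T$ containing the appended letter $\hat{x}$. If $T=(\hat{x},\mathfrak{G})$, deleting $T$ leaves a legitimate alignment of $x$ with $y$ of score $L_S(x^{[+]},y)-S(\hat{x},\mathfrak{G})$. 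If instead $T=(\hat{x},y_j)$ for some $j$, I will replace $T$ by the single column $(\mathfrak{G},y_j)$, which leaves the two rows reading $x$ and $y$ respectively and changes the score by $S(\mathfrak{G},y_j)-S(\hat{x},y_j)$. In either sub-case the change is controlled by the sup norm $\|S\|_\infty$.

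The essential thing to check --- and the main obstacle --- is that each column-surgery leaves a valid tableau: the row strings must remain $x$ and $y$ in the correct order, and no column may become all-gaps. Both properties are immediate, because each surgery alters a single column of an existing valid tableau and the altered column always contains at least one letter. Since the score differences involve at most one pair-scoring term, they are uniformly bounded by a constant depending only on $S$, which is exactly what the bounded-differences form of Azuma--Hoeffding needs when these estimates are applied later.
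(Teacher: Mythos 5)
Your proof of \eqref{first claim} is correct and is essentially the same as the paper's: a single-column overwrite on an optimal alignment, bounded by $\|S\|_\delta$ and run in both directions. (You start from an optimal alignment of $(x^{[i]},y)$, the paper from one of $(x,y)$; this is cosmetic.)

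Your treatment of \eqref{second claim} contains a real gap, and interestingly your more careful case distinction actually exposes a flaw in the paper's own proof. The lower bound, obtained by appending the column $(\hat{x},\mathfrak{G})$, is fine. For the upper bound, consider your second sub-case $T=(\hat{x},y_j)$: replacing $T$ by $(\mathfrak{G},y_j)$ changes the score by $S(\mathfrak{G},y_j)-S(\hat{x},y_j)$, and you assert that this is controlled by $\|S\|_\infty$. It is not: in general one only has $|S(\mathfrak{G},y_j)-S(\hat{x},y_j)|\leq 2\|S\|_\infty$, and this factor of two cannot be removed. Indeed the bound \eqref{second claim} as stated is false. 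Take $\mathcal{A}=\{\mathfrak{a}\}$, $S(\mathfrak{a},\mathfrak{a})=1$, $S(\mathfrak{a},\mathfrak{G})=S(\mathfrak{G},\mathfrak{a})=-1$, $x=\mathfrak{a}$, $\hat{x}=\mathfrak{a}$, $y=\mathfrak{a}\mathfrak{a}$: then $\|S\|_\infty=1$, $L_S(x,y)=0$, but $L_S(x^{[+]},y)=2$, so $|L_S(x^{[+]},y)-L_S(x,y)|=2>\|S\|_\infty$. The paper's own proof writes $S_{\tilde{\pi}}(x,y)=S_{\tilde{\pi}^{[+]}}(x^{[+]},y)-S(\hat{x},\mathfrak{a})$ as though one could always delete the column containing $\hat{x}$; but when $\mathfrak{a}\neq\mathfrak{G}$ deleting the column also removes a letter of $y$, so one must instead substitute a gap for $\hat{x}$, contributing the extra $S(\mathfrak{G},\mathfrak{a})$ term which the paper silently omits and which your case analysis correctly reinstates. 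The correct right-hand side of \eqref{second claim} is $2\|S\|_\infty$. Fortunately this does not affect anything downstream: the McDiarmid estimate in Theorem \ref{Azuma etc} uses only \eqref{first claim} with constant $\|S\|_\delta$, and the remaining uses only require boundedness, not the sharp constant.
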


\begin{proof}
Let $\pi$ be an optimal alignment of $x$ and $y$, so that $S_{\pi}(x,y)=L_S(x,y)$, and denote the letter with which $x_i$ is aligned under $\pi$ by ${\mathfrak a}\in{\mathcal A}^*$. Then 
\begin{equation*}
L_S(x^{[i]},y)\geq S_{\pi}(x^{[i]},y)=S_{\pi}(x,y)-S(x_i,{\mathfrak a})+S(\hat{x},{\mathfrak a})
\geq L_S(x,y)-\|S\|_{\delta}. 
\end{equation*}
Applying the identical argument to an optimal alignment of $x^{[i]}$ and $y$, we obtain the analogous inequality 
\begin{equation*}
L_S(x,y)\geq L_S(x^{[i]},y)-\|S\|_{\delta}, 
\end{equation*}
so that \eqref{first claim} follows. 

For the second claim, let us use an optimal alignment $\pi$ of $x$ and $y$ to construct an alignment $\pi^{[+]}$ of $x^{[+]}$ and $y$ by appending an aligned pair of letters $(\hat{x},\mathfrak{G})$, where 
$\mathfrak{G}$ denotes a gap. Then we have 
\begin{equation*}
L_S(x^{[+]},y)\geq S_{\pi^{[+]}}(x^{[+]},y)=S_{\pi}(x,y)+S(\hat{x},\mathfrak{G})\geq L_S(x,y)-\|S\|_{\infty}. 
\end{equation*}
Conversely, we can amend an optimal alignment $\tilde{\pi}^{[+]}$ of $x^{[+]}$ and $y$ to become a valid alignment $\tilde{\pi}$ of $x$ and $y$ by cropping the last pair of aligned letters, $(\hat{x},{\mathfrak a})$. We then have 
\begin{equation*}
L_S(x,y)\geq S_{\tilde{\pi}}(x,y)=S_{\tilde{\pi}^{[+]}}(x^{[+]},y)-S(\hat{x},{\mathfrak a})\geq L_S(x^{[+]},y)-\|S\|_{\infty},
\end{equation*}
thus establishing \eqref{second claim}. 
\end{proof}

\begin{lemma}\label{rateoflambdan}
The convergence of $\lambda_n(S)$ to $\lambda(S)$ is governed by the inequality 
\begin{equation}\label{difference}
\lambda_n(S)\leq\lambda(S)\leq\lambda_n(S)+ c_n\|S\|_{\delta}\frac{\sqrt{\ln n}}{\sqrt{n}}+\frac{2\|S\|_{\infty}}{n},\quad\forall\,n\in\N,
\end{equation}
where 
\begin{equation*}
c_n:=\sqrt{\frac{2\ln3+2\ln(n+2)}{\ln(n)}}.
\end{equation*}
\end{lemma}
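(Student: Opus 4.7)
The inequality $\lambda_n(S)\le\lambda(S)$ is immediate: \eqref{increasing} makes $\{\lambda_m(S)\}_m$ nondecreasing, and \eqref{chvatal} identifies its limit as $\lambda(S)=\sup_m\lambda_m(S)\ge\lambda_n(S)$.

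For the harder bound my plan is to prove the uniform estimate $\lambda_N(S)\le\lambda_n(S)+c_n\|S\|_\delta\sqrt{\ln n/n}+2\|S\|_\infty/n$ for every $N\ge n$ and then let $N\to\infty$ via \eqref{chvatal}. I view an optimal alignment of $X_1\dots X_N$ with $Y_1\dots Y_N$ as a monotone lattice path from $(0,0)$ to $(N,N)$ with right, up and diagonal edges, write $N=kn+r$ with $0\le r<n$, and cut the path at its crossings of the vertical lines $X=in$ for $i=1,\dots,k$. This produces $k$ sub-alignments of the blocks $X_{(i-1)n+1}\dots X_{in}$ with $Y_{b_{i-1}+1}\dots Y_{b_i}$ of random lengths $m_i:=b_i-b_{i-1}$, plus a short remainder of total score at most $2r\|S\|_\infty$ by \eqref{second claim} of Lemma~\ref{change}. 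Applying \eqref{second claim} iteratively to normalise each $Y$-sub-block to length $n$ gives
\[
L_N(S)\;\le\;\sum_{i=1}^k L^{\,i,\,b_{i-1}}(S)\;+\;\|S\|_\infty\sum_{i=1}^k\lvert m_i-n\rvert\;+\;2r\|S\|_\infty,
\]
where $L^{\,i,\,b}(S):=L_S\bigl(X_{(i-1)n+1}\dots X_{in},\;Y_{b+1}\dots Y_{b+n}\bigr)$ is identically distributed to $L_n(S)$.

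The bounded-differences estimate \eqref{first claim} of Lemma~\ref{change} combined with Azuma--Hoeffding (Lemma~\ref{Azuma}) then supplies the Gaussian tail $\Prob[L^{\,i,\,b}(S)\ge n\lambda_n(S)+t]\le\exp(-t^2/(cn\|S\|_\delta^2))$. A union bound over the relevant pairs $(i,b)$, whose effective count I plan to reduce to $O((n+2)N)$ by grouping the $Y$-starting positions into shift classes modulo $n+2$, together with the choice $t=\|S\|_\delta\sqrt{cn(2\ln 3+2\ln(n+2))}$, makes the event $\max_{i,b}L^{\,i,\,b}(S)\le n\lambda_n(S)+t$ hold with probability summable in $N$; Borel--Cantelli then makes this bound eventually deterministic, and taking expectations, dividing by $N$, and letting $N\to\infty$ produces the required inequality. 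The $\ln 3$ and $\ln(n+2)$ contributions in $c_n$ come precisely from the size of this union-bound family.

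The main obstacle will be the boundary-mismatch sum $\|S\|_\infty\sum_{i=1}^k\lvert m_i-n\rvert$, which a priori can be as large as $O(N)$ and would overwhelm the bound. I intend to resolve this by refining the cutting rule to place cuts at every $(n+2)$-th edge of the lattice path rather than at fixed $X$-coordinates: this constrains both the $X$- and $Y$-contributions of every block to the range $[0,n+2]$, and the identity $\sum_i(\alpha_i+\beta_i)=2N$ relating the per-block $X$- and $Y$-lengths then telescopes the unavoidable mismatches into an $O(\|S\|_\infty)$ boundary contribution per block that is amortised, after division by $N$, into exactly the residual $2\|S\|_\infty/n$ term of the stated inequality.
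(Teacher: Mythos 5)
The easy half ($\lambda_n(S)\le\lambda(S)$) is fine, and the overall plan — decompose the optimal alignment of length $N\gg n$ into blocks of $X$-length $n$, concentrate each block score via Lemma~\ref{change} plus Azuma--Hoeffding, union-bound, take expectations and let $N\to\infty$ — is the natural line of attack. But as written the argument does not close, for two reasons.

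First, the union bound is not summable in $N$ for the value of $t$ you need. You want a rate $\sqrt{\ln n/n}$, which forces $t=\|S\|_\delta\sqrt{cn\bigl(2\ln 3+2\ln(n+2)\bigr)}$, hence a per-event tail of order $(n+2)^{-2}$. Your family of pairs $(i,b)$ has size that you estimate as $O\bigl((n+2)N\bigr)$ — and the modular grouping does not shrink it, because within a residue class you still have $\Theta(N/(n+2))$ distinct $Y$-windows to control. The union bound therefore gives failure probability $\Theta\bigl(N/(n+2)\bigr)$, which diverges as $N\to\infty$; Borel--Cantelli does not apply, and neither does the expectation argument (the contribution of the bad event to $\expect[L_N(S)]$ is of order $N\|S\|_\infty\cdot N/(n+2)$). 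If you instead take $t$ large enough to beat a family of size $\Omega(N)$, you get a rate $\sqrt{\ln N/n}$, which blows up as $N\to\infty$. The constant $c_n$ in the statement, with $2\ln 3+2\ln(n+2)=\ln\bigl(9(n+2)^2\bigr)$, is a strong hint that the intended union-bound family has size $O\bigl((n+2)^2\bigr)$ — \emph{independent of $N$} — which requires a genuinely different decomposition (for instance a multi-scale / dyadic comparison $\lambda_{2m}$ versus $\lambda_m$ in the spirit of Alexander, or a construction whose $Y$-windows are indexed by a bounded set), not a brute-force enumeration of all cut positions.

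Second, the boundary-mismatch term $\|S\|_\infty\sum_i|m_i-n|$ is not actually tamed by cutting the path at every $(n+2)$-th edge. With $\alpha_i$ horizontal, $\beta_i$ vertical and $\gamma_i$ diagonal edges in block $i$, you have $\alpha_i+\beta_i+\gamma_i=n+2$, so the $X$- and $Y$-lengths $\alpha_i+\gamma_i$ and $\beta_i+\gamma_i$ each lie in $[0,n+2]$ but can both be far from $n$ simultaneously (take $\gamma_i=0$, $\alpha_i=\beta_i=(n+2)/2$). The identity $\sum_i(\text{$X$-length})=\sum_i(\text{$Y$-length})=N$ controls the \emph{signed} sum of deviations, but the padding cost is the sum of their \emph{absolute values}, and deviations of alternating sign are not cancelled; on the worst path this sum is still $\Theta(N)$. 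So the claimed amortisation to $2\|S\|_\infty/n$ after dividing by $N$ is not justified. (For reference, the paper does not give a self-contained proof of this lemma either; it cites the companion paper \cite{lcsMontecarlo}, so the burden of filling in these two steps is on you.)
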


Note that $c_n$ tends to $\sqrt{2}$ when $n\rightarrow\infty$, so that it effectively acts as a constant. 

\begin{proof} 
See \cite{lcsMontecarlo}. 
\end{proof}

\begin{lemma}[McDiarmid's Inequality \cite{mcDiarmid}]\label{Azuma} 
Let $Z_1,Z_1,\dots,Z_m$ be i.i.d.\ random variables that take values in a set $D$, and let $g:D^m\rightarrow\R$ be a function of $m$ variables with the property that 
\begin{equation*}
\max_{i=1,\dots,m}\sup_{z\in D^m, \hat{z}_i\in D}\left|g(z_1,\dots,z_m)-g(z_1,\dots,\hat{z}_i,\dots,z_m)\right|\leq C. 
\end{equation*}
Thus, changing a single argument of $g$ changes its image by less than a constant $C$. Then the following bounds hold, 
\begin{align*}
\prob\left[g(Z_1,\dots,Z_m)-\expect[g(Z_1,\dots,Z_m)]\geq \epsilon\times m\right]&\leq\exp\left\{-\frac{2\epsilon^2 m}{C^2}\right\},\\
\prob\left[\expect\left[g(Z_1,\dots,Z_m)\right]-g(Z_1,\dots,Z_m)\geq \epsilon\times m\right]&\leq\exp\left\{-\frac{2\epsilon^2 m}{C^2}\right\}.
\end{align*}
\end{lemma}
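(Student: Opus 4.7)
The plan is to follow the classical bounded-differences argument: construct the Doob martingale associated with $g(Z_1,\dots,Z_m)$, use the hypothesis to control the conditional range of its increments, and then combine Hoeffding's lemma with a Chernoff bound.

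First, I would introduce the filtration $\mathcal{F}_k := \sigma(Z_1,\dots,Z_k)$, with $\mathcal{F}_0$ trivial, and define the Doob martingale $M_k := \E[g(Z_1,\dots,Z_m)\mid\mathcal{F}_k]$, $k=0,1,\dots,m$, so that $M_0=\E[g(Z_1,\dots,Z_m)]$ and $M_m=g(Z_1,\dots,Z_m)$. By the i.i.d.\ assumption, $M_k = h_k(Z_1,\dots,Z_k)$ where
\begin{equation*}
h_k(z_1,\dots,z_k) := \E[g(z_1,\dots,z_k,Z_{k+1},\dots,Z_m)].
\end{equation*}

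Second, I would show that each increment $D_k := M_k-M_{k-1}$ has conditional range at most $C$ given $\mathcal{F}_{k-1}$. Writing $D_k = h_k(Z_1,\dots,Z_k) - \E[h_k(Z_1,\dots,Z_{k-1},Z_k')\mid\mathcal{F}_{k-1}]$ where $Z_k'$ is an independent copy of $Z_k$, the hypothesis propagates through the expectation over $(Z_{k+1},\dots,Z_m)$ to give
\begin{equation*}
|h_k(Z_1,\dots,Z_{k-1},z) - h_k(Z_1,\dots,Z_{k-1},z')| \leq C \qquad \forall z,z'\in D,
\end{equation*}
so conditionally on $\mathcal{F}_{k-1}$ the random variable $h_k(Z_1,\dots,Z_k)$ is supported on an interval of length at most $C$. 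By Hoeffding's lemma, any centred random variable with range in an interval of length $C$ has moment generating function bounded by $\exp(s^2C^2/8)$, hence
\begin{equation*}
\E[\exp(sD_k)\mid\mathcal{F}_{k-1}] \leq \exp(s^2 C^2/8), \qquad s\in\R.
\end{equation*}

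Third, iterating the tower property yields $\E[\exp(s(M_m-M_0))]\leq\exp(m s^2 C^2/8)$, and a Chernoff bound optimized at $s=4\epsilon/C^2$ gives
\begin{equation*}
\Prob[g(Z_1,\dots,Z_m)-\E g(Z_1,\dots,Z_m)\geq\epsilon m]\leq\exp\{-2\epsilon^2 m/C^2\},
\end{equation*}
which is the first claim. The second claim follows by applying the first to $-g$, which inherits the same bounded-differences constant $C$.

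The main subtlety is at the conditional-range step: it is tempting to stop at the crude bound $|D_k|\leq C$, but doing so would only produce the weaker tail $\exp(-\epsilon^2 m/(2C^2))$. To obtain the stated factor, Hoeffding's lemma must be applied to the \emph{length} (at most $C$) of the conditional support of $h_k(Z_1,\dots,Z_k)$, not to a pointwise bound on $|D_k|$. Everything else is routine.
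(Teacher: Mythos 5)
Your proof is correct, and it is the standard Doob-martingale (bounded differences) argument that underlies the result in the reference that the paper cites; the paper itself offers no proof beyond the citation to McDiarmid. You have also correctly identified the one genuine subtlety: applying Hoeffding's lemma to the conditional \emph{range} of $h_k(Z_1,\dots,Z_{k-1},\cdot)$, which is at most $C$, rather than to the crude pointwise bound $|D_k|\leq C$, is precisely what produces the sharp constant $2$ in the exponent instead of the weaker Azuma-style $1/2$.
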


\begin{proof}
A consequence of the Azuma-Hoeffding Inequality, see \cite{mcDiarmid}. 
\end{proof}

\begin{theorem}\label{Azuma etc}
For fixed $\epsilon>0$ and scoring function $S$ there exists $K>0$ and $n_{\epsilon}\in\N$ such that 
\begin{align}
\prob\left[\frac{L_n(S)}{n}\geq\lambda(S)+\epsilon\right]&\leq\e^{-Kn},\quad\forall\,n\in\N,\label{cor1}\\
\prob\left[\frac{L_n(S)}{n}\leq\lambda_n(S)-\epsilon\right]&\leq\e^{-Kn},\quad\forall\,n\in\N,\label{cor2}\\
\prob\left[\frac{L_n(S)}{n}\leq\lambda(S)-\epsilon\right]&\leq\e^{-Kn},\quad\forall\,n\gg n_{\epsilon}.\label{cor3}
\end{align}
\end{theorem}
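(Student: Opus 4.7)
The plan is to realize $L_n(S)$ as a function $g(X_1,\dots,X_n,Y_1,\dots,Y_n)$ of the $2n$ i.i.d.\ letters in the two random strings and to feed this function into McDiarmid's Inequality (Lemma~\ref{Azuma}). The bounded-differences condition is exactly the content of Lemma~\ref{change}: flipping a single entry of $X$ or $Y$ to any other letter of $\mathcal{A}$ changes $L_n(S)$ by at most $\|S\|_\delta$, so the McDiarmid constant can be taken to be $C=\|S\|_\delta$. Applying McDiarmid with $m=2n$ therefore yields, for every $\eta>0$,
\begin{equation*}
\prob\left[\bigl|L_n(S)-\expect[L_n(S)]\bigr|\geq \eta n\right]\leq 2\exp\!\left(-\frac{\eta^2 n}{\|S\|_\delta^2}\right).
\end{equation*}
The lower-tail half of this bound is exactly \eqref{cor2} with $K=\epsilon^2/\|S\|_\delta^2$.

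For \eqref{cor1} I would combine the upper-tail half of the above bound with the monotonicity \eqref{increasing}: since $\lambda_n(S)\leq\lambda(S)$, the event $\{L_n(S)/n\geq\lambda(S)+\epsilon\}$ is contained in $\{L_n(S)/n\geq\lambda_n(S)+\epsilon\}$, so the same McDiarmid estimate (with $\eta=\epsilon$) gives \eqref{cor1}. Note the direction of the monotonicity is crucial here---it goes the right way to turn a concentration bound around $\lambda_n(S)$ into a one-sided bound around the (larger) limit $\lambda(S)$.

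The final bound \eqref{cor3} is the one that requires a little more: this time monotonicity goes the wrong way, so I would use the quantitative rate from Lemma~\ref{rateoflambdan}. That lemma shows $\lambda(S)-\lambda_n(S)=O(\sqrt{\ln n/n})\to 0$, so there is an $n_\epsilon$ such that for all $n\geq n_\epsilon$,
\begin{equation*}
\lambda(S)-\lambda_n(S)\leq \tfrac{\epsilon}{2}.
\end{equation*}
For such $n$ the event $\{L_n(S)/n\leq \lambda(S)-\epsilon\}$ is contained in $\{L_n(S)/n\leq \lambda_n(S)-\epsilon/2\}$, and \eqref{cor2} applied with $\epsilon/2$ in place of $\epsilon$ gives the conclusion with $K=\epsilon^2/(4\|S\|_\delta^2)$.

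None of these steps is really the main obstacle: the hard work is in the already-established Lemmas~\ref{change}, \ref{rateoflambdan} and~\ref{Azuma}. The only mildly subtle point is keeping track of which inequality in \eqref{increasing}--\eqref{chvatal} is used where, so that the monotonicity $\lambda_n\leq\lambda$ is exploited in the direction that gives a free bound for \eqref{cor1} while \eqref{cor3} is handled by absorbing the $O(\sqrt{\ln n/n})$ gap into a constant fraction of $\epsilon$ once $n$ is large enough.
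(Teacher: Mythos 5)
Your proof is correct and follows essentially the same route as the paper: Lemma~\ref{change} supplies the bounded-differences constant $\|S\|_\delta$, McDiarmid's Inequality (Lemma~\ref{Azuma}) with $m=2n$ yields the concentration around $\lambda_n(S)$, the monotonicity $\lambda_n(S)\leq\lambda(S)$ handles \eqref{cor1} for free, and the gap $\lambda(S)-\lambda_n(S)\to 0$ from Lemma~\ref{rateoflambdan} is absorbed into $\epsilon/2$ to obtain \eqref{cor3} for $n\geq n_\epsilon$. The only cosmetic difference is that the paper carries the explicit $c_n\|S\|_\delta\sqrt{\ln n/n}+2\|S\|_\infty/n$ term rather than saying immediately that it is eventually below $\epsilon/2$, but the substance is identical.
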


\begin{proof}
We know from Lemma \ref{change} that 
\begin{equation*}
g(X_1,\dots,X_n,Y_1,\dots,Y_n)=S(X_1\dots X_n, Y_1\dots Y_n)=L_n(S)
\end{equation*}
satisfies the assumptions of Lemma \ref{Azuma} with $m=2n$ and $C=\|S\|_{\delta}$. McDiarmid's Inequality therefore shows 
\begin{align}
\prob\left[\frac{L_n(S)}{n}\geq\lambda_n(S)+\epsilon\right]&=
\prob\left[L_n(S)\geq\expect\left[L_n(S)\right]+\frac{\epsilon}{2}\times 2n\right]\nonumber\\
&\leq\exp\left\{-\frac{\epsilon^2}{\|S\|_{\delta}^2}\times n\right\},\label{thu1}
\end{align}
and similarly, 
\begin{equation}\label{thu2}
\prob\left[\frac{L_n(S)}{n}\leq\lambda_n(S)-\epsilon\right]\leq\exp\left\{-\frac{\epsilon^2}{\|S\|_{\delta}^2}\times n\right\}.
\end{equation}
Claim \eqref{cor2} therefore holds with $K=\frac{\epsilon^2}{4\|S\|_{\delta}^2}$.

Furthermore, Lemma \ref{rateoflambdan} established that 
\begin{equation}\label{difference2}
\lambda_n(S)\leq\lambda(S)\leq\lambda_n(S)+ c_n\|S\|_{\delta}\frac{\sqrt{\ln n}}{\sqrt{n}}+\frac{2\|S\|_{\infty}}{n},\quad\forall\,n\in\N,
\end{equation}
holds, where $c_n=\sqrt{2\ln3+2\ln(n+2)}/\sqrt{\ln(n)}$.  Using the first inequality from \eqref{difference2} in conjunction with \eqref{thu1}, we find 
\begin{equation*}
\prob\left[\frac{L_n(S)}{n}\geq\lambda(S)+\epsilon\right]\leq\prob\left[\frac{L_n(S)}{n}\geq\lambda_n(S)+\epsilon\right]\leq\exp\left\{-\frac{\epsilon^2}{\|S\|_{\delta}^2}\times n\right\},
\end{equation*}
which shows that Claim \eqref{cor1} holds with $K=\frac{\epsilon^2}{4\|S\|_{\delta}^2}$.

Using now the second inequality from \eqref{difference2} in conjunction with \eqref{thu2}, we find 
\begin{align*}
\prob\left[\frac{L_n(S)}{n}\leq\lambda(S)-\epsilon\right]
&\leq\prob\left[\frac{L_n(S)}{n}\leq\lambda_n(S)-\left(\epsilon-c_n\|S\|_{\delta}\frac{\sqrt{\ln n}}{\sqrt{n}}-\frac{2\|S\|_{\infty}}{n}\right)\right]\\
&\leq\exp\left\{-\frac{\left(\epsilon-c_n\|S\|_{\delta}\frac{\sqrt{\ln n}}{\sqrt{n}}-\frac{2\|S\|_{\infty}}{n}\right)^2}{\|S\|_{\delta}^2}\times n\right\}\\
&\leq\exp\left\{-\frac{\epsilon^2}{4\|S\|_{\delta}^2}\times n\right\},\quad\forall\,n\geq n_{\epsilon},
\end{align*}
where $n_{\epsilon}\in\N$ is chosen large enough to satisfy
\begin{equation*}
\epsilon-c_n\|S\|_{\delta}\frac{\sqrt{\ln n}}{\sqrt{n}}-\frac{2\|S\|_{\infty}}{n}>\frac{\epsilon}{2},\quad\forall\,n\geq n_{\epsilon}.
\end{equation*}
This shows that \eqref{cor3} holds for $K=\frac{\epsilon^2}{4\|S\|_{\delta}^2}$.
\end{proof}

\section{Acknowledgements} 

The authors wish to thank the Engineering and Physical Sciences Research Council (EPSRC) and the Institute of Mathematics and its Applications (IMA) and Pembroke College Oxford for generous financial support, Endre S\"uli for supporting their small grant application to the IMA, Pembroke College for granting a College Associateship to Heinrich Matzinger, and the Oxford Mathematical Institute for hosting him during his Oxford 
visit. 

\bibliographystyle{plain}
\bibliography{bio12}

\begin{thebibliography}{10}

\bibitem{Alexander}
Kenneth~S. Alexander.
\newblock The rate of convergence of the mean length of the longest common
  subsequence.
\newblock {\em Ann.\ Appl.\ Probab.}, 4(4):1074--1082, 1994.

\bibitem{lcsMontecarlo}
S.~Amsalu, R.~Hauser, and H.~Matzinger.
\newblock A {M}onte {C}arlo approach to the fluctuation problem in optimal
  alignments of random strings.
\newblock {\em University of {O}xford, {N}umerical {A}nalysis {G}roup
  {R}esearch {R}eport {NA}-12-16}.

\bibitem{bertsekas}
D.~Bertsekas and H.~Yu.
\newblock A unifying polyhedral approximation framework for convex
  optimization.
\newblock {\em SIAM J. Optim.}, 21.

\bibitem{boutet}
J.~Boutet~de Monvel.
\newblock Extensive simulations for longest common subsequences.
\newblock {\em Eur. {P}hys. {J}. {B}}, 7:293--308, 1999.

\bibitem{caratheodory}
C.~Carath\'eodory.
\newblock \"{U}ber den {V}ariabilit\"atsbereich der {F}ourierschen {K}onstanten
  von positiven harmonischen {F}unktionen.
\newblock {\em Rendiconti del Circolo Matematico di Palermo}, 32:193--217,
  1911.

\bibitem{Sankoff1}
V.~Chvatal and D.~Sankoff.
\newblock Longest common subsequences of two random sequences.
\newblock {\em J.\ Appl.\ Probability}, 12:306--315, 1975.

\bibitem{hausdorff}
Felix Hausdorff.
\newblock {\em Grundzuege der {M}engenlehre}.
\newblock Veit \& Co., 1914.

\bibitem{martinezlcs}
R.~Hauser, S.~Martinez, and H.~Matzinger.
\newblock Large deviation based upper bounds for the lcs-problem.
\newblock {\em Advances in Applied Probability}, 38:827--852, 2006.

\bibitem{lcscurve}
R.~Hauser, H.~Matzinger, and C.~Durringer.
\newblock Approximation to the mean curve in the lcs problem.
\newblock {\em Stochastic Processes and their Applications}, 118(1):629--648,
  2008.

\bibitem{caseStudy}
E.~Hirmo, J.~Lember, and H.~Matzinger.
\newblock Detecting the homology of {DNA}-sequences based on the variety of
  optimal alignments: a case study.
\newblock {\em ar{X}iv:1210.3771 [stat.{AP}]}, 2012.

\bibitem{jerrum}
M.~Jerrum.
\newblock Lectures in {M}athematics {ETH} {Z}\"urich.
\newblock In {\em Counting, Sampling and Integrating: {A}lgorithm and
  Complexity}, pages 62--92. ETHZ, Z\"urich, 2003.

\bibitem{howard}
H.~Kesten, editor.
\newblock volume 110 of {\em {E}ncyclopaedia of {M}athematical {S}ciences},
  Berlin, 2004. Springer-Verlag.

\bibitem{krein}
M.~Krein and D.~Milman.
\newblock On extreme points of regular convex sets.
\newblock {\em Studia Mathematica}, 9:133--138, 1940.

\bibitem{VARTheta}
Jyri Lember and Heinrich Matzinger.
\newblock Standard deviation of the longest common subsequence.
\newblock {\em Annals of Probability}, 37(3):1192--1235, 2009.

\bibitem{mcDiarmid}
C.~McDiarmid.
\newblock On the method of bounded differences.
\newblock {\em Surveys in Combinatorics}, 141:148--188, 1989.

\bibitem{milman}
D.~Milman.
\newblock Characteristics of extremal points of regular convex sets
  ({R}ussian).
\newblock {\em Doklady Akademii Nauk SSSR}, 57:119--122, 1947.

\bibitem{minkowski}
Hermann Minkowski.
\newblock {\em Geometrie der {Z}ahlen}.
\newblock Teubner, 1910.

\bibitem{Rademacher}
H.~Rademacher.
\newblock \"{U}ber partielle und totale {D}ifferenzierbarkeit von {F}unktionen
  mehrerer {V}ariablen und \"uber die {T}ransformation der {D}oppelintegrale.
\newblock {\em Math. Ann.}, 79:340--359, 1919.

\bibitem{Steele86}
Michael~J. Steele.
\newblock An {E}fron- {S}tein inequality for non-symmetric statistics.
\newblock {\em Annals of Statistics}, 14:753--758, 1986.

\bibitem{steinitz}
E.~Steinitz.
\newblock Bedingt konvergente {R}eihen und konvexe {S}ysteme.
\newblock {\em J.\ Reine Angew.\ Math.}, 143:128--175, 1913.

\bibitem{Waterman-estimation}
Michael~S. Waterman.
\newblock Estimating statistical significance of sequence alignments.
\newblock {\em Phil.\ Trans.\ R.\ Soc.\ Lond.\ B}, 344:383--390, 1994.

\end{thebibliography}

\end{document}